\definecolor{ForestGreen}{HTML}{009B55}
\definecolor{NavyBlue}{HTML}{006EB8}
\definecolor{halfgray}{gray}{0.55}
\definecolor{webgreen}{rgb}{0,.5,0}
\definecolor{webbrown}{rgb}{.6,0,0}
\definecolor{dartmouthgreen}{rgb}{0.05, 0.5, 0.06}
\newcommand{\ie}{i.e.,~}
\newcommand{\N}{\mathbb N}
\newcommand{\R}{\mathbb R}
\renewcommand{\b}{\beta}
\newcommand{\h}{\eta}
\newcommand{\z}{\zeta}
\renewcommand{\o}{\omega}
\newcommand{\s}{\sigma}
\newcommand{\g}{\gamma}
\renewcommand{\L}{\Lambda}
\newcommand{\cC}{\mathcal{C}}
\newcommand{\cG}{\mathcal{G}}
\newcommand{\cV}{\mathcal{V}}
\newcommand{\cX}{\mathcal{X}}
\newcommand{\cO}{\mathcal{O}}
\newcommand{\cR}{\mathcal{R}}
\newcommand{\cS}{\mathcal{S}}
\newcommand{\Ve}{V_\ee}
\newcommand{\Vo}{V_\oo}
\newcommand{\xtbb}{\{X_{t}^\b\}_{t \in \N}}
\renewcommand{\ss}{\cX^s}
\newcommand{\ee}{\mathrm{\mathbf{e}}}
\newcommand{\oo}{\mathrm{\mathbf{o}}}
\newcommand{\pap}{\partial^+}
\newcommand{\D}{\Delta}
\newtheorem{thm}{Theorem}[section]
\newtheorem{cor}[thm]{Corollary}
\newtheorem{lem}[thm]{Lemma}
\newtheorem{prop}[thm]{Proposition}
\newtheorem{defi}[thm]{Definition}
\definecolor{cssgreen}{rgb}{0.0, 0.5, 0.0}
\title{Critical configurations of the hard-core model\\on square grid graphs}
\author[1,2]{Simone Baldassarri}
\affil[1]{Universit\`{a} degli Studi di Firenze, Firenze, Italy}
\affil[2]{Aix-Marseille Université, Marseille, France}
\author[3]{Vanessa Jacquier}
\affil[3]{University of Utrecht, Utrecht, The Netherlands}
\author[4]{Alessandro Zocca}
\affil[4]{Vrije Universiteit Amsterdam, The Netherlands}
\date{\today}
\date{\today}
\begin{document}
\maketitle

\begin{abstract}
We consider the hard-core model on a finite square grid graph with stochastic Glauber dynamics parametrized by the inverse temperature $\beta$. We investigate how the transition between its two maximum-occupancy configurations takes place in the low-temperature regime $\beta\to\infty$ in the case of periodic boundary conditions. The hard-core constraints and the grid symmetry make the structure of the critical configurations, also known as essential saddles, for this transition very rich and complex. We provide a comprehensive geometrical characterization of the set of critical configurations that are asymptotically visited with probability one. In particular, we develop a novel isoperimetric inequality for hard-core configurations with a fixed number of particles and we show how not only their size but also their shape determines the characterization of the saddles.
\end{abstract}

\medskip
\noindent
\textit{MSC Classification:} 82C20; 60J10; 60K35.

\medskip
\noindent
\textit{Keywords:} Hard-core model; Metastability; Tunneling; Critical configurations.

\medskip
\noindent
\textit{Acknowledgements:} S.B and V.J.~are grateful for the support of ``Gruppo Nazionale per l'Analisi Matematica, la Probabilità e le loro Applicazioni" (GNAMPA-INdAM).
The authors are grateful to Francesca Nardi, Julien Sohier, Gianmarco Bet, and Tommaso Monni for useful and fruitful discussions at the early stage of this work.

\section{Introduction} 
We consider a stochastic model, known in the literature as \textit{hard-core lattice gas model} \cite{gaunt1965hard,van1994percolation}, where particles have a non-negligible radius and therefore cannot overlap. Assuming a finite volume, the hard-core constraints are modeled with a finite undirected graph $\Lambda$. More specifically, particles can reside on the sites of $\Lambda$ and edges connect the pairs of sites in $\Lambda$ that cannot be simultaneously occupied. In other words, any hard-core configuration is an independent set of $\Lambda$. In this paper, we take $\Lambda$ to be a square grid graph with periodic boundary conditions. The resulting hard-core particle configurations are then those whose occupied sites have all the corresponding four neighboring sites empty, see~\cref{fig:configurazione} for an example of such configurations. 

This interacting particle system evolves according to a stochastic dynamics which is fully characterized by the \textit{Hamiltonian} or \textit{energy function} in \eqref{eq:energyhardcore} and is parametrized by the inverse temperature $\beta$. In particular, the appearance and disappearance of particles are modeled via a Glauber-type update Markov chain $\{X_t\}_{t\in\mathbb{N}}$ with Metropolis transition probabilities induced by the Hamiltonian, see \eqref{eq:metropolistransitionprobabilities} later for more details. The stochastic process is reversible with respect to the corresponding Gibbs measure, cf.~\eqref{eq:gibbs}, which is then its equilibrium distribution. 

In the low-temperature regime (i.e., $\beta \to \infty$), the most likely states for this interacting particle system, which we refer to as \textit{stable states}, are those with a maximum number of particles, namely. On the square grid graph $\Lambda$ of even length, there are two stable states, corresponding to the two chessboard-like patterns. When $\beta$ grows large, it takes the system a very long time to move from one stable state to the other since such a transition involves visiting intermediate configurations which are very unlikely. Such transitions become thus rare events and this is a central issue in the framework of metastability for interacting particle systems, which represents a thriving area in mathematical physics that is full of challenges. As a consequence, the stochastic process takes also a very long time to converge to stationarity, exhibiting so-called \textit{slow/torpid mixing} \cite{Galvin2008,Zocca2015}.

The asymptotic behavior of the first hitting times between the maximum-occupancy configurations of this model in the low-temperature regime has been studied in \cite{NZB15}. In particular, the authors showed how the order-of-magnitude of this first hitting time depends on the grid sizes and on the boundary conditions by means of an extension of the setting in \cite{Manzo2004}. Instead of leveraging directly the general strategy proposed in \cite{Manzo2004}, which allows us to derive the asymptotic behavior of the transition time together with a characterization of the critical configurations, the authors of \cite{NZB15} adopted a novel combinatorial method to estimate the energy barrier between the two stable states of the model, which is disentangled with respect to the description of the critical droplets. 

The main motivation of the present paper is to fill this knowledge gap. Indeed, the geometrical characterization of the \textit{essential gates} is a relevant goal both from a probabilistic and a physical point of view since it provides insightful details of the dynamical behavior of the system. This represents a crucial point in describing the typical trajectories, namely, those typically followed by the system during the transition from a stable state to the other. We remark that in several models analyzed in the context of Freidlin-Wentzell Markov chains evolving under Glauber dynamics, the essential gate was unique \cite{Apollonio2022} but, in general, there may exist many minimal sets that are crossed with high probability during the phase transition, either distinct or overlapping (see e.g. \cite{Baldassarri2022weak,Baldassarri2022strong} for this description in the case of the conservative Kawasaki dynamics). Interestingly, this is what happens also for our model despite it evolves under the non-conservative Glauber dynamics. Such a peculiar feature rests on the hard-core constraints and on the specific symmetry of the system, i.e., we are analyzing the tunneling transition between two stable states. Indeed, the fact that particles cannot appear in any site and the starting and target configurations have the same energy forces the system to visit many critical configurations before reaching the cycle of the target stable state. This is indeed also what happens for the Ising and Potts model evolving with the Glauber dynamics when there is no external magnetic field (see \cite{Bet2021} for instance), while when the symmetry of the system is broken, namely, an external magnetic field is present, the situation drastically changes. This different behavior has a major impact on the geometrical structure of the essential gates, which indeed turns out to be much richer than in the other cases and deserves a careful and detailed analysis.

In order to geometrically characterize the critical configurations, with each cluster of particles we associate its contour, that is a union of edges on the dual graph of $\Lambda$. To this end, we provide some results concerning the model-dependent isoperimetric inequality. In particular, we show that for a fixed area the unique clusters that minimize the perimeter have a \textit{rhomboidal} shape. However, the energy landscape is much more complex as the periodic boundary conditions give rise to other types of clusters with minimal perimeter for a given area, such as the configurations having a column containing a fixed number of particles.

In this paper, we adopt the framework of the \textit{pathwise approach}, introduced in \cite{Cassandro1984}, later developed in \cite{Olivieri1995,Olivieri1996}, and summarized in the monograph \cite{Olivieri2005}. A modern version of this approach can be found in \cite{NZB15,Cirillo2013,Cirillo2015,Manzo2004}. The pathwise approach has been widely adopted to the low-temperature behavior of finite-volume models with single-spin-flip Glauber dynamics, e.g. \cite{Apollonio2022,baldassarri2023ising,https://doi.org/10.48550/arxiv.2208.11869,https://doi.org/10.48550/arxiv.2108.04011,Bet2021,Betnegative,cirillo2023homogeneous,Nardi2019,Zocca2018,Zocca2018bis}, with Kawasaki dynamics, e.g. \cite{baldassarri2023metastability,Baldassarri2021,Baldassarri2022weak,Baldassarri2022strong,denHollander2000,Nardi2005}, and with parallel dynamics, e.g. \cite{Cirillo2008,Cirillo2008bis,Cirillo2022}. The more involved infinite-volume limit at low temperature was studied via this approach in \cite{denHollander2000,Gaudilliere2009,baldassarri2023droplet}.
Another method to study the metastability is the so-called \textit{potential-theoretic approach}, initiated in \cite{Bovier2002} and later summarized in the monograph \cite{Bovier2015} (see for instance \cite{Bovier2005,Bovier2010,Nardi2012} for the application of this approach to specific models both in finite and infinite volume). Since these two approaches rely on different definitions of metastable states, they are not completely equivalent. The situation is particularly delicate for infinite-volume systems, irreversible systems, and degenerate systems, as discussed in \cite{Cirillo2013,Cirillo2015,Bet2021PCA}. More recent approaches are developed in \cite{Beltrn2010,Beltrn2014,Bianchi2016,Bianchi2020,landim2021resolvent,kim2022approximation}. \medskip

The paper is organized as follows. In \cref{sec:model}, we provide a detailed model description and state our main result regarding the geometric features of the critical configurations, \cref{thm:saddles}. The rest of the paper is then devoted to the proof of this result. First, \cref{sec:defaux} provides some preliminary definitions and auxiliary results and then finally the proof of the main theorem is given in \cref{sec:proofthm}. For the sake of clarity, the proofs of some auxiliary lemmas are deferred to a later section, namely \cref{sec:auxproofs}. Lastly, \cref{sec:future} concludes the paper and outlines some future work.

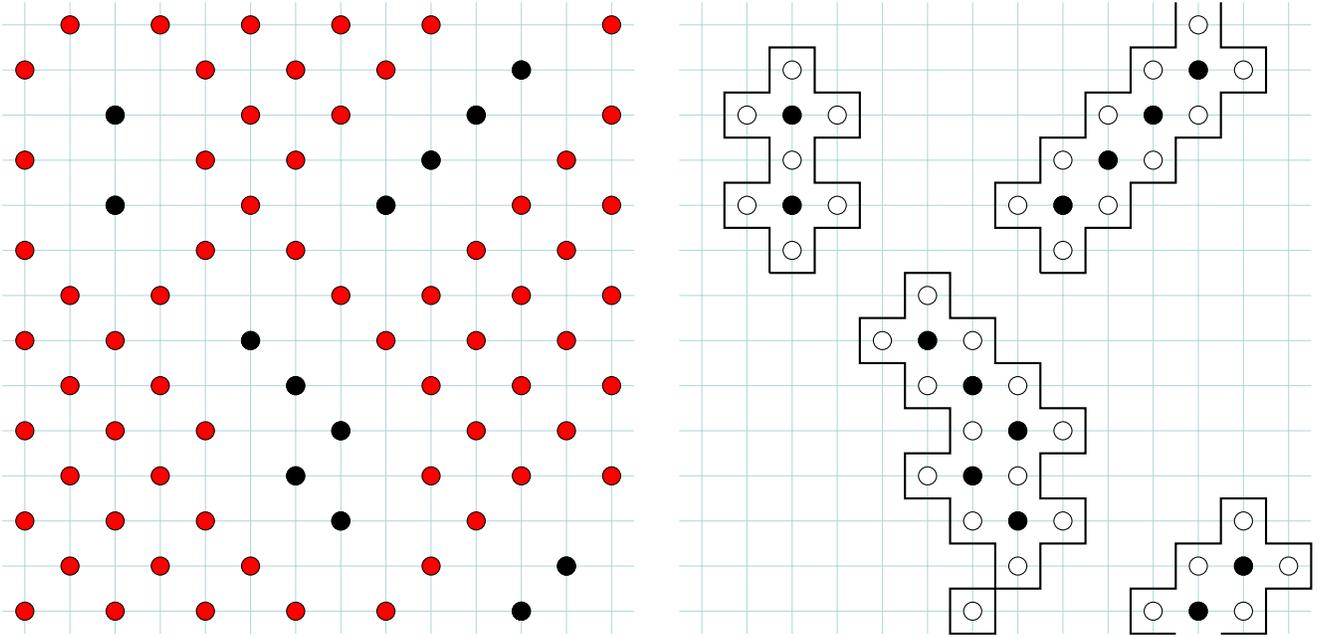
\begin{figure}
\centering
\begin{tikzpicture}[scale=0.6]
\newcommand\radius{0.2}
    \def\trasl{15}
    \newcommand\listodd{(3,-1),(2,0),(3,1),(2,2),(1,3),(4,6),(5,7),(6,8),(-2,6),(-2,8),(7,9),(7,-3),(8,-2), 
    (3+\trasl,-1),(2+\trasl,0),(3+\trasl,1),(2+\trasl,2),(1+\trasl,3),(4+\trasl,6),(5+\trasl,7),(6+\trasl,8),(-2+\trasl,6),(-2+\trasl,8),(7+\trasl,9),(7+\trasl,-3),(8+\trasl,-2)}
    \newcommand\listeven{(2+\trasl,-3),(3+\trasl,-2),(4+\trasl,-1),(3+\trasl,0),(4+\trasl,1),(3+\trasl,2),(2+\trasl,3),(1+\trasl,4),(0+\trasl,3),(1+\trasl,2),(2+\trasl,1),(1+\trasl,0),(2+\trasl,-1),(4+\trasl,5),(5+\trasl,6),(6+\trasl,7),(7+\trasl,8),(6+\trasl,9),(5+\trasl,8),(4+\trasl,7),(3+\trasl,6),(-2+\trasl,5),(-1+\trasl,6),(-2+\trasl,7),(-1+\trasl,8),(-2+\trasl,9),(-3+\trasl,8),(-3+\trasl,6),(8+\trasl,-1),(9+\trasl,-2),(7+\trasl,-2),(8+\trasl,-3),(6+\trasl,-3),(7+\trasl,10),(8+\trasl,9)}
    \draw[help lines] (-4.5,-3.5) grid (9.5,10.5);
\newcommand\listred{(-4,-3), (-4,-1), (-4,1), (-4,3), (-4,5), (-4,7),(-4,9), (-3,10), (-3,4),(-3,2),(-3,0),(-3,-2),(-2,-3),(-2,-1), (-2,1), (-2,3), (-1,10), (-1,4),(-1,2), (-1,0),(-1,-2), (0,-3),(0,-1),(0,1),(0,5),(0,7),(0,9),(1,10), (1,8),(1,6),(1,-2), (2,-3),(2,5),(2,7),(2,9),(3,10),(3,8),(3,4),(4,-3),(4,3),(4,9),(5,10),(5,4),(5,2),(5,0),(5,-2),(6,-1),(6,1),(6,3),(6,5),(7,6),(7,4),(7,2),(7,0),(8,1),(8,3),(8,5),(8,7),(9,10),(9,8),(9,6),(9,4),(9,2),(9,0)};

    \foreach \c in \listodd
        \draw[fill=black] \c circle(\radius);
    \foreach \c in \listred
       \draw[fill=red] \c circle(\radius);

    \draw[help lines] (-4.5+\trasl,-3.5) grid (9.5+\trasl,10.5);

    \draw[thick] (1.5+\trasl,-3.5) rectangle (2.5+\trasl,-2.5);

    \draw[thick] (2.5+\trasl,-2.5)--(3.5+\trasl,-2.5)--(3.5+\trasl,-1.5)--(4.5+\trasl,-1.5)--(4.5+\trasl,-0.5)--(3.5+\trasl,-0.5)--(3.5+\trasl,0.5)--(4.5+\trasl,0.5)--(4.5+\trasl,1.5)--(3.5+\trasl,1.5)--(3.5+\trasl,2.5)--(2.5+\trasl,2.5)--(2.5+\trasl,3.5)--(1.5+\trasl,3.5)--(1.5+\trasl,4.5)--(0.5+\trasl,4.5)--(0.5+\trasl,3.5)--(-0.5+\trasl,3.5)--(-0.5+\trasl,2.5)--(0.5+\trasl,2.5)--(0.5+\trasl,1.5)--(1.5+\trasl,1.5)--(1.5+\trasl,0.5)--(0.5+\trasl,0.5)--(0.5+\trasl,-0.5)--(1.5+\trasl,-0.5)--(1.5+\trasl,-1.5)--(2.5+\trasl,-1.5)--(2.5+\trasl,-2.5);

    \draw[thick] (3.5+\trasl,4.5)--(4.5+\trasl,4.5)--(4.5+\trasl,5.5)--(5.5+\trasl,5.5)--(5.5+\trasl,6.5)--(6.5+\trasl,6.5)--(6.5+\trasl,7.5)--(7.5+\trasl,7.5)--(7.5+\trasl,8.5)--(8.5+\trasl,8.5)--(8.5+\trasl,9.5)--(7.5+\trasl,9.5)--(7.5+\trasl,10.5);
    
    \draw[thick] (6.5+\trasl,10.5)--(6.5+\trasl,9.5)--(5.5+\trasl,9.5)--(5.5+\trasl,8.5)--(4.5+\trasl,8.5)--(4.5+\trasl,7.5)--(3.5+\trasl,7.5)--(3.5+\trasl,6.5)--(2.5+\trasl,6.5)--(2.5+\trasl,5.5)--(3.5+\trasl,5.5)--(3.5+\trasl,4.5);

    \draw[thick] (6.5+\trasl,-3.5)--(5.5+\trasl,-3.5)--(5.5+\trasl,-2.5)--(6.5+\trasl,-2.5)--(6.5+\trasl,-1.5)--(7.5+\trasl,-1.5)--(7.5+\trasl,-0.5)--(8.5+\trasl,-0.5)--(8.5+\trasl,-1.5)--(9.5+\trasl,-1.5)--(9.5+\trasl,-2.5)--(8.5+\trasl,-2.5)--(8.5+\trasl,-3.5)--(7.5+\trasl,-3.5);

    \draw[thick] (-2.5+\trasl,4.5)--(-1.5+\trasl,4.5)--(-1.5+\trasl,5.5)--(-0.5+\trasl,5.5)--(-0.5+\trasl,6.5)--(-1.5+\trasl,6.5)--(-1.5+\trasl,7.5)--(-0.5+\trasl,7.5)--(-0.5+\trasl,8.5)--(-1.5+\trasl,8.5)--(-1.5+\trasl,9.5)--(-2.5+\trasl,9.5)--(-2.5+\trasl,8.5)--(-3.5+\trasl,8.5)--(-3.5+\trasl,7.5)--(-2.5+\trasl,7.5)--(-2.5+\trasl,6.5)--(-3.5+\trasl,6.5)--(-3.5+\trasl,5.5)--(-2.5+\trasl,5.5)--(-2.5+\trasl,4.5);
    
    \foreach \c in \listodd
        \draw[fill=black] \c circle(\radius);
   \foreach \c in \listeven
        \draw[fill=white] \c circle(\radius);
\end{tikzpicture}
\caption{Example of a hard-core configuration on the $14\times 14$ square grid with periodic boundary conditions. On the left, the occupied sites in $\Vo$ (resp.\ in $\Ve$) are highlighted in black (resp.\ in red). On the right, we depict the same configuration using a different visual convention, in which we highlight the odd clusters that the configuration has by drawing only the empty sites in $\Ve$ (in white), the occupied sites in $\Vo$ (in black), and a black line around each odd cluster representing its contour.}
\label{fig:configurazione}
\end{figure}

\section{Model description and main results}
\label{sec:model}
We consider the stochastic evolution of the hard-core model on finite two-dimensional square lattices. More precisely, given an integer $L \geq 2$ we consider the $L\times L$ square grid graph $\L=(V,E)$ with periodic boundary conditions, which we will refer to as $L \times L$ \textit{toric grid graph}. We denote by $E$ the edge set of the grid graph $\L$ and by $V$ the collection of its $N=L^2$ sites. We identify each site $v \in \L$ by its coordinates $(v_1, v_2)$, that is we take as set of sites $V:=\{0,\dots,L-1\} \times \{0,\dots,L-1\}$. In the rest of the paper, we will assume that $L$ is an even integer, which guarantees that $\L$ is a bipartite graph, and that $L\geq6$, to avoid pathological trivial cases.

A particle configuration on $\L$ is described by associating a variable $\s(v)\in \{0,1\}$ to each site $v \in \L$, indicating the absence ($0$) or the presence ($1$) of a particle on that site. Let $\cX \subset \{0,1\}^{N}$ be the collection of \textit{hard-core configurations on $\L$}, \ie
\begin{equation}
\label{eq:statespace}
    \cX:=\{ \s \in \{0,1\}^N ~|~ \s(v) \s(w)=0, \, \, \forall \, (v,w) \in E\},
\end{equation}
i.e., the particle configurations on $\L$ with no particles residing on neighboring sites. 

A site of $\L$ is called \textit{even} (respectively \textit{odd}) if the sum of its two coordinates is even (respectively odd) and we denote by $\Ve$ and $\Vo$ the collection of even sites and that of odd sites of $\L$. Clearly $|\Ve| =|\Vo| = L^2/2$. We denote by $\ee$ ($\oo$, respectively) the particle configuration on $\L$ with particles at each site in $\Ve$ ($\Vo$, respectively), \ie
\[
    \ee(v) :=
    \begin{cases}
        1 & \text{ if } v \in \Ve,\\
        0 & \text{ if } v \in \Vo,
    \end{cases}
    \quad \text{ and } \quad 
    \oo(v) :=
    \begin{cases}
        0 & \text{ if } v \in \Ve,\\
        1 & \text{ if } v \in \Vo.
    \end{cases}
\]
Both $\ee$ and $\oo$ are hard-core configurations thanks to the assumption that $L$ is even. 

\cref{fig:configurazione} shows an example of a hard-core configuration. Throughout the paper, all figures are drawn using the following conventions. They all depict hard-core configurations on a $14\times 14$ grid with periodic boundary conditions. The occupied (empty) sites in $\Vo$ ($\Ve$, respectively) are shown in black (white) and we draw a black line around each odd cluster representing its contour. We tacitly assume that all the even (odd) sites outside the odd region are occupied (empty, respectively) but they are not displayed to avoid cluttering the figures. See \cref{sec:oddcluster} for more precise definitions of odd clusters and odd regions.

Consider the Gibbs measure on $\cX$ given by
\begin{equation}
\label{eq:gibbs}
    \mu_\b(\s):= \frac{e^{-\b H(\s)}}{Z_{\b,\L}}, \qquad \sigma \in \cX,
\end{equation}
where $H$ is the Hamiltonian $H: \cX \to \R$ that is taken to be proportional to the number of present particles, namely
\begin{equation}
\label{eq:energyhardcore}
    H(\s) := - \sum_{v \in V} \s(v),
\end{equation}
with $Z_{\b,\L}:=\sum_{\s \in \cX} e^{-\b H(\s)}$ being the normalizing constant.
The two hard-core configurations on the $L \times L$ toric grid graph $\Lambda$ introduced above have energy equal to
\[
    H(\ee) = H(\oo) = - \frac{L^2}{2},
\]
which is the minimum value the Hamiltonian can take on $\cX$~\cite{NZB15}.

We assume the interacting particle system described evolves according to stochastic Glauber-type dynamics described by a single-step update Markov chain $\smash{\xtbb}$ on $\cX$ with transition probabilities between any pair of configurations $\s,\s' \in \cX$ given by
\begin{equation}
\label{eq:metropolistransitionprobabilities}
    P_\b(\s,\s'):=
    \begin{cases}
        q(\s,\s') e^{-\b [H(\s')-H(\s)]^+}, 	& \text{ if } \s \neq \s',\\
        1-\sum_{\h \neq \s} P_\b(\s,\h), 	& \text{ if } \s=\s',
    \end{cases}
\end{equation}
where $[\cdot]^+=\max\{\cdot,0\}$ and $q$ is the \textit{connectivity matrix} that allows only single-step updates, \ie for every $\s,\s' \in \cX$ we set
\begin{equation}
\label{eq:connectivityfunction}
q(\s,\s'):=
\begin{cases}
    \frac{1}{N},    & \text{if } \left |\{v \in V : \s(v)\neq \s'(v)\} \right |=1,\\
    0,  & \text{if } \left |\{v \in V : \s(v)\neq \s'(v)\} \right |>1.\\
    1 - \sum_{\h \neq \s} q(\s,\h), & \text{if } \s=\s'.
\end{cases}
\end{equation}
The resulting dynamics $P_\b$ is reversible with respect to the Gibbs measure $\mu_\b$ given in~\eqref{eq:gibbs}. One usually refers to the triplet $(\cX, H, q)$ as \textit{energy landscape} and to~\eqref{eq:metropolistransitionprobabilities} as \textit{Metropolis transition probabilities}.

The connectivity matrix $q$ given in~\eqref{eq:connectivityfunction} is irreducible, \ie for any pair of configurations $\s,\s'\in \cX$, $\s \neq \s'$, there exists a finite sequence $\o$ of configurations $\o_1,\dots,\o_n \in \cX$ such that $\o_1=\s$, $\o_n=\s'$ and $q(\o_i,\o_{i+1})>0$, for $i=1,\dots, n-1$. We will refer to such a sequence as a \textit{path} from $\s$ to $\s'$ and denote it by $\o: \s \to \s'$. Given a path $\o=(\o_1,\dots,\o_n)$, we define its \textit{height} $\Phi_\o$ as
\begin{equation}
\label{eq:pathheight}
    \Phi_\o:= \max_{i=1,\dots,n} H(\o_i).
\end{equation}
The \textit{communication energy} between two configurations $\s,\s' \in\cX$ is the minimum value that has to be reached by the energy in every path $\o: \s \to \s'$, \ie
\begin{equation}
\label{eq:ch}
    \Phi(\s,\s') := \min_{\o : \s \to \s'} \Phi_\o = \min_{\o : \s \to \s'} \max_{\h \in\o} H(\h). 
\end{equation}
 
Let $\ss \subset \cX$ denote the set of global minima of the Hamiltonian $H$ on $\cX$, to which we will refer to as \textit{stable states}. 
In~\cite{NZB15} it has been proved that for the hard-core model on a finite $L \times L$ square grid graph the following statements hold:
\begin{itemize}
    \item[(i)] There are exactly two stable states
\begin{equation}
\label{eq:stablestates}
    \ss =\{\ee,\oo\};
\end{equation}
\item[(ii)] The communication energy between the two stable states is equal to
\begin{equation}
\label{eq:cheeoo}
    \Phi(\ee,\oo) - H(\ee)=L+1;
\end{equation}
\item[(iii)] The corresponding energy landscape has no deep wells, \ie
\begin{equation}
\label{eq:maxsl}
    \max_{\s \in \cX} [\Phi(\s, \{\ee,\oo\}) - H(\s)] \leq L < \Phi(\ee,\oo) - H(\ee).
\end{equation}
\end{itemize}

\newpage
\subsection{Essential saddle characterization}
\label{sec:result}
Our results give insight into the way the transitions between $\ee$ and $\oo$ most likely occur in the low--temperature regime. This is usually described by identifying the optimal paths, saddles, and essential saddles that we define as follows. 
\begin{itemize}
    \item $ \cS (\ee, \oo)$ is the \textit{communication level set} between $\ee$ and $\oo$ defined by
    \[
         \cS (\ee, \oo) := \{ \s \in \cX ~|~ \exists \, \o \in (\ee \to \oo)_{\mathrm{opt}},  ~:~ \s\in \o \text{ and } H(\s) = \Phi_\o = \Phi(\ee,\oo)\},
    \]
    where $(\ee \to \oo)_{\mathrm{opt}}$ is the set of {\it optimal paths} from $\ee$ to $\oo$ realizing the minimax in $\Phi(\ee, \oo)$, \ie
    \[
         (\ee \to \oo)_{\mathrm{opt}} := \{ \o: \ee \to \oo ~|~ \Phi_\o = \Phi(\ee,\oo)\}.
    \]
    \item The configurations in $\cS(\ee,\oo)$ are called \textit{saddles}. Given an optimal path $\o \in  (\ee \to \oo)_{\mathrm{opt}}$, we define the set of its saddles $S(\o)$ as 
    $
        S(\o):=\{ \s \in \o ~|~ H(\s) = \Phi_\o = \Phi(\ee,\oo)\}.
    $
    A saddle $\s \in \cS(\ee,\oo)$ is called \textit{essential} if either
    \begin{itemize}
    \item[(i)] $\exists \, \o \in (\ee \to \oo)_{\mathrm{opt}}$ such that $S(\o) = \{\s\}$, or
    \item[(ii)] $\exists \, \o \in (\ee \to \oo)_{\mathrm{opt}}$ such that $\s \in S(\o)$ and
        $S(\o') \not \subseteq S(\o) \setminus \{\s\} \quad \forall \, \o' \in (\ee \to \oo)_{\mathrm{opt}}$. 
    \end{itemize}
    A saddle $\s\in\cS(\ee,\oo)$ that is not essential is called \textit{unessential saddle} or \textit{dead-end}, i.e., for any $\o \in (\ee \to \oo)_{\mathrm{opt}}$ such that $\o\cap\{\s\}\neq\emptyset$ we have that $S(\o)\setminus\{\s\}\neq\emptyset$ and there exists $\o' \in (\ee \to \oo)_{\mathrm{opt}}$ such that $S(\o')\subseteq S(\o)\setminus\{\s\}$.
    \item The \textit{essential gate} $\cG(\ee, \oo)\subset \cX$ is the collection of essential saddles for the transition $\ee \to \oo$. 
\end{itemize}
The aim of the present paper is to accurately identify the set $\cG(\ee,\oo)$ of the essential saddles for the transition from $\ee$ to $\oo$ for the Metropolis dynamics of the hard-core model on a $L \times L$ grid with periodic boundary conditions. 
The set $\cG(\ee,\oo)$ will be described as the union of six disjoint sets, each characterized by configurations with specific geometrical features. While we refer the reader to~\cref{sec:proofthm} for a precise definition of these sets (cf.~Definitions~\ref{def:essential*}--\ref{def:essentialnuove2}), we provide here some intuitive descriptions of the geometrical features of the configurations in these sets. We denote by 
\begin{itemize}
    \item $\cC_{ir}(\ee,\oo)$, $\cC_{gr}(\ee,\oo)$, and $\cC_{cr}(\ee,\oo)$ the collections of configurations with a unique cluster of particles in odd sites of rhomboidal shape with exactly two adjacent even empty sites as in~\cref{fig:selle1} and~\cref{fig:selle2} (left). Roughly speaking, $\cC_{ir}(\ee,\oo)$ contains the configurations with $(\frac{L}{2}-1)^2$ occupied odd particles and $L^2+2$ empty even sites; 
    $\cC_{gr}(\ee,\oo)$ (resp.\ $\cC_{cr}(\ee,\oo)$) contains the configurations obtained from $\cC_{ir}(\ee,\oo)$ (resp.\ $\cC_{gr}(\ee,\oo)$) by removing some occupied even sites attached to the rhombus and by growing along one (resp.\ the longest) side by adding some particles in the nearest odd sites of the rhombus.

    \item $\cC_{sb}(\ee,\oo)$, $\cC_{mb}(\ee,\oo)$, and $\cC_{ib}(\ee,\oo)$ the collections of configurations with a unique cluster of particles in odd sites with at most two additional empty even sites as in~\cref{fig:selle2} (right) and in~\cref{fig:selle3}. In particular, $\cC_{sb}(\ee,\oo)$ contains the configurations with $\frac{L}{2}-1$ particles arranged in an odd column with further two empty even sites;
    $\cC_{mb}(\ee,\oo)$ contains the configurations obtained from $\cC_{sb}(\ee,\oo)$ such that there is at least one column or row with $\frac{L}{2}$ particles arranged in odd sites.
    $\cC_{ib}(\ee,\oo)$ contains the configurations obtained from $\cC_{sb}(\ee,\oo)$ without having column or row with $\frac{L}{2}$ particles.
\end{itemize}

\begin{figure}[!h]
\centering
    \begin{tikzpicture}[scale=0.55]
    \newcommand\radius{0.2}
    \newcommand\listodd{(3,-1),(4,0),(5,1),(6,2),(7,3),(8,4),(2,0),(3,1),(4,2),(5,3),(6,4),(7,5),(1,1),(2,2),(3,3),(4,4),(5,5),(6,6),(0,2),(1,3),(2,4),(3,5),(4,6),(5,7),(-1,3),(0,4),(1,5),(2,6),(3,7),(4,8),(-2,4),(-1,5),(0,6),(1,7),(2,8),(3,9)}
    \newcommand\listeven{(3,-2),(4,-1),(5,0),(6,1),(7,2),(8,3),(9,4),(2,-1),(3,0),(4,1),(5,2),(6,3),(7,4),(8,5),(1,0),(2,1),(3,2),(4,3),(5,4),(6,5),(7,6),(0,1),(1,2),(2,3),(3,4),(4,5),(5,6),(6,7),(-1,2),(0,3),(1,4),(2,5),(3,6),(4,7),(5,8),(-2,3),(-1,4),(0,5),(1,6),(2,7),(3,8),(4,9),(-3,4),(-2,5),(-1,6),(0,7),(1,8),(2,9),(3,10),(-4,5),(-3,6)}
    
    \draw[help lines] (-4.5,-3.5) grid (9.5,10.5);

     \draw[thick] (2.5,-2.5)--(3.5,-2.5)--(3.5,-1.5)--(4.5,-1.5)--(4.5,-0.5)--(5.5,-0.5)--(5.5,0.5)--(6.5,0.5)--(6.5,1.5)--(7.5,1.5)--(7.5,2.5)--(8.5,2.5)--(8.5,3.5)--(9.5,3.5)--(9.5,4.5)--(8.5,4.5)--(8.5,5.5)--(7.5,5.5)--(7.5,6.5)--(6.5,6.5)--(6.5,7.5)--(5.5,7.5)--(5.5,8.5)--(4.5,8.5)--(4.5,9.5)--(3.5,9.5)--(3.5,10.5)--(2.5,10.5)--(2.5,9.5)--(1.5,9.5)--(1.5,8.5)--(0.5,8.5)--(0.5,7.5)--(-0.5,7.5)--(-0.5,6.5)--(-1.5,6.5)--(-1.5,5.5)--(-2.5,5.5)--(-2.5,4.5)--(-3.5,4.5)--(-3.5,3.5)--(-2.5,3.5)--(-2.5,2.5)--(-1.5,2.5)--(-1.5,1.5)--(-0.5,1.5)--(-0.5,0.5)--(0.5,0.5)--(0.5,-0.5)--(1.5,-0.5)--(1.5,-1.5)--(2.5,-1.5)--(2.5,-2.5);
     
     \draw[thick] (-2.5,5.5) rectangle (-3.5,6.5);
     \draw[thick] (-4.5,5.5) rectangle (-3.5,4.5);
   
    \foreach \c in \listodd
        \draw[fill=black] \c circle(\radius);
    \foreach \c in \listeven
        \draw[fill=white] \c circle(\radius);

\end{tikzpicture}
\hspace{1cm}
\begin{tikzpicture}[scale=0.55]
 \newcommand\radius{0.2}
    \newcommand\listodd{(3,-1),(4,0),(5,1),(6,2),(7,3),(8,4),(2,0),(3,1),(4,2),(5,3),(6,4),(7,5),(1,1),(2,2),(3,3),(4,4),(5,5),(6,6),(0,2),(1,3),(2,4),(3,5),(4,6),(5,7),(-1,3),(0,4),(1,5),(2,6),(3,7),(4,8),(-2,4),(-1,5),(0,6),(1,7),(2,8),(3,9),(-3,5),(-2,6),(-1,7)}
    \newcommand\listeven{(3,-2),(4,-1),(5,0),(6,1),(7,2),(8,3),(9,4),(2,-1),(3,0),(4,1),(5,2),(6,3),(7,4),(8,5),(1,0),(2,1),(3,2),(4,3),(5,4),(6,5),(7,6),(0,1),(1,2),(2,3),(3,4),(4,5),(5,6),(6,7),(-1,2),(0,3),(1,4),(2,5),(3,6),(4,7),(5,8),(-2,3),(-1,4),(0,5),(1,6),(2,7),(3,8),(4,9),(-3,4),(-2,5),(-1,6),(0,7),(1,8),(2,9),(3,10),(-4,5),(-3,6),(-2,7),(-1,8),(0,9)}
    
    \draw[help lines] (-4.5,-3.5) grid (9.5,10.5);
        
    \draw[thick] (2.5,-2.5)--(3.5,-2.5)--(3.5,-1.5)--(4.5,-1.5)--(4.5,-0.5)--(5.5,-0.5)--(5.5,0.5)--(6.5,0.5)--(6.5,1.5)--(7.5,1.5)--(7.5,2.5)--(8.5,2.5)--(8.5,3.5)--(9.5,3.5)--(9.5,4.5)--(8.5,4.5)--(8.5,5.5)--(7.5,5.5)--(7.5,6.5)--(6.5,6.5)--(6.5,7.5)--(5.5,7.5)--(5.5,8.5)--(4.5,8.5)--(4.5,9.5)--(3.5,9.5)--(3.5,10.5)--(2.5,10.5)--(2.5,9.5)--(1.5,9.5)--(1.5,8.5)--(0.5,8.5)--(0.5,7.5)--(-0.5,7.5)--(-0.5,8.5)--(-1.5,8.5)--(-1.5,7.5)--(-2.5,7.5)--(-2.5,6.5)--(-3.5,6.5)--(-3.5,5.5)--(-4.5,5.5)--(-4.5,4.5)--(-3.5,4.5)--(-3.5,3.5)--(-2.5,3.5)--(-2.5,2.5)--(-1.5,2.5)--(-1.5,1.5)--(-0.5,1.5)--(-0.5,0.5)--(0.5,0.5)--(0.5,-0.5)--(1.5,-0.5)--(1.5,-1.5)--(2.5,-1.5)--(2.5,-2.5);

    \draw[thick] (-0.5,8.5) rectangle (0.5,9.5);
        
    \foreach \c in \listodd
        \draw[fill=black] \c circle(\radius);
    \foreach \c in \listeven
        \draw[fill=white] \c circle(\radius);
\end{tikzpicture}
\caption{An example of a configuration in $\cC_{ir}(\ee,\oo)$ (on the left) and one in $\cC_{gr}(\ee,\oo)$ (on the right).}
\label{fig:selle1}
\end{figure}

\begin{figure}[h!]
\centering
\begin{tikzpicture}[scale=0.55]
 \newcommand\radius{0.2}
    \newcommand\listodd{(3,-1),(4,0),(5,1),(6,2),(7,3),(8,4),(2,0),(3,1),(4,2),(5,3),(6,4),(7,5),(1,1),(2,2),(3,3),(4,4),(5,5),(6,6),(0,2),(1,3),(2,4),(3,5),(4,6),(5,7),(-1,3),(0,4),(1,5),(2,6),(3,7),(4,8),(-2,4),(-1,5),(0,6),(1,7),(2,8),(3,9),(-3,5),(-2,6),(-1,7),(0,8),(1,9),(2,10),(2,-2),(1,-1),(0,0),(-1,1)}
    \newcommand\listeven{(3,-2),(4,-1),(5,0),(6,1),(7,2),(8,3),(9,4),(2,-1),(3,0),(4,1),(5,2),(6,3),(7,4),(8,5),(1,0),(2,1),(3,2),(4,3),(5,4),(6,5),(7,6),(0,1),(1,2),(2,3),(3,4),(4,5),(5,6),(6,7),(-1,2),(0,3),(1,4),(2,5),(3,6),(4,7),(5,8),(-2,3),(-1,4),(0,5),(1,6),(2,7),(3,8),(4,9),(-3,4),(-2,5),(-1,6),(0,7),(1,8),(2,9),(3,10),(-4,5),(-3,6),(-2,7),(-1,8),(0,9),(1,10),(2,-3),(1,-2),(0,-1),(-1,0),(-2,1),(-3,2)}
    
    \draw[help lines] (-4.5,-3.5) grid (9.5,10.5);
        
    \draw[thick] (2.5,-3.5)--(2.5,-2.5)--(3.5,-2.5)--(3.5,-1.5)--(4.5,-1.5)--(4.5,-0.5)--(5.5,-0.5)--(5.5,0.5)--(6.5,0.5)--(6.5,1.5)--(7.5,1.5)--(7.5,2.5)--(8.5,2.5)--(8.5,3.5)--(9.5,3.5)--(9.5,4.5)--(8.5,4.5)--(8.5,5.5)--(7.5,5.5)--(7.5,6.5)--(6.5,6.5)--(6.5,7.5)--(5.5,7.5)--(5.5,8.5)--(4.5,8.5)--(4.5,9.5)--(3.5,9.5)--(3.5,10.5)--(2.5,10.5);

    \draw[thick] (1.5,10.5)--(0.5,10.5)--(0.5,9.5)--(-0.5,9.5)--(-0.5,8.5)--(-1.5,8.5)--(-1.5,7.5)--(-2.5,7.5)--(-2.5,6.5)--(-3.5,6.5)--(-3.5,5.5)--(-4.5,5.5)--(-4.5,4.5)--(-3.5,4.5)--(-3.5,3.5)--(-2.5,3.5)--(-2.5,2.5)--(-1.5,2.5)--(-1.5,1.5)--(-2.5,1.5)--(-2.5,0.5)--(-1.5,0.5)--(-1.5,-0.5)--(-0.5,-0.5)--(-0.5,-1.5)--(0.5,-1.5)--(0.5,-2.5)--(1.5,-2.5)--(1.5,-3.5);

    \draw[thick] (-2.5,2.5) rectangle (-3.5,1.5);
        
    \foreach \c in \listodd
        \draw[fill=black] \c circle(\radius);
    \foreach \c in \listeven
        \draw[fill=white] \c circle(\radius);
\end{tikzpicture}
\hspace{1cm}
    \begin{tikzpicture}[scale=0.55]
    \newcommand\radius{0.2}
    \newcommand\listodd{(3,-3),(3,1),(3,3),(3,5),(3,7),(3,9)}
    \newcommand\listeven{(2,-3),(4,-3),(3,-2),(4,-1),(2,-1),(3,0),(4,1),(2,1),(3,2),(4,3),(2,3),(3,4),(4,5),(2,5),(3,6),(4,7),(2,7),(3,8),(4,9),(2,9),(3,10)}
    
    \draw[help lines] (-4.5,-3.5) grid (9.5,10.5);

    \draw[thick] (1.5,-3.5)--(2.5,-3.5);
    \draw[thick] (3.5,-1.5)--(2.5,-1.5);
    \draw[thick] (3.5,-0.5)--(2.5,-0.5);
    \draw[thick] (2.5,-2.5)--(1.5,-2.5)--(1.5,-3.5);
    \draw[thick] (3.5,-3.5)--(4.5,-3.5)--(4.5,-2.5)--(3.5,-2.5)--(3.5,-1.5)--(4.5,-1.5)--(4.5,-0.5)--(3.5,-0.5)--(3.5,0.5)--(4.5,0.5)--(4.5,1.5)--(3.5,1.5)--(3.5,2.5)--(4.5,2.5)--(4.5,3.5)--(3.5,3.5)--(3.5,4.5)--(4.5,4.5)--(4.5,5.5)--(3.5,5.5)--(3.5,6.5)--(4.5,6.5)--(4.5,7.5)--(3.5,7.5)--(3.5,8.5)--(4.5,8.5)--(4.5,9.5)--(3.5,9.5)--(3.5,10.5);
    \draw[thick] (2.5,10.5)--(2.5,9.5)--(1.5,9.5)--(1.5,8.5)--(2.5,8.5)--(2.5,7.5)--(1.5,7.5)--(1.5,6.5)--(2.5,6.5)--(2.5,5.5)--(1.5,5.5)--(1.5,4.5)--(2.5,4.5)--(2.5,3.5)--(1.5,3.5)--(1.5,2.5)--(2.5,2.5)--(2.5,1.5)--(1.5,1.5)--(1.5,0.5)--(2.5,0.5)--(2.5,-0.5)--(1.5,-0.5)--(1.5,-1.5)--(2.5,-1.5)--(2.5,-2.5);

    \draw[thick] (1.5,-1.5) rectangle (2.5,-0.5);
    \draw[thick] (3.5,-1.5) rectangle (4.5,-0 .5);

    \foreach \c in \listodd
        \draw[fill=black] \c circle(\radius);
    \foreach \c in \listeven
        \draw[fill=white] \c circle(\radius);
\end{tikzpicture}
\caption{An example of a configuration in $\cC_{cr}(\ee,\oo)$ (on the left) and one in $\cC_{sb}(\ee,\oo)$ (on the right).}
\label{fig:selle2}
\end{figure}

\begin{figure}[h!]
\centering
\begin{tikzpicture}[scale=0.55]
    \newcommand\radius{0.2}
    \newcommand\listodd{(3,-3),(3,-1),(3,1),(3,3),(3,5),(3,7),(3,9),(2,8),(2,6),(2,4),(2,0),(2,-2),(1,-1),(1,5),(1,7),(4,8),(4,6),(4,4),(4,2),(4,0),(5,1),(5,5),(5,7)}
    \newcommand\listeven{(2,-3),(4,-3),(3,-2),(4,-1),(2,-1),(3,0),(4,1),(2,1),(3,2),(4,3),(2,3),(3,4),(4,5),(2,5),(3,6),(4,7),(2,7),(3,8),(4,9),(2,9),(3,10),(1,8),(0,7),(-1,6),(1,6),(0,5),(1,4),(1,0),(0,-1),(1,-2),(5,8),(6,7),(5,6),(6,5),(5,4),(5,2),(6,1),(5,0)}

    \draw[thick] (3.5,-3.5)--(4.5,-3.5)--(4.5,-2.5)--(3.5,-2.5)--(3.5,-1.5)--(4.5,-1.5)--(4.5,-0.5)--(5.5,-0.5)--(5.5,0.5)--(6.5,0.5)--(6.5,1.5)--(5.5,1.5)--(5.5,2.5)--(4.5,2.5)--(4.5,3.5)--(5.5,3.5)--(5.5,4.5)--(6.5,4.5)--(6.5,5.5)--(5.5,5.5)--(5.5,6.5)--(6.5,6.5)--(6.5,7.5)--(5.5,7.5)--(5.5,8.5)--(4.5,8.5)--(4.5,9.5)--(3.5,9.5)--(3.5,10.5);

    \draw[thick] (2.5,10.5)--(2.5,9.5)--(1.5,9.5)--(1.5,8.5)--(0.5,8.5)--(0.5,7.5)--(-0.5,7.5)--(-0.5,6.5)--(0.5,6.5)--(0.5,5.5)--(-0.5,5.5)--(-0.5,4.5)--(0.5,4.5)--(0.5,3.5)--(1.5,3.5)--(1.5,2.5)--(2.5,2.5)--(2.5,1.5)--(1.5,1.5)--(1.5,0.5)--(0.5,0.5)--(0.5,-0.5)--(-0.5,-0.5)--(-0.5,-1.5)--(0.5,-1.5)--(0.5,-2.5)--(1.5,-2.5)--(1.5,-3.5)--(2.5,-3.5);

    \draw[thick] (-0.5,6.5) rectangle (-1.5,5.5);

    \draw[help lines] (-4.5,-3.5) grid (9.5,10.5);

    \foreach \c in \listodd
        \draw[fill=black] \c circle(\radius);
    \foreach \c in \listeven
        \draw[fill=white] \c circle(\radius);
\end{tikzpicture}
\hspace{1cm}
    \begin{tikzpicture}[scale=0.55]
    \newcommand\radius{0.2}
    \newcommand\listodd{(3,-1),(3,1),(3,3),(3,5),(3,7),(2,8),(2,6),(2,4),(2,2),(2,0),(2,-2),(1,-1),(1,1),(1,3),(1,5),(1,7),(0,6),(0,4),(0,2),(0,0),(-1,1),(-1,3),(-1,5),(-2,4),(-2,2),(-3,3),(4,6),(4,4),(4,2),(4,0),(3,9)}
    \newcommand\listeven{(2,-3),(3,-2),(4,-1),(2,-1),(3,0),(4,1),(2,1),(3,2),(4,3),(2,3),(3,4),(4,5),(2,5),(3,6),(2,7),(3,8),(2,9),(1,8),(1,6),(1,4),(1,2),(1,0),(1,-2),(0,-1),(0,1),(0,3),(0,5),(0,7),(-1,6),(-1,4),(-1,2),(-1,0),(-2,1),(-2,3),(-2,5),(-3,4),(-3,2),(-4,3),(5,6),(5,4),(5,2),(5,0),(6,5),(4,7),(4,9),(3,10)}
    
    \draw[help lines] (-4.5,-3.5) grid (9.5,10.5);
    \draw[thick] (5.5,4.5) rectangle (6.5,5.5);

    \draw[thick] (1.5,-3.5)--(2.5,-3.5)--(2.5,-2.5)--(3.5,-2.5)--(3.5,-1.5)--(4.5,-1.5)--(4.5,-0.5)--(5.5,-0.5)--(5.5,0.5)--(4.5,0.5)--(4.5,1.5)--(5.5,1.5)--(5.5,2.5)--(4.5,2.5)--(4.5,3.5)--(5.5,3.5)--(5.5,4.5)--(4.5,4.5)--(4.5,5.5)--(5.5,5.5)--(5.5,6.5)--(4.5,6.5)--(4.5,7.5)--(3.5,7.5)--(3.5,8.5)--(4.5,8.5)--(4.5,9.5)--(3.5,9.5)--(3.5,10.5)--(2.5,10.5)--(2.5,9.5)--(1.5,9.5)--(1.5,8.5)--(0.5,8.5)--(0.5,7.5)--(-0.5,7.5)--(-0.5,6.5)--(-1.5,6.5)--(-1.5,5.5)--(-2.5,5.5)--(-2.5,4.5)--(-3.5,4.5)--(-3.5,3.5)--(-4.5,3.5)--(-4.5,2.5)--(-3.5,2.5)--(-3.5,1.5)--(-2.5,1.5)--(-2.5,0.5)--(-1.5,0.5)--(-1.5,-0.5)--(-0.5,-0.5)--(-0.5,-1.5)--(0.5,-1.5)--(0.5,-2.5)--(1.5,-2.5)--(1.5,-3.5);

    \foreach \c in \listodd
        \draw[fill=black] \c circle(\radius);
    \foreach \c in \listeven
        \draw[fill=white] \c circle(\radius);
\end{tikzpicture}
\caption{An example of a configuration in $\cC_{mb}(\ee,\oo)$ (on the left) and one in $\cC_{ib}(\ee,\oo)$ (on the right).}
\label{fig:selle3}
\end{figure}

\begin{figure}[h!]
    \includegraphics[width=\textwidth]{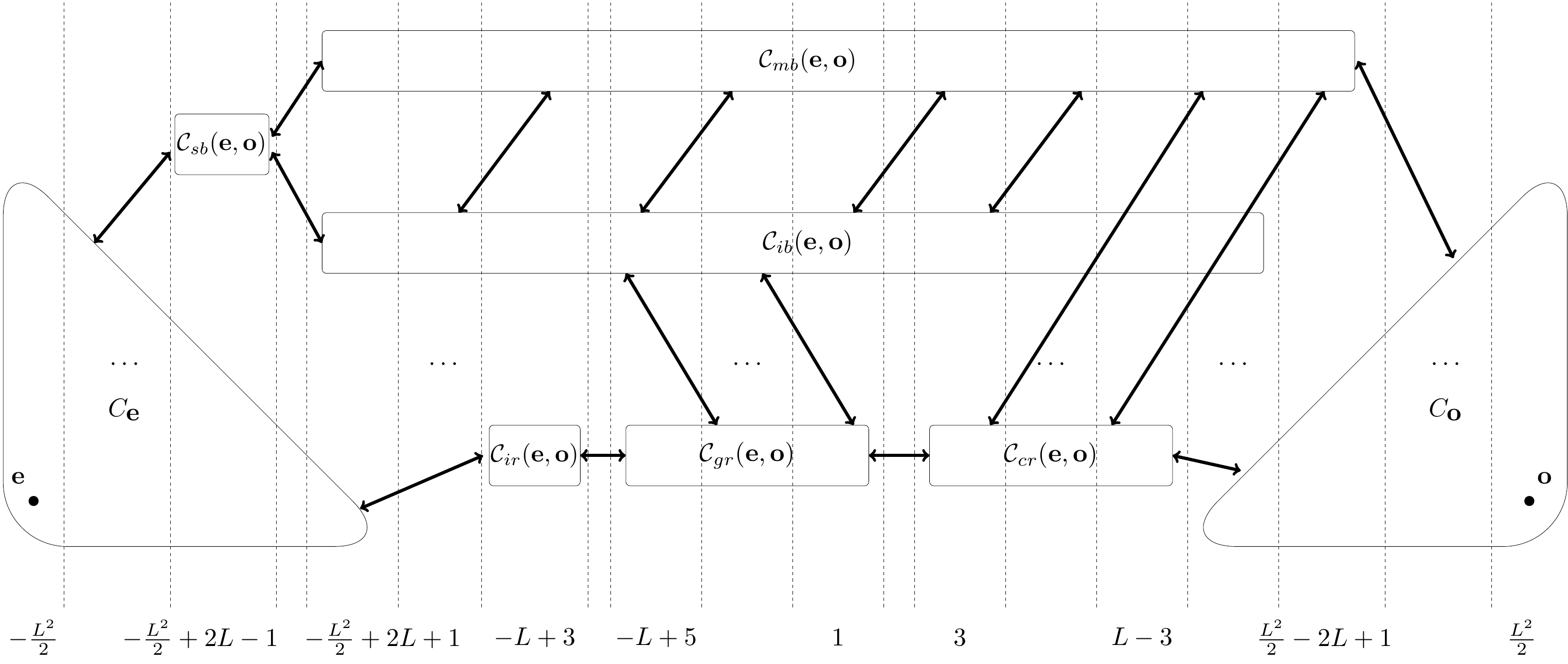}
    \caption{Schematic representation of the set of essential saddles, where we highlight with arrows between the set pairs that communicate at energy not higher than $-\frac{L^2}{2}+L+1$ and the initial cycles $\cC_{\ee}$ and $\cC_{\oo}$, see \cref{sec:defaux}. The vertical lines represent the partition of $\cX$ in manifolds, see \eqref{eq:foliation}.}
\label{fig:panorama}
\end{figure}

The following theorem characterizes the essential gate for the transition from $\ee$ to $\oo$.
\begin{thm}[Essential saddles]\label{thm:saddles}
Define the set
\[
    \cC^*(\ee,\oo):=\cC_{ir}(\ee,\oo)\cup\cC_{gr}(\ee,\oo)\cup\cC_{cr}(\ee,\oo)\cup\cC_{sb}(\ee,\oo)\cup\cC_{mb}(\ee,\oo)\cup\cC_{ib}(\ee,\oo).
\]
The essential saddles for the transition from $\ee$ to $\oo$ of the hard-core model on a $L \times L$ toric grid graph $\Lambda$ are all and only the configurations in $\mathcal C^*(\ee,\oo)$, i.e.,
\[
    \mathcal G(\ee,\oo)=\mathcal C^*(\ee,\oo).
\]
Furthermore, the possible transitions at energy not higher than $-\frac{L^2}{2}+L+1$ among the six subsets forming $\mathcal C^*(\ee,\oo)$ are as detailed in~\cref{fig:panorama}.
\end{thm}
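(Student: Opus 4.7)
The plan is to prove the two inclusions $\cC^*(\ee,\oo)\subseteq\cG(\ee,\oo)$ and $\cG(\ee,\oo)\subseteq\cC^*(\ee,\oo)$ separately, following the general Freidlin--Wentzell strategy of~\cite{Manzo2004,Nardi2019}, and then to read off the transition diagram of~\cref{fig:panorama} from a one-step analysis at the critical energy. The bookkeeping starting point is that by~\eqref{eq:cheeoo} every saddle $\s$ has $H(\s)=-\frac{L^2}{2}+L+1$, so it carries exactly $\frac{L^2}{2}-L-1$ particles, and along any optimal path a configuration adjacent to $\s$ must be obtained either by removing a particle or by adding one into an empty site whose neighbors are all empty. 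This rigidity, combined with the model-dependent isoperimetric inequality to be developed in~\cref{sec:defaux}, forces the odd region of any saddle to decompose into rhomboidal clusters with small controlled protrusions and/or bar-type clusters that wrap around the torus, plus at most two additional empty even sites.

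For the inclusion $\cC^*(\ee,\oo)\subseteq\cG(\ee,\oo)$, for each of the six subsets we exhibit an optimal reference path witnessing essentiality via clause~(i) or~(ii) of the definition. For $\cC_{ir}(\ee,\oo)$ we build a rhomboidal nucleation path $\ee\to\oo$ whose unique saddle is the chosen configuration, yielding clause~(i) directly. For $\cC_{gr}(\ee,\oo)$ and $\cC_{cr}(\ee,\oo)$ the reference path is extended by same-energy moves that grow the rhomboid along one side, respectively the longest side, and clause~(ii) is invoked to argue that the resulting plateau of saddles cannot all be bypassed at once by any other optimal path. For the bar-type sets $\cC_{sb}(\ee,\oo)$, $\cC_{mb}(\ee,\oo)$, $\cC_{ib}(\ee,\oo)$ the reference paths exploit the torus topology: a full odd column of $\frac{L}{2}$ particles is isoperimetrically competitive with the critical rhombus, and an analogous construction tailored to the bar geometry delivers the witnesses.

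For the reverse inclusion we argue that every saddle $\s\notin\cC^*(\ee,\oo)$ is a dead-end. By the isoperimetric classification, the odd region of such $\s$ still decomposes into rhomboidal and/or bar components with two residual empty even sites, but with those residuals placed in a non-extremal position or with a growth step already taken in a ``wrong'' direction. In each such case we produce a one-step move from $\s$, either inserting an odd particle into an isolated empty site or deleting an even particle at a well-chosen location, leading to another saddle $\s'$; appending a deterministic completion along the manifold structure illustrated in~\cref{fig:panorama} then yields, for every optimal $\o\ni\s$, an optimal $\o'$ with $S(\o')\subseteq S(\o)\setminus\{\s\}$, which is exactly the dead-end property. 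The arrows in~\cref{fig:panorama} follow as a byproduct: they are realized by the witnessing paths of the first step, while the bypass constructions of the second step rule out every other transition at energy $\leq -\frac{L^2}{2}+L+1$.

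The main obstacle I foresee is precisely this second inclusion: one must classify all near-critical shapes and exhibit an explicit bypass for each configuration outside $\cC^*(\ee,\oo)$. The hard-core constraint, the periodic boundary conditions, and the bipartite symmetry between $\ee$ and $\oo$ conspire to produce a zoo of candidate saddles (rhomboids with notches, partial bars, mixed bar--rhomboid hybrids), and the sharp form of the isoperimetric inequality must be deployed uniformly. A subtler issue is handling degenerate configurations at the boundary between the rhomboidal and bar regimes, where essentiality versus dead-endness depends on global rather than purely local features of the contour.
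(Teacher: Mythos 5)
Your overall two-inclusion architecture matches the paper's, and you correctly identify the two main ingredients (reference paths for each of the six sets, and an isoperimetric control on near-critical shapes). However, there is a genuine missing idea: the paper's proof is organized around the foliation of $\cX$ into the manifolds $\cV_m$ of fixed $o(\s)-e(\s)$, the parity relation $m(\s)\equiv\Delta H(\s)\pmod 2$, and above all \cref{lmm:gorgonzola}, which pins down the geometry of any non-backtracking optimal path at the single bottleneck manifold $\cV_{m^*}$ with $m^*=3-L$. That is exactly the manifold on which a saddle has real area $\frac{L^2}{2}-L+1$, so the fixed-real-area isoperimetric inequality (\cref{prop:maialona}) applies and forces membership in $\cC_{ir}\cup\cC_{ib}\cup\cC_{mb}$; the classification of all other essential saddles is then obtained by propagating forced moves forward and backward from $\cV_{m^*}$ (\cref{prp:vegetariana2}). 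Your proposal instead asserts that the isoperimetric inequality classifies the odd region of \emph{any} saddle, but a saddle is only constrained to have perimeter $4(L+1)$, and fixed perimeter alone does not determine the shape: away from $\cV_{m^*}$ the real area varies and there is a large zoo of perimeter-$4(L+1)$ configurations (multiple clusters, non-monotone shapes) that are saddles but not of your claimed form. Your plan to dispose of each of these by an explicit one-step bypass is precisely the combinatorial explosion you flag as an obstacle, and without the manifold device you have no uniform mechanism to carry it out; in the paper the unessentiality of such configurations falls out of \cref{lmm:gorgonzola} combined with \cref{lmm:cammino*}, not from shape-by-shape surgery.

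A second, more localized error: for $\cC_{ir}(\ee,\oo)$ you claim essentiality via clause~(i), i.e.\ an optimal path $\ee\to\oo$ whose \emph{unique} saddle is the chosen configuration. No such path exists: by \cref{cor:entrance}(i), any optimal path through $\cC_{ir}(\ee,\oo)$ must subsequently traverse a whole plateau of further saddles in $\cC_{gr}(\ee,\oo)$ and $\cC_{cr}(\ee,\oo)$ before descending into $\cC_\oo$, so $S(\o)$ is never a singleton. The paper instead proves essentiality for all six sets at once via clause~(ii), by a contradiction argument that again leans on the manifold structure: any competing optimal path $\o'$ must cross $\cV_{m(\s)}$ (\cref{lmm:manifold}(a)); parity forces $\Delta H$ odd there; $\Delta H\geq L+3$ kills optimality, $\Delta H\leq L-1$ forces membership in an initial cycle by \cref{lem:reducible} and then \cref{cor:entrance} produces a saddle outside $S(\o)$, and $\Delta H=L+1$ does so directly. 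Your ``cannot all be bypassed at once'' assertion for the $\cC_{gr}$/$\cC_{cr}$ plateau is exactly what this argument substantiates and what your proposal leaves unproved. In short, the statement is plausible under your outline, but both inclusions have gaps that the manifold/parity machinery of \cref{lmm:manifold}, \cref{lem:reducible} and \cref{lmm:gorgonzola} is needed to close.
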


\section{Definitions and auxiliary results}
\label{sec:defaux}
The main goal of this section is to introduce the notion of odd clusters, which are the basis of the geometrical description of the configurations, and to inspect the relation between their shape and perimeter.

In \cref{sec:oddcluster} we define a geometrical representation of clusters associated with the occupied odd sites, and in \cref{sec:oddrhombi} we introduce the notion of rhombi, which turns out to be crucial in the description of the essential saddles. In \cref{sec:filling} we present two algorithms that, combined together, return a path whose last configuration has a rhomboidal shape and such that the energy along it never increases. We will use them to deduce that there exists a downhill path from the configurations without a rhomboidal cluster towards $\ee$ or $\oo$. 

Along the lines of~\cite[eq.~(2.7)]{Manzo2004}, we define $C_{\ee}:=\{ \z \in \cX ~|~ \Phi(\z,\ee) < \Phi(\ee,\oo)\}$ to be the \textit{initial cycle} of $\ee$, that is the maximal cycle that includes $\ee$ but does not include $\oo$, namely, it contains all the configurations that can be reached by $\ee$ by spending strictly less energy than the one needed for the transition between $\ee$ and $\oo$, i.e., the communication height $\Phi(\ee,\oo)$. The corresponding initial cycle of $\oo$ is defined analogously and denoted by $C_{\oo}$.

Given a configuration $\sigma \in \cX$, denote by $\Delta H(\sigma)$ the energy difference with respect to either one of the stable states, i.e.,
\begin{equation}\label{eq:pomodorina}
    \Delta H(\sigma):=H(\sigma)-H(\ee).
\end{equation}

\subsection{Odd clusters and regions}
\label{sec:oddcluster}
For any subset of sites $S \subseteq V$ we define the \textit{complement} of $S$ as $S^c:=V\setminus S$, the \textit{external boundary} $\pap S$ as the subset of sites in $S^c$ that are adjacent to a site in $S$, \ie
\[
    \pap S := \{ v \in S^c ~|~ \exists \, w \in S ~:~ (v,w) \in E \},
\]
and $\nabla S$ as the subset of edges connecting the sites in $S$ with those in $\pap S$, \ie
\[
    \nabla S:=\{(v,w) \in E ~|~ v \in S, \, w \in \pap S\}.
\]
A (connected) \textit{odd cluster} $C \subseteq V $ is a subset of sites that satisfies both the following conditions:
\begin{enumerate}
    \item If an odd site $v \in \Vo$ belongs to $C$, then so do the four neighboring even sites, \ie $\pap \{v\} \subset C$;
    \item $C \cap \Ve$ is connected as a sub-graph of the graph $(\Ve, E^*)$, with  $E^*:=\{(v,w) \in \Ve \times \Ve ~|~ d(v,w)=2\}$, where $d(\cdot,\cdot)$ denotes the usual graph distance on $\L$.
\end{enumerate}
We denote by $\mathrm{C}_\oo(\L)$ the collection of the odd clusters on $\L$.

Consider the dual graph $\L'=(V',E')$ of the graph $\L$, which is a discrete torus of the same size. Given an odd cluster $C$, consider the edge set $\nabla C$ that disconnects $C$ from its complement $C^c$. We associate with $\nabla C$ the edge set $\g(C) \subset E'$ on the dual graph $\L'$ which consists of all the edges of $\L'$ orthogonal to edges in $\nabla C$. Such a set, to which we will refer as the \textit{contour} of the cluster $C$, consists of one or more piecewise linear closed curves and, by construction $|\g(C)|=|\nabla C|$. Leveraging this fact, we define the \textit{perimeter} $P(C)$ of the odd cluster $C$ as the total length of the contour $\gamma(C)$, \ie
\begin{equation}
\label{eq:pC}
    P(C):=|\g(C)|.
\end{equation}
As proved in~\cite{ZBvLN13}, the perimeter of the odd cluster $C$ satisfies the following identity:
\begin{equation}
\label{eq:identitypC}
    P(C)=4(|C \cap \Ve|-|C \cap \Vo|). 
\end{equation}
We call \textit{area} of an odd cluster $C$ the number of odd occupied sites it comprises. We say that an odd cluster is degenerate if it has area 0 and non--degenerate otherwise.

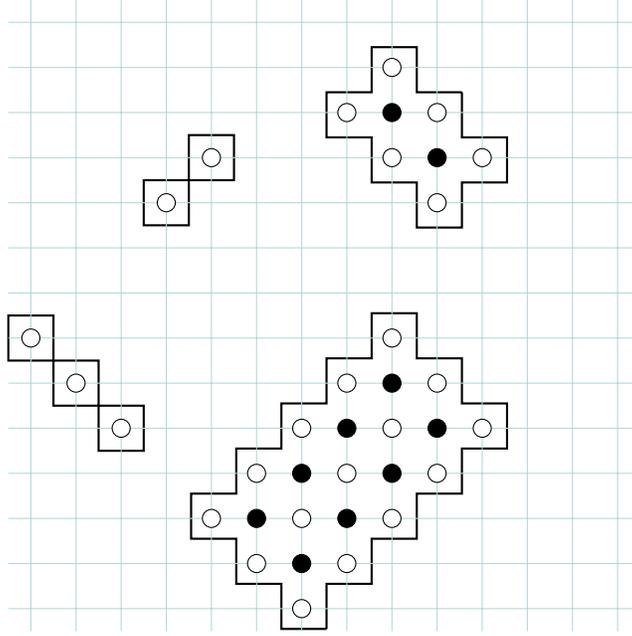
\begin{figure}[!h]
\begin{center}
\begin{tikzpicture}[scale=0.6]
    \newcommand\radius{0.2}
    \newcommand\listodd{(3,-1),(3,1),(2,0),(2,-2),(1,-1),(4,2),(4,0),(5,1),(5,7),(4,8)}
    \newcommand\listeven{(2,-3),(3,-2),(4,-1),(2,-1),(3,0),(4,1),(2,1),(3,2),(4,3),(1,0),(0,-1),(1,-2),(5,2),(6,1),(5,0),(5,8),(4,7),(4,9),(3,8),(6,7),(5,6),(0,7),(-1,6),(-2,1),(-3,2),(-4,3)}
    \draw[thick] (2.55,-3.45)--(2.55,-2.45)--(3.55,-2.45)--(3.55,-1.45)--(4.55,-1.45)--(4.55,-0.45)--(5.55,-0.45)--(5.55,0.55)--(6.55,0.55)--(6.55,1.55)--(5.55,1.55)--(5.55,2.55)--(4.55,2.55)--(4.55,3.55)--(3.55,3.55)--(3.55,2.55)--(2.55,2.55)--(2.55,1.55)--(1.55,1.55)--(1.55,0.55)--(0.55,0.55)--(0.55,-0.45)--(-0.45,-0.45)--(-0.45,-1.45)--(0.55,-1.45)--(0.55,-2.45)--(1.55,-2.45)--(1.55,-3.45)--(2.55,-3.45);
    \draw[thick] (5.55,8.45)--(4.55,8.45)--(4.55,9.45)--(3.55,9.45)--(3.55,8.45)--(2.55,8.45)--(2.55,7.45)--(3.55,7.45)--(3.55,6.45)--(4.55,6.45)--(4.55,5.45)--(5.55,5.45)--(5.55,6.45)--(6.55,6.45)--(6.55,7.45)--(5.55,7.45)--(5.55,8.45);
    \draw[thick] (-0.5,6.5) rectangle (-1.5,5.5);
    \draw[thick] (-0.5,6.5) rectangle (0.5,7.5);
    \draw[thick] (-3.5,1.5) rectangle (-2.5,2.5);
    \draw[thick] (-2.5,0.5) rectangle (-1.5,1.5);
    \draw[thick] (-4.5,2.5) rectangle (-3.5,3.5);
    \draw[help lines] (-4.5,-3.5) grid (9.5,10.5);
    \foreach \c in \listodd
        \draw[fill=black] \c circle(\radius);
    \foreach \c in \listeven
        \draw[fill=white] \c circle(\radius);
\end{tikzpicture}
 \caption{Example of four different rhombi, namely $\cR_{1,2}$, $\cR_{4,2}$, $\cR_{0,2}$ and $\cR_{1,0}$ (in clockwise order from the top-right corner).}
\label{fig:rombi}
\end{center}
\end{figure}

We introduce a mapping $\cO: \cX \to 2^V$ that associates to a given hard-core configuration $\s \in \cX$ the subset $\cO(\s) \subseteq V$ defined as
\begin{equation}
\label{eq:cC}
    \cO(\s):=\{ v \in V_\oo ~|~ \s(v) =1 \} \cup \{ v \in V_\ee ~|~ \s(v) =0 \}.
\end{equation}
In other words, $\cO(\s)$ is the subset comprising all the occupied odd sites and the empty even sites of the configuration $\s$. It is immediate to check that $\cO$ is an injective mapping and we will refer to the image $\cO(\s)$ of a configuration $\s$ as its \textit{odd region}. 

The odd region $\cO(\s)$ of a configuration $\s \in \cX$ can be partitioned into its connected components,
say $C_1(\s), \dots, C_m(\s) \in \mathrm{C}_\oo(\L)$, for some $m \in \N$, which are, by definition, odd clusters, that is
\begin{equation}
\label{eq:unionclusters}
    \cO(\s) = \bigsqcup_{i=1}^m C_i(\s).
\end{equation}
Using the partition~\eqref{eq:unionclusters} of the odd region $\cO(\s)$ into odd clusters, the definitions of contour and perimeter can be extended to the whole odd region in an obvious way, so that we can ultimately define the \textit{contour} $\g(\s)$ of a configuration $\s \in \cX$ as
\begin{equation}
\label{eg:gammas}
    \g(\s):=\bigsqcup_{i=1}^m \g(C_i(\s)),
\end{equation}
and its \textit{perimeter} $P(\s)$ as
\begin{equation}
\label{eq:ps}
    P(\s):=\sum_{i=1}^m P(C_i(\s)). 
\end{equation}
As shown in~\cite{ZBvLN13}, starting from~\eqref{eq:identitypC}, a double counting argument yields the following identity that relates the perimeter $P(\s)$ of a hard-core configuration $ \s \in \cX$ with its energy $H(\s)$ (recall \eqref{eq:pomodorina}-\eqref{eq:pC})
\begin{equation}
\label{eq:contour}
    P(\s)= 4 \, \D H(\s).
\end{equation}

Given a configuration $\sigma\in\cX$, we define the odd non-degenerate region $\cO^{nd}(\sigma)$ as a subset of $\cO(\sigma)$ containing only odd non-degenerate clusters. See~\cref{fig:oddregion} for an example of an odd region and an odd non-degenerate region. 

\begin{figure}[h!]
\begin{center}
\begin{tikzpicture}[scale=0.6]
    \newcommand\radius{0.2}
    \newcommand\listodd{(3,-1),(3,1),(2,0),(2,-2),(1,-1),(4,2),(4,0),(5,1),(5,7),(4,8),(5,9),(3,7)}
    \newcommand\listeven{(2,-3),(3,-2),(4,-1),(2,-1),(3,0),(4,1),(2,1),(3,2),(4,3),(1,0),(0,-1),(1,-2),(5,2),(6,1),(5,0),(5,8),(4,7),(4,9),(3,8),(6,7),(5,6),(0,7),(-1,6),(-2,1),(-3,2),(-4,3),(5,10),(6,9),(2,7),(3,6),(1,6),(-2,7),(0,5),(8,-1)}
    \draw[thick] (2.55,-3.45)--(2.55,-2.45)--(3.55,-2.45)--(3.55,-1.45)--(4.55,-1.45)--(4.55,-0.45)--(5.55,-0.45)--(5.55,0.55)--(6.55,0.55)--(6.55,1.55)--(5.55,1.55)--(5.55,2.55)--(4.55,2.55)--(4.55,3.55)--(3.55,3.55)--(3.55,2.55)--(2.55,2.55)--(2.55,1.55)--(1.55,1.55)--(1.55,0.55)--(0.55,0.55)--(0.55,-0.45)--(-0.45,-0.45)--(-0.45,-1.45)--(0.55,-1.45)--(0.55,-2.45)--(1.55,-2.45)--(1.55,-3.45)--(2.55,-3.45);
    \draw[thick] (3.5,9.5)--(3.5,8.5)--(2.5,8.5)--(2.5,7.5)--(1.5,7.5)--(1.5,6.5)--(2.5,6.5)--(2.5,5.5)--(3.5,5.5)--(3.5,6.5)--(4.5,6.5)--(4.5,5.5)--(5.5,5.5)--(5.5,6.5)--(6.5,6.5)--(6.5,7.5)--(5.5,7.5)--(5.5,8.5)--(6.5,8.5)--(6.5,9.5)--(5.5,9.5)--(5.5,10.5)--(4.5,10.5)--(4.5,9.5)--(3.5,9.5);
    \draw[thick,red] (-1.5,6.5) rectangle (-0.5,5.5);
    \draw[thick,red] (-0.5,6.5) rectangle (0.5,7.5);
    \draw[thick,red] (-3.5,1.5) rectangle (-2.5,2.5);
    \draw[thick,red] (-2.5,0.5) rectangle (-1.5,1.5);
    \draw[thick,red] (-4.5,2.5) rectangle (-3.5,3.5);
    \draw[thick,red] (0.5,5.5) rectangle (1.5,6.5);
    \draw[thick,red] (-2.5,6.5) rectangle (-1.5,7.5);
    \draw[thick,red] (-0.5,4.5) rectangle (0.5,5.5);
    \draw[thick,red] (7.5,-1.5) rectangle (8.5,-0.5);
    \draw[help lines] (-4.5,-3.5) grid (9.5,10.5);
    \foreach \c in \listodd
        \draw[fill=black] \c circle(\radius);
    \foreach \c in \listeven
        \draw[fill=white] \c circle(\radius);
\end{tikzpicture}
\caption{Example of a configuration $\sigma$, in which the contour of the non-degenerate (degenerate) odd clusters is highlighted in black (red, respectively). The contour $\gamma(\s)$ of the configuration $\s$ is of the odd region $\cO(\sigma)$ is the union of black lines (corresponding to $\cO^{nd}(\s)$) and red lines.}
\label{fig:oddregion}
\end{center}
\end{figure}
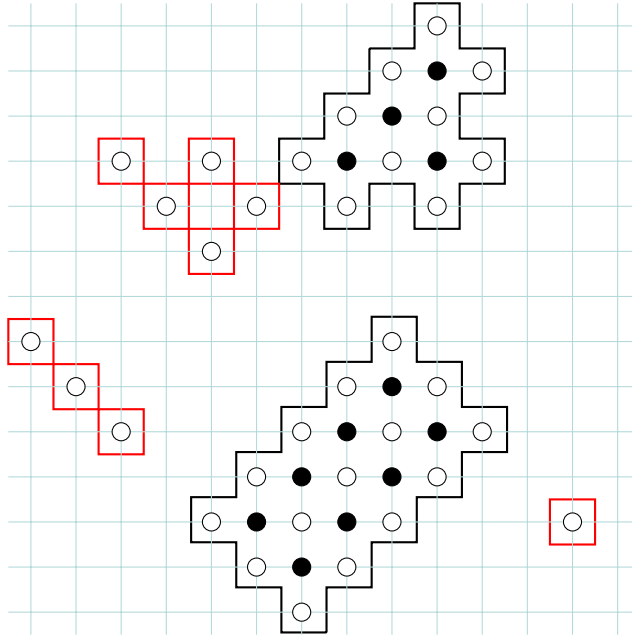

\subsection{Odd rhombi}
\label{sec:oddrhombi}
Given an odd site $\h=(\h_1,\h_2) \in V_\oo$ and two positive integers $\ell_1,\ell_2\leq L$, the \textit{odd rhombus} $\cR_{\ell_1,\ell_2}(\eta)$ with reference site $\h$ and lengths $\ell_1$ and $\ell_2$ is the odd cluster defined as
\begin{equation}
\label{eq:diamond}
    \cR_{\ell_1,\ell_2}(\h):= S_{\ell_1,\ell_2}(\h) \cup \pap S_{\ell_1,\ell_2}(\h),
\end{equation}
where $S_{\ell_1,\ell_2}(\h) \subseteq V_\oo$ is the subset of odd sites given by
\begin{align}\label{def:rectangle_odd_points}
    S_{\ell_1,\ell_2}(\h)
    &:= \bigcup_{0 \leq k \leq \ell_1-1, \, 0 \leq j \leq \ell_2-1} \{ (\h_1 + k + j, \h_2 + k - j )\} \nonumber \\
    &=  \{ v=(v_1,v_2) \in V ~|~ \exists \, k \in [[0,\ell_1]], \, j \in [[0,\ell_2]] ~:~ v_1 = \h_1 + k + j, \, v_2 = \h_2 + k - j\}.
\end{align}
In the latter definition, the coordinates sums and subtractions are taken modulo $L$. In the case $\ell_1\ell_2=0$, we can take $\h\in V_\ee$ and define the \emph{degenerate} rhombus $\cR_{\ell_1,\ell_2}(\eta)$ as the odd cluster 
\[
\cR_{\ell_1,\ell_2}(\h)=
\begin{cases}
\bigcup_{0 \leq j \leq \ell_2} \{ (\h_1 + j, \h_2 - j )\} &\hbox{if } \ell_1=0 \hbox{ and } \ell_2\neq0, \\
\bigcup_{0 \leq k \leq \ell_1} \{ (\h_1 + k, \h_2 - k )\} &\hbox{if } \ell_1\neq0 \hbox{ and } \ell_2=0, \\
(\eta_1,\eta_2) &\hbox{if } \ell_1=\ell_2=0.
\end{cases}
\]
Note that, in this case, $\cR_{\ell_1,\ell_2}(\h)$ is a subset of even sites. The area of $\cR_{\ell_1,\ell_2}(\h)$ is the cardinality of $S_{\ell_1,\ell_2}(\h)$ in the non-degenerate case, whereas in the degenerate case is equal to zero. 
Some example of rhombi and degenerate rhombi are shown in \cref{fig:rombi}. We observe that the non-degenerate rhombus $\cR_{\ell_1,\ell_2}$ has $\ell_1$ diagonals of length $\ell_2$ and $\ell_2$ diagonals of length $\ell_1$ in the opposite direction, which we will refer to as {\it complete diagonals}. We denote by $\mathrm{R}_\oo(\L) \subset \mathrm{C}_\oo(\L)$ the collection of all odd rhombi on $\L$ including the degenerate ones. For every odd cluster $C \in  \mathrm{C}_\oo(\L)$, we define the \textit{surrounding rhombus} $\cR(C)$ as the minimal rhombus (by inclusion) in $\mathrm{R}_\oo(\L) $ such that $C \subseteq \cR(C)$; see \cref{fig:cluster} for an example.

Most of the results for odd rhombi that will be proved are translation-invariant, the reason why we will often refer to the rhombus $\cR_{\ell_1,\ell_2}(\h)$ simply as $\cR_{\ell_1,\ell_2}$, without explicitly specifying the reference site $\h$. 
The next two lemmas, Lemmas \ref{lem:diff} and \ref{lem:complrombo},  concern properties of rhombi on a square $L \times L$ grid with periodic boundary conditions. Their proofs, being involved but not particularly insightful, are deferred to Appendix \ref{sec:app}. 

\begin{lem}[Set of sites winds around the torus]\label{lem:diff} 
Given $\eta=(\eta_1,\eta_2)\in V_\oo$ and two non-negative integers $\ell_1,\ell_2\leq L$ such that $\ell_1 \leq \ell_2$ and $\ell_1\geq L/2$, the following statements hold:
\begin{itemize}
\item[(i)] If $\ell_2\leq L-2$, then
\begin{align*}
\displaystyle\bigcup_{\substack{0\leq k\leq \ell_1 \\ \ell_2+1\leq j\leq L-1}}\{(\eta_1+k+j-1,\eta_2+k-j)\}\cup 
\bigcup_{\substack{\ell_1+1\leq k\leq L-1 \\ 0\leq j\leq \ell_2}} &\{(\eta_1+k+j-1,\eta_2+k-j)\} \\
&\subseteq \displaystyle\bigcup_{\substack{0\leq k\leq \ell_1 \\ 0\leq j\leq \ell_2}}\{(\eta_1+k+j-1,\eta_2+k-j)\}.
\end{align*}
\item[(ii)] If $\ell_2=L-1$, then 
\begin{align}\label{eq:stima1}
\displaystyle\bigcup_{\substack{\ell_1+1\leq k\leq L-1 \\ 0\leq j\leq L-1}}\{(\eta_1+k+j-1,\eta_2+k-j)\}
\subseteq \displaystyle\bigcup_{\substack{0\leq k\leq \ell_1 \\ 0\leq j\leq L-1}}\{(\eta_1+k+j-1,\eta_2+k-j)\}
\end{align}
and
\begin{align}\label{eq:stima2}
\displaystyle\bigcup_{\substack{L/2\leq k\leq \ell_1-1 \\ 1\leq j\leq L-2}}\{(\eta_1+k+j,\eta_2+k-j)\}
\subseteq \displaystyle\bigcup_{\substack{0\leq k\leq L/2-1 \\ 0\leq j\leq L-2}}\{(\eta_1+k+j,\eta_2+k-j)\}.
\end{align}
\end{itemize}
\end{lem}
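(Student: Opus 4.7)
The plan is to verify both inclusions by purely modular arithmetic on the discrete torus $(\Z/L\Z)^2$. Two index pairs $(k,j)$ and $(k',j')$ map to the same torus site under $(k,j)\mapsto(\eta_1+k+j-1,\eta_2+k-j)\pmod L$ if and only if $(k+j,k-j)\equiv(k'+j',k'-j')\pmod L$; since $L$ is even, this is in turn equivalent to $k'-k$ and $j'-j$ being multiples of $L/2$ with $(k'-k)+(j'-j)\equiv 0\pmod L$. The admissible ``shifts'' one may apply to a pair without changing its torus image are therefore of the form $(\pm L/2,\pm L/2)$ with equal signs and $(\pm L/2,\mp L/2)$ with opposite signs (together with integer multiples of $L$ in either coordinate). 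The analogous statement holds for the parametrization $(\eta_1+k+j,\eta_2+k-j)$ used in the second inclusion of part (ii).

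For part (i), I would take $(k,j)$ in the first extra range $[0,\ell_1]\times[\ell_2+1,L-1]$ and exhibit an equivalent pair in the main range $[0,\ell_1]\times[0,\ell_2]$ by applying $(k,j)\mapsto(k+L/2,j-L/2)$ when $k\le \ell_1-L/2$ and $(k,j)\mapsto(k-L/2,j-L/2)$ otherwise; the hypothesis $\ell_1\ge L/2$ ensures that these two sub-ranges of $k$ together cover $[0,\ell_1]$, while $\ell_2\ge \ell_1\ge L/2$ guarantees that $j-L/2\in[1,L/2-1]\subseteq[0,\ell_2]$. The second extra range $[\ell_1+1,L-1]\times[0,\ell_2]$ is handled by the dual shift, swapping the roles of $k$ and $j$. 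For part (ii), the condition $\ell_2=L-1$ collapses the $j$-range to $[0,L-1]$ on both sides of the first inclusion, so only the $k$-extra has to be moved, which the same type of $k$-shift accomplishes; the second inclusion in (ii) uses the shifted parametrization (without the $-1$) and lands in the smaller target range $[0,L/2-1]\times[0,L-2]$ by an analogous shift.

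The main obstacle will not be conceptual but combinatorial: several sub-cases must be distinguished according to the position of $k$ and $j$ relative to $L/2$, $\ell_1$, and $\ell_2$, and in each sub-case one has to verify that the chosen shift genuinely produces indices inside the target range. The hypothesis $\ell_1\ge L/2$ is precisely what makes the relevant sub-intervals cover $[0,\ell_1]$, so it is used in an essential way at the splitting step rather than in the shift itself. The arithmetic within each sub-case is elementary modular reduction, which is presumably why the authors defer the full verification to the appendix.
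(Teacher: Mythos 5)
Your reduction of the problem to the shift classification is the right framework (and it is also the skeleton of the paper's own argument): since $L$ is even, two index pairs give the same torus site if and only if they differ by the simultaneous shift of \emph{both} indices by $L/2$ modulo $L$. The gap is the sentence claiming that the hypothesis $\ell_1\ge L/2$ makes your two sub-ranges of $k$ cover $[0,\ell_1]$. The branch $(k,j)\mapsto(k+L/2,j-L/2)$ needs $k\le\ell_1-L/2$, and the branch $(k,j)\mapsto(k-L/2,j-L/2)$ needs $k\ge L/2$; together they cover only $[0,\ell_1-L/2]\cup[L/2,\ell_1]$, leaving the middle range $\ell_1-L/2<k<L/2$ uncovered, and this range is nonempty precisely when $\ell_1\le L-2$, which \emph{always} holds in part (i) because $\ell_1\le\ell_2\le L-2$. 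Moreover the gap is not repairable by a smarter shift: for such $k$ the only other preimage of the site is $\bigl((k+L/2)\bmod L,\,j-L/2\bigr)$, whose first index exceeds $\ell_1$, so no representative lies in $[0,\ell_1]\times[0,\ell_2]$ and the claimed inclusion actually fails. Concretely, take $L=6$, $\ell_1=\ell_2=3$ and $(k,j)=(1,4)$ in the first union of (i): the site $(\eta_1+4,\eta_2+3)$ has exactly the two preimages $(1,4)$ and $(4,1)$, neither in $[0,3]^2$, so it does not belong to the right-hand side. The same phenomenon breaks \eqref{eq:stima2} whenever $\ell_1>L/2$ (take $(k,j)=(L/2,L/2-1)$: its only other preimage is $(0,L-1)$, excluded by the cap $j\le L-2$). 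Only \eqref{eq:stima1} is correct, for the reason you indicate: there $k\ge\ell_1+1\ge L/2+1$, so the downward $k$-shift always lands in range, and the full $j$-range absorbs the accompanying $j$-shift.

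You should know that the paper's appendix proof commits exactly the same error, so there was no correct argument for you to match: its displayed inequality chain for $k_2^*=k_1^*+L/2\pmod L$ uses the unreduced bound $k_2^*\ge L/2$ (valid only when $k_1^*<L/2$) together with the reduced bound $k_2^*\le\ell_1-L/2$ (valid only when $k_1^*\ge L/2$), silently skipping the middle range where neither applies; carried out carefully, your own shift analysis is what exposes the counterexample. What is true in this parameter regime, and what the application in \cref{lem:complrombo}(i) actually needs, is the opposite inclusion: the image of the corner range $[\ell_1+1,L-1]\times[\ell_2+1,L-1]$ is contained in the image of the box (shift both indices down by $L/2$; since $\ell_1,\ell_2\ge L/2$ the shifted indices land in $[1,L/2-1]$), while the complement $V_\ee\setminus\partial^+S_{\ell_1,\ell_2}(\eta)$ is the image of the sub-rectangle $[\ell_1-L/2+1,L/2-1]\times[\ell_2+1,L-1]$ of your first cross region; this still yields a rhombic complement, but anchored at a reference site shifted by $(L/2,L/2)$ with respect to the one written in the paper, so \cref{lem:complrombo}(i) survives while \cref{lem:diff}(i) as stated does not.
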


For any subset of sites $A\subseteq V$, we define the \textit{complement} of $A$ as the complementary set of $A$ in $V$, i.e., as $V \setminus A$. 

\begin{lem}[Properties of rhombi]\label{lem:complrombo}
Given $\eta\in V_\oo$ and two non negative integers $\ell_1,\ell_2\leq L$, the following statements hold:
\begin{itemize}
    \item[(i)] If $\max\{\ell_1,\ell_2\}\leq L-2$ and $\min\{\ell_1,\ell_2\}\geq L/2$, then the complement of the rhombus $\cR_{\ell_1,\ell_2}(\eta)$ is a rhombus $R_{L-\ell_1-1,L-\ell_2-1}(\hat\eta)$ for some $\hat\eta\in V_\ee$.
    \item[(ii)] If $\max\{\ell_1,\ell_2\}=L-1$ and $\min\{\ell_1,\ell_2\}\geq L/2$, then the complement of the rhombus $\cR_{\ell_1,\ell_2}(\eta)$ is the disjoint union of $L-\min\{\ell_1,\ell_2\}$ odd sites.
    \item[(iii)] If $\max\{\ell_1,\ell_2\}= L$ and $\min\{\ell_1,\ell_2\}< L/2$, then the rhombus $\cR_{\ell_1,\ell_2}(\eta)$ contains $L\, \ell_1$ odd sites and $L(\ell_1+1)$ even sites.
    \item[(iv)] If $\max\{\ell_1,\ell_2\}= L$ and $\min\{\ell_1,\ell_2\}\geq L/2$, then the rhombus $\cR_{\ell_1,\ell_2}(\eta)$ coincides with $V$.
\end{itemize}
\end{lem}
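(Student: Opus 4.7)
All four parts can be proved by a careful analysis of the parametrization $\phi\colon(k,j) \in \Z^2 \mapsto \eta + (k+j,\, k-j) \bmod L$ of odd sites of $\L$, together with its half-shifted analogue for even sites. The kernel of $\phi$ in $\Z^2$ is the sublattice generated by $(L,0)$ and $(L/2, L/2)$, of index $L^2/2 = |V_\oo|$, so two lattice points in a box $[0,\ell_1)\times[0,\ell_2)$ represent the same site of $\L$ iff their difference belongs to this kernel; the only internal collision within such a box comes from the generator $(L/2,L/2)$, and \cref{lem:diff} is tailored to control exactly this winding behaviour, for both the odd box parametrizing $S_{\ell_1,\ell_2}(\eta)$ and the enlarged even box parametrizing $\pap S_{\ell_1,\ell_2}(\eta)$.

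For part (iv), with $\max\{\ell_1,\ell_2\} = L$ and $\min \geq L/2$, \cref{lem:diff}(ii) shows $S_{\ell_1,\ell_2}(\eta) = V_\oo$ and $\pap S_{\ell_1,\ell_2}(\eta) = V_\ee$, so $\cR_{\ell_1,\ell_2}(\eta) = V$. For part (iii), with $\max = L$ and $\min < L/2$, the bound $\min < L/2$ forbids any $(L/2,L/2)$-collision in the odd box and yields $|S_{\ell_1,\ell_2}(\eta)| = L\min\{\ell_1,\ell_2\}$; in the enlarged even box, identifying $n=0$ with $n=L$ via the kernel element $(0,L)=2(L/2,L/2)-(L,0)$ gives $|\pap S_{\ell_1,\ell_2}(\eta)| = L(\min\{\ell_1,\ell_2\}+1)$. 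For part (ii), with $\max = L - 1$ and $\min \geq L/2$, \cref{lem:diff}(ii) forces $\pap S_{\ell_1,\ell_2}(\eta) = V_\ee$, while a direct collision count in the odd box leaves exactly $L - \min\{\ell_1,\ell_2\}$ sites outside $S_{\ell_1,\ell_2}(\eta)$; since any two odd sites of $\L$ are non-adjacent, these missing sites form the required disjoint union of odd sites.

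Part (i) is the main obstacle. The strategy is to exhibit an explicit even reference $\hat\eta \in V_\ee$---realised as the even site ``diagonally opposite'' $\eta$ on the torus, with exact coordinates depending on the parity of $L/2$---and to verify $V \setminus \cR_{\ell_1,\ell_2}(\eta) = \cR_{L-\ell_1-1,L-\ell_2-1}(\hat\eta)$ by mutual inclusion. The $\supseteq$ direction follows from \cref{lem:diff}(i), which guarantees that each site in the candidate small rhombus about $\hat\eta$ lies outside the box parametrizing $\cR_{\ell_1,\ell_2}(\eta)$ modulo the kernel; the reverse inclusion is then upgraded to equality via a cardinality match, since the $2(\ell_1 - L/2)(\ell_2 - L/2)$ collisions in the odd box of $\cR_{\ell_1,\ell_2}(\eta)$ and the $2(\ell_1+1-L/2)(\ell_2+1-L/2)$ collisions in its enlarged even box, combined with the absence of collisions in the smaller rhombus about $\hat\eta$ (whose sides are $\leq L/2 - 1$), partition $V$ exactly. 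The genuine difficulty is pinning down $\hat\eta$ precisely so that the two rhombi meet exactly along their boundaries without any shared cells, which requires a short case analysis according to the parities of $\ell_1$, $\ell_2$ and $L/2$.
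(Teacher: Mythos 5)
Your lattice--kernel framework is sound, and your treatment of parts (ii)--(iv) is essentially correct: the kernel of the odd parametrization is indeed generated by $(L,0)$ and $(L/2,L/2)$, and your stated collision counts are right (note that the factor $2$ in $2(\ell_1-L/2)(\ell_2-L/2)$ is exactly what accounts for shifts by both $(L/2,L/2)$ and $(L/2,-L/2)=(L/2,L/2)-(0,L)$, so your earlier phrase that the only internal collision ``comes from the generator $(L/2,L/2)$'' is loose, but the numbers you use are consistent). A direct count as you indicate does give $|S_{\ell_1,L-1}|=\frac{L^2}{2}-L+\min\{\ell_1,\ell_2\}$, $|S_{\ell_1,L}|=L\min\{\ell_1,\ell_2\}$ and $|\pap S_{\ell_1,L}|=L(\min\{\ell_1,\ell_2\}+1)$, and the three cardinalities in your argument for (i) do sum to $L^2$. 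The paper proves (ii)--(iv) by the same kind of index manipulation, via \eqref{eq:stima1}--\eqref{eq:stima2}, so up to here the two arguments are close.

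The genuine gap is in part (i), which you yourself call the main obstacle: you never actually produce $\hat\eta$, nor verify the inclusion $\cR_{L-\ell_1-1,L-\ell_2-1}(\hat\eta)\subseteq V\setminus\cR_{\ell_1,\ell_2}(\eta)$, and the description you give of $\hat\eta$ (``diagonally opposite $\eta$ on the torus, with coordinates depending on the parity of $L/2$'', to be fixed by ``a case analysis according to the parities of $\ell_1$, $\ell_2$ and $L/2$'') points in the wrong direction. No parity case analysis is needed and $\hat\eta$ is not the antipode of $\eta$: the paper obtains it in one stroke by writing $V_\ee$ as the full index box $\{0\le k',j'\le L-1\}$ based at $\eta$, discarding the L-shaped leftover indices by \cref{lem:diff}(i), and changing variables, which yields explicitly $\hat\eta=(\eta_1+\ell_1+\ell_2+1,\,\eta_2+\ell_1-\ell_2)$ (automatically in $V_\ee$, since the shift has odd coordinate sum), together with the identities $V_\ee\setminus\pap S_{\ell_1,\ell_2}(\eta)=S_{L-\ell_1-1,L-\ell_2-1}(\hat\eta)$ and $V_\oo\setminus S_{\ell_1,\ell_2}(\eta)=\pap S_{L-\ell_1-1,L-\ell_2-1}(\hat\eta)$, with no counting step at all. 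Note also that \cref{lem:diff}(i) by itself says nothing about a rhombus based at a different reference site; to use it for your $\supseteq$ inclusion you must first express the candidate small rhombus in the index coordinates based at $\eta$, i.e.\ you must already have performed the very change of variables (and hence identified $\hat\eta$) that you defer. Your inclusion-plus-cardinality route would work once $\hat\eta$ is pinned down in this way, but as written the decisive construction is missing, so part (i) is not proved.
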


These two lemmas will now be used to prove the next proposition, which gives a formula for the perimeter of a rhombus $\cR_{\ell_1,\ell_2}$. To this end, we will use the fact that a rhombus $\cR_{\ell_1,\ell_2}$ and its complement in $\Lambda$ have the same boundary for any $0\leq \ell_1,\ell_2\leq L$, and, in particular the same perimeter. In addition, we will say that a rhombus $\cR_{\ell_1,\ell_2}$ {\it winds vertically} (resp.\ {\it horizontally}) {\it around the torus} if there exists a set of $\frac{L}{2}$ odd sites $\h_1,...,\h_{L/2}$ in $\cR_{\ell_1,\ell_2}$ all on the same column (resp.\ row).
If the direction is not relevant, we will simply say that the rhombus {\it winds around the torus}. 

\begin{prop}[Formula for rhombus perimeter]\label{conj:rhombus}
Given a $L \times L$ toric grid graph $\Lambda$ and any sizes $0\leq \ell_1,\ell_2 \leq L$, the perimeter of the rhombus $\cR_{\ell_1,\ell_2}$ satisfies the following identity
\begin{equation}\label{eq:perimeter}
	P (\cR_{\ell_1,\ell_2}) =  4 \times
	\begin{cases}
		\ell_1 + \ell_2+1 & \text{ if } \min \{\ell_1,\ell_2\} < L/2 \text{ and } \max\{\ell_1,\ell_2\} < L,\\
		2L-(\ell_1+\ell_2+1) & \text{ if } \min \{\ell_1,\ell_2\} \geq L/2 \text{ and } \max\{\ell_1,\ell_2\} < L,\\
        L & \text{ if } \min \{\ell_1,\ell_2 \} < L/2 \text{ and } \max\{\ell_1,\ell_2\} =L ,\\
		0 & \text{ if } \min \{\ell_1,\ell_2 \} \geq L/2 \text{ and } \max\{\ell_1,\ell_2\} =L.
	\end{cases}
\end{equation}
\end{prop}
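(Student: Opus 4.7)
The proof rests on two structural facts: the perimeter identity \eqref{eq:identitypC}, which gives $P(C) = 4(|C \cap \Ve| - |C \cap \Vo|)$ for any odd cluster $C$, and the observation that $|\nabla A| = |\nabla A^c|$ for any $A \subseteq V$, so that a rhombus and its complement share the same perimeter whenever both can be written as a disjoint union of odd clusters. With these two ingredients, each of the four cases in~\eqref{eq:perimeter} reduces to a counting of even and odd sites.

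For the base case $\min\{\ell_1,\ell_2\} < L/2$ and $\max\{\ell_1,\ell_2\} < L$ the rhombus does not wrap around the torus. I would first verify that the parametrization $(k,j) \mapsto (\h_1 + k + j, \h_2 + k - j)$ from~\eqref{def:rectangle_odd_points} is injective modulo $L$: a collision would force $k+j \equiv k'+j' \pmod{L}$ and $k-j \equiv k'-j' \pmod{L}$, but the bounds $0 \leq k+j \leq \ell_1 + \ell_2 - 2$ and $|k-j| \leq \max\{\ell_1,\ell_2\} - 1$ are both strictly less than $L$, which rules such a coincidence out. This gives $|\cR_{\ell_1,\ell_2} \cap \Vo| = \ell_1\ell_2$. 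A parallel count for $\pap S_{\ell_1,\ell_2}$, parametrized by $(k,j) \in [[0,\ell_1]] \times [[0,\ell_2]]$ after an appropriate shift of the reference site, yields $|\cR_{\ell_1,\ell_2} \cap \Ve| = (\ell_1+1)(\ell_2+1)$, and substituting into~\eqref{eq:identitypC} gives $P = 4(\ell_1+\ell_2+1)$.

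The three remaining cases then follow by invoking \cref{lem:complrombo}. When $\min \geq L/2$ and $\max \leq L-2$, part~(i) identifies $\cR_{\ell_1,\ell_2}^c$ with $\cR_{L-\ell_1-1,\, L-\ell_2-1}$, whose two sides both lie in $[[0, L/2-1]]$ and therefore fall under the base case, giving $P = 4(2L-\ell_1-\ell_2-1)$. When $\min \geq L/2$ and $\max = L-1$, part~(ii) writes $\cR^c$ as a disjoint union of $L - \min\{\ell_1,\ell_2\}$ mutually non-adjacent odd sites, each being an isolated degenerate cluster of perimeter $4$; summation yields $P = 4(L - \min\{\ell_1,\ell_2\})$, which equals $4(2L-\ell_1-\ell_2-1)$ once one uses $\ell_1+\ell_2 = L-1+\min\{\ell_1,\ell_2\}$. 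Finally, when $\max = L$, parts~(iii) and~(iv) provide the site counts directly: in case~(iii) one reads $|\Ve \cap \cR| - |\Vo \cap \cR| = L$, hence $P = 4L$; in case~(iv) the rhombus equals $V$, so $|\Ve \cap \cR| = |\Vo \cap \cR|$ and $P = 0$.

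The main technical obstacle is the base case: proving the injectivity of the odd-site parametrization and computing the external-boundary count both require juggling coordinate sums and differences modulo $L$ with some care. Once the base case is in place, all the large-regime reductions are essentially immediate given \cref{lem:complrombo}, modulo verifying in the sub-case $\max = L-1$ that the isolated odd sites in the complement are pairwise non-adjacent so that their contour lengths add without overlap.
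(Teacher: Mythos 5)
Your overall route is the same as the paper's: establish the non-wrapping (base) case by counting $\ell_1\ell_2$ occupied odd sites and $(\ell_1+1)(\ell_2+1)$ even sites and applying \eqref{eq:identitypC}, then reduce the three remaining regimes to \cref{lem:complrombo} together with the fact that a set and its complement in $V$ have the same edge boundary, hence the same perimeter. Your handling of $\max\{\ell_1,\ell_2\}=L-1$ and $\max\{\ell_1,\ell_2\}=L$ is even slightly more explicit than the paper's citation of \cref{lem:complrombo}(ii)--(iv), and it is correct: the isolated odd sites forming the complement are automatically pairwise non-adjacent because they all lie in $\Vo$.

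The one step that does not hold as written is the injectivity check in the base case. You argue that a modular collision is impossible because $k+j\leq \ell_1+\ell_2-2$ and $|k-j|\leq\max\{\ell_1,\ell_2\}-1$ are both strictly less than $L$; but under $\min\{\ell_1,\ell_2\}<L/2$, $\max\{\ell_1,\ell_2\}<L$ one can have $\ell_1+\ell_2-2\geq L$ (for instance $L=8$, $\ell_1=3$, $\ell_2=7$, where $k+j$ reaches $8$), and in any case what must be bounded below $L$ is the \emph{difference} of two admissible values of $k+j$ (respectively of $k-j$), which ranges over an interval of width $\ell_1+\ell_2-2$ in both coordinates. So neither congruence alone forces equality, and your stated bounds do not close the argument. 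The conclusion is nevertheless true and the repair is short: adding and subtracting the congruences $k+j\equiv k'+j'$ and $k-j\equiv k'-j' \pmod L$ yields $2(k-k')\equiv 0$ and $2(j-j')\equiv 0 \pmod L$, i.e.\ congruence modulo $L/2$; the index running over the shorter side differs by at most $\min\{\ell_1,\ell_2\}-1<L/2$ (at most $\min\{\ell_1,\ell_2\}<L/2$ for the even-site count), so it is equal, and then the remaining congruence modulo $L$ with a gap of at most $L-2$ forces equality of the other index as well. This is precisely where the hypothesis $\min\{\ell_1,\ell_2\}<L/2$ enters; the paper sidesteps the explicit count by first proving that under these hypotheses the rhombus does not wind around the torus, which serves the same purpose.
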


\begin{proof}
First of all, we identify which of the conditions in \eqref{eq:perimeter} imply that a rhombus winds around the torus. Consider the rhombus $\cR_{\ell_1,\ell_2}(\eta)$ with $\eta=(\eta_1,\eta_2) \in V_\oo$. Let $\sigma=(\sigma_1,\sigma_2) \in S_{\ell_1,\ell_2}(\eta)$ be such that $\sigma_2=\eta_2$ and $d(\eta_1,\sigma_1)$ is the maximal distance along that horizontal axis. Similarly, let $\xi=(\xi_1,\xi_2) \in S_{\ell_1,\ell_2}(\eta)$ be such that $d(\eta_2,\xi_2)$ is the maximal distance along the vertical axis. Recalling \eqref{def:rectangle_odd_points}, since $\sigma_2=\eta_2$, we have that $k=j$ for any $k,j=1,...,\ell_{\min}-1$, where $\ell_{\min}=\min \{\ell_1,\ell_2\}$. Thus, we obtain 
\begin{equation}
\sigma_1=\eta_1+k+j=\eta_1+2(\ell_{\min}-1),
\end{equation}
where the last equality follows from the fact that the maximal distance along the horizontal axis is precisely the distance between $\sigma_1$ and $\eta_1$. We note that if $d(\eta_1,\sigma_1) <L-2$ then the rhombus does not wind horizontally around the torus. Thus,
\begin{align}
d(\eta_1,\sigma_1)=2(\ell_{\min}-1)<L-2 \iff \ell_{\min}<\frac{L}{2}.
\end{align}
Now, let us consider the distance between $\eta_2$ and $\xi_2$. Let $\ell_{\max}=\max\{\ell_1,\ell_2\}$. In this case, if $d(\eta_2,\xi_2) \leq L-2$ then the rhombus does not wind vertically around the torus. Thus, we have
\begin{equation}
d(\eta_2,\xi_2)=\max_{k,j}|\eta_2-(\eta_2-k+j)|=\ell_{\max}-1 \leq L-2.
\end{equation}
We conclude that if $\ell_{\min}<\frac{L}{2}$ and $\ell_{\max}<L$, then the rhombus does not wind around the torus and the perimeter is the length of its external boundary. In view of \eqref{eq:identitypC}, the claim follows. Otherwise, there are three cases, which will be treated separately:
\begin{itemize}
    \item[(a)] $\ell_{\min} \geq \frac{L}{2}$ and $\ell_{\max} \leq L-2$;
    \item[(b)] $\ell_{\min} \geq \frac{L}{2}$ and $\ell_{\max} > L-2$;
    \item[(c)] $\ell_{\min}<\frac{L}{2}$ and $\ell_{\max}=L$.
\end{itemize}

(a) Consider the complement of the rhombus $\cR_{\ell_1,\ell_2}(\eta)$ for some $\eta$. By virtue of \cref{lem:complrombo}(i), we know that its complement in $V$ is a rhombus with side lengths $\tilde \ell_1=L-\ell_1-1$ and $\tilde \ell_2=L-\ell_1-1$. We claim that this complementary rhombus does not wind around the torus. By using the condition $\ell_{\min} \geq \frac{L}{2}$, we have that the maximal side length of the complementary rhombus is
\begin{equation}
    \max\{\tilde \ell_1, \tilde \ell_2\}=\max\{L-\ell_1-1,L-\ell_2-1\}=L-\ell_{\min}-1 \leq L-\frac{L}{2}-1 <L,
\end{equation}
that is, $\max\{\tilde \ell_1, \tilde \ell_2\} < L$. Moreover, the minimal side length is
\begin{equation}
    \min\{\tilde \ell_1, \tilde \ell_2\}=\min\{L-\ell_1-1,L-\ell_2-1\}=L-\ell_{\max}-1 \leq L-\ell_{\min}-1 \leq L-\frac{L}{2}-1,
\end{equation}
that is, $\min\{\tilde \ell_1, \tilde \ell_2\}<\frac{L}{2}$. Since the perimeter of the rhombus $R_{\ell_1,\ell_2}$ is the same as that of $R_{L-\ell_1-1,L-\ell_2-1}$, the claim follows from \eqref{eq:identitypC}.

(b) The claim follows from \cref{lem:complrombo}(ii)--(iii) and \eqref{eq:identitypC}.

(c) The claim follows from \cref{lem:complrombo}(iv) and \eqref{eq:identitypC}.
\end{proof}

\begin{figure}[h!]
\centering
    \begin{tikzpicture}[scale=0.55]
    \newcommand\radius{0.2}
    \newcommand\listodd{(-2,0),(-1,-1),(1,3),(2,4),(-1,3),(0,4),(2,2),(2,6),(1,7),(0,6),(3,5),(4,4),(1,-1),(2,0),(3,1),(4,2),(3,3),(5,3),(-2,2),(-3,3),(0,-2),(4,6),(3,7),(4,8)}
    \newcommand\listeven{(-1,0),(-2,1),(-3,0),(-2,-1),(0,-1),(-1,-2),(3,0),(4,1),(3,0),(2,3),(1,4),(0,3),(1,2),(3,4),(2,5),(-1,4),(-2,3),(-1,2),(0,5),(3,2),(2,1),(3,6),(2,7),(1,6),(1,8),(0,7),(-1,6),(4,5),(5,4),(4,3),(2,-1),(1,0),(1,-2),(5,2),(6,3),(-3,2),(-3,4),(-4,3),(0,-3),(5,6),(4,7),(3,8),(5,8),(4,9)}
    \newcommand\listantiknob{(5,7),(5,5),(1,5),(-1,5),(-2,4),(0,2),(1,1),(-1,1),(-3,1),(0,0),(2,8)}
    
    \draw[thick,red,fill=red,opacity=0.4] (4.5,6.5) rectangle (5.5,7.5);
    \draw[thick,red,fill=red,opacity=0.4] (4.5,4.5) rectangle (5.5,5.5);
    \draw[thick,red,fill=red,opacity=0.4] (0.5,4.5) rectangle (1.5,5.5);
    \draw[thick,red,fill=red,opacity=0.4] (-1.5,4.5) rectangle (-0.5,5.5);
    \draw[thick,red,fill=red,opacity=0.4] (-2.5,3.5) rectangle (-1.5,4.5);
    \draw[thick,red,fill=red,opacity=0.4] (-0.5,1.5) rectangle (0.5,2.5);
    \draw[thick,red,fill=red,opacity=0.4] (0.5,0.5) rectangle (1.5,1.5);
    \draw[thick,red,fill=red,opacity=0.4] (-1.5,0.5) rectangle (-0.5,1.5);
    \draw[thick,red,fill=red,opacity=0.4] (-3.5,0.5) rectangle (-2.5,1.5);
    \draw[thick,red,fill=red,opacity=0.4] (-0.5,-0.5) rectangle (0.5,0.5);
    \draw[thick,red,fill=red,opacity=0.4] (1.5,7.5) rectangle (2.5,8.5);
        
    \draw[help lines] (-4.5,-3.5) grid (9.5,10.5);
        
    \draw[thick] (-3.5,-0.5)--(-2.5,-0.5)--(-2.5,-1.5)--(-1.5,-1.5)--(-1.5,-2.5)--(-0.5,-2.5)--(-0.5,-3.5) -- (0.5,-3.5)--(0.5,-2.5)--(1.5,-2.5)--(1.5,-1.5)--(2.5,-1.5)--(2.5,-0.5)--(3.5,-0.5)--(3.5,0.5)--(4.5,0.5)--(4.5,1.5)--(5.5,1.5)--(5.5,2.5)--(6.5,2.5)--(6.5,3.5)--(5.5,3.5)--(5.5,4.5)--(4.5,4.5)--(4.5,5.5)--(5.5,5.5)--(5.5,6.5)--(4.5,6.5)--(4.5,7.5)--(5.5,7.5)--(5.5,8.5)--(4.5,8.5)--(4.5,9.5)--(3.5,9.5)--(3.5,8.5)--(2.5,8.5)--(2.5,7.5)--(1.5,7.5)--(1.5,8.5)--(0.5,8.5)--(0.5,7.5)--(-0.5,7.5)--(-0.5,6.5)--(-1.5,6.5)--(-1.5,5.5)--(-0.5,5.5)--(-0.5,4.5)--(-1.5,4.5)--(-1.5,3.5)--(-2.5,3.5)--(-2.5,4.5)--(-3.5,4.5)--(-3.5,3.5)--(-4.5,3.5)--(-4.5,2.5)--(-3.5,2.5)--(-3.5,1.5)--(-2.5,1.5)--(-2.5,0.5)--(-3.5,0.5)--(-3.5,-0.5);
    
    \draw[thick] (-0.5,0.5)--(-0.5,-0.5)--(0.5,-0.5)--(0.5,0.5)--(1.5,0.5)--(1.5,1.5)--(0.5,1.5)--(0.5,2.5)--(-0.5,2.5)--(-0.5,1.5)--(-1.5,1.5)--(-1.5,0.5)--(-0.5,0.5);
    \draw[thick] (0.5,4.5)--(1.5,4.5)--(1.5,5.5)--(0.5,5.5)--(0.5,4.5);

    \draw[thin,blue] (1.75,-1.25)--(1.25,-1.75)--(-3.75,3.25)--(-3.25,3.75)--(1.75,-1.25);
    \draw[thin,blue] (2.75,-0.25)--(2.25,-0.75)--(-1.75,3.25)--(-1.25,3.75)--(2.75,-0.25);
    \draw[thin,blue] (4.75,1.75)--(4.25,1.25)--(-0.75,6.25)--(-0.25,6.75)--(4.75,1.75);

    \foreach \c in \listodd
        \draw[fill=black] \c circle(\radius);
    \foreach \c in \listeven
        \draw[fill=white] \c circle(\radius);
    
\end{tikzpicture}
\hspace{1cm}
\begin{tikzpicture}[scale=0.55]
    \newcommand\radius{0.2}
    \newcommand\listodd{(-2,0),(-1,-1),(1,3),(2,4),(-1,3),(0,4),(2,2),(2,6),(1,7),(0,6),(3,5),(4,4),(1,-1),(2,0),(3,1),(4,2),(3,3),(5,3),(-2,2),(-3,3),(0,-2),(4,6),(3,7),(4,8)}
    \newcommand\listeven{(-1,0),(-2,1),(-3,0),(-2,-1),(0,-1),(-1,-2),(3,0),(4,1),(3,0),(2,3),(1,4),(0,3),(1,2),(3,4),(2,5),(-1,4),(-2,3),(-1,2),(0,5),(3,2),(2,1),(3,6),(2,7),(1,6),(1,8),(0,7),(-1,6),(4,5),(5,4),(4,3),(2,-1),(1,0),(1,-2),(5,2),(6,3),(-3,2),(-3,4),(-4,3),(0,-3),(5,6),(4,7),(3,8),(5,8),(4,9)}
    
    \draw[help lines] (-4.5,-3.5) grid (9.5,10.5);
        
    \draw[thick] (-3.5,-0.5)--(-2.5,-0.5)--(-2.5,-1.5)--(-1.5,-1.5)--(-1.5,-2.5)--(-0.5,-2.5)--(-0.5,-3.5) -- (0.5,-3.5)--(0.5,-2.5)--(1.5,-2.5)--(1.5,-1.5)--(2.5,-1.5)--(2.5,-0.5)--(3.5,-0.5)--(3.5,0.5)--(4.5,0.5)--(4.5,1.5)--(5.5,1.5)--(5.5,2.5)--(6.5,2.5)--(6.5,3.5)--(5.5,3.5)--(5.5,4.5)--(4.5,4.5)--(4.5,5.5)--(5.5,5.5)--(5.5,6.5)--(4.5,6.5)--(4.5,7.5)--(5.5,7.5)--(5.5,8.5)--(4.5,8.5)--(4.5,9.5)--(3.5,9.5)--(3.5,8.5)--(2.5,8.5)--(2.5,7.5)--(1.5,7.5)--(1.5,8.5)--(0.5,8.5)--(0.5,7.5)--(-0.5,7.5)--(-0.5,6.5)--(-1.5,6.5)--(-1.5,5.5)--(-0.5,5.5)--(-0.5,4.5)--(-1.5,4.5)--(-1.5,3.5)--(-2.5,3.5)--(-2.5,4.5)--(-3.5,4.5)--(-3.5,3.5)--(-4.5,3.5)--(-4.5,2.5)--(-3.5,2.5)--(-3.5,1.5)--(-2.5,1.5)--(-2.5,0.5)--(-3.5,0.5)--(-3.5,-0.5);
    
    \draw[thick] (-0.5,0.5)--(-0.5,-0.5)--(0.5,-0.5)--(0.5,0.5)--(1.5,0.5)--(1.5,1.5)--(0.5,1.5)--(0.5,2.5)--(-0.5,2.5)--(-0.5,1.5)--(-1.5,1.5)--(-1.5,0.5)--(-0.5,0.5);
    \draw[thick] (0.5,4.5)--(1.5,4.5)--(1.5,5.5)--(0.5,5.5)--(0.5,4.5);

    \draw[red,thick] (2.5,10.5)--(2.5,9.5)--(1.5,9.5)--(1.5,8.5)--(0.5,8.5)--(0.5,7.5)--(-0.5,7.5)--(-0.5,6.5)--(-1.5,6.5)--(-1.5,5.5)--(-2.5,5.5)--(-2.5,4.5)--(-3.5,4.5)--(-3.5,3.5)--(-4.5,3.5)--(-4.5,2.5);

    \draw[thick,red] (9.5,2.5)--(8.5,2.5)--(8.5,1.5)--(9.5,1.5);

    \draw[thin,blue] (0.75,-2.25)--(0.25,-2.75)--(-2.75,0.25)--(-2.25,0.75)--(0.75,-2.25);
    \draw[thin,blue] (3.75,0.75)--(3.25,0.25)--(-0.75,4.25)--(-0.25,4.75)--(3.75,0.75);
    \draw[thin,dartmouthgreen] (5.25,2.25)--(5.75,2.75)--(0.75,7.75)--(0.25,7.25)--(5.25,2.25);
    \draw[thin,blue] (4.75,5.75)--(4.25,5.25)--(2.25,7.25)--(2.75,7.75)--(4.75,5.75);
    \draw[thin,blue] (4.75,7.75)--(4.25,7.25)--(3.25,8.25)--(3.75,8.75)--(4.75,7.75);

    \draw[thick,red] (-4.5,1.5)--(-4.5,0.5)--(-3.5,0.5)--(-3.5,-0.5)--(-2.5,-0.5)--(-2.5,-1.5)--(-1.5,-1.5)--(-1.5,-2.5)--(-0.5,-2.5)--(-0.5,-3.5)--(0.5,-3.5)--(0.5,-2.5)--(1.5,-2.5)--(1.5,-1.5)--(2.5,-1.5)--(2.5,-0.5)--(3.5,-0.5)--(3.5,0.5)--(4.5,0.5)--(4.5,1.5)--(5.5,1.5)--(5.5,2.5)--(6.5,2.5)--(6.5,3.5)--(7.5,3.5)--(7.5,4.5)--(8.5,4.5)--(8.5,5.5)--(7.5,5.5)--(7.5,6.5)--(6.5,6.5)--(6.5,7.5)--(5.5,7.5)--(5.5,8.5)--(4.5,8.5)--(4.5,9.5)--(3.5,9.5)--(3.5,10.5)--(2.5,10.5);
        
    \foreach \c in \listodd
        \draw[fill=black] \c circle(\radius);
    \foreach \c in \listeven
        \draw[fill=white] \c circle(\radius);
    
\end{tikzpicture}
\caption{Example of a odd cluster $C$ (on the left) and its surrounding rhombus $\cR(C)=\cR_{8,5}$ in red (on the right). On the left, the red squares contain the antiknobs and the decreasing broken diagonals are highlighted with blue rectangles. On the right, we highlight the decreasing shorter (resp.\ complete) diagonals with blue (resp.\ green) rectangles.}
\label{fig:cluster}
\end{figure}
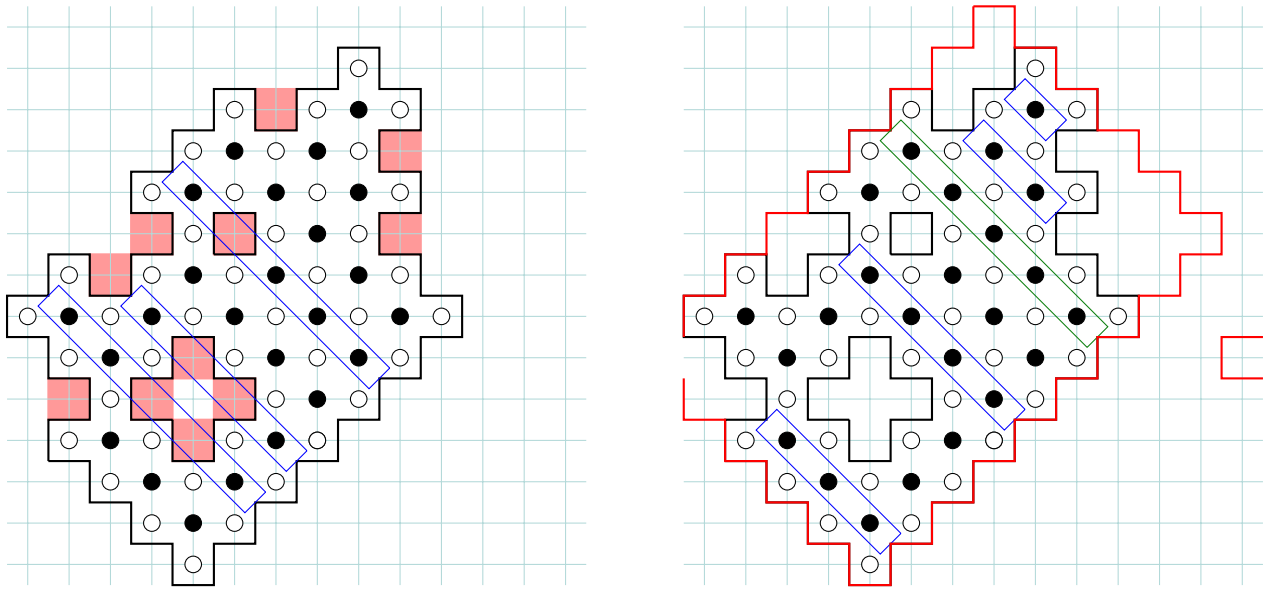
\FloatBarrier

We say that an odd cluster $C$ is \textit{monotone} when its perimeter coincides with that of its surrounding rhombus $\cR(C)$ if it does not wind around the torus, i.e., $P(C)=P(\cR(C))$. Otherwise, we say that an odd cluster $C$ is monotone when its perimeter coincides with that of a bridge, i.e., $P(C)=4L$. Note that it immediately follows that a monotone odd cluster $C$ has no \textit{holes}, i.e., empty odd sites with the four even neighboring sites belonging to $C$. An empty odd site $\h\notin C$ is an \textit{antiknob for the cluster} $C$ if it has at least three neighboring even empty sites that belong to $C$. \cref{fig:cluster} (left) highlights in red the antiknobs of a hard-core configuration.

Given an odd cluster $C\in C_\oo(\Lambda)$ and an integer $k\geq1$, we say that $C$ displays an \textit{increasing} (resp.\ \textit{decreasing}) \textit{diagonal broken in $k$ sites} if there exist a sequence of sites $z_i=(x_i,y_i)\in V_\oo\setminus C$, $i=1,...,k$, such that 
\begin{itemize}
    \item $x_{i+1}=x_i+1$ and $y_{i+1}=y_i+1$ (resp.\ $x_{i+1}=x_i+1$ and $y_{i+1}=y_i-1$) for any $i=1,...,k-1$, and 
    \item the two odd sites $(x_1-1,y_1-1)$ and $(x_k+1,y_k+1)$ (resp.\ $(x_1-1,y_1+1)$ and $(x_k+1,y_k-1)$) belong to the cluster $C$. 
\end{itemize}

By construction of a broken diagonal, the two sites $z_1$ and $z_k$ are always antiknobs. If it does not matter if an increasing or decreasing diagonal is broken, we simply say that a diagonal is broken. Broken diagonals are visualized in blue in~\cref{fig:cluster} (left).
Given an odd cluster $C\in C_\oo(\Lambda)$ and an integer $k\geq1$, we say that $C$ displays an \textit{increasing} (resp.\ \textit{decreasing}) \textit{shorter diagonal} lacking in $k$ sites if there exist a sequence of sites $z_i=(x_i,y_i)\in (V_\oo\cap\cR(C))\setminus C$, $i=1,...,k$, such that 
\begin{itemize}
    \item $x_{i+1}=x_i+1$ and $y_{i+1}=y_i+1$ (resp.\ $x_{i+1}=x_i+1$ and $y_{i+1}=y_i-1$) for any $i=1,...,k-1$, and 
    \item the two odd sites $(x_1-1,y_1-1)$ and $(x_k+1,y_k+1)$ (resp.\ $(x_1-1,y_1+1)$ and $(x_k+1,y_k-1)$) do not belong to $\cR(C)$.
\end{itemize}
\cref{fig:cluster} (right) highlights the shorter diagonals in blue.

\subsection{Expanding an odd cluster: The filling algorithms}
\label{sec:filling}
We now describe an iterative procedure that builds a path $\omega$ in $\cX$ from a configuration $\sigma$ with a unique odd cluster to another configuration $\s'$ that (i) displays a rhombus, and (ii) whose energy $H(\s')$ is equal to or lower than $H(\sigma)$. 

The path $\omega$ can be described as the concatenation of two paths, each obtained by means of a specific \textit{filling algorithm}. The reason behind this name is that, along the generated paths, any incomplete diagonal in the odd cluster of the starting configuration is gradually filled by adding particles in odd sites until a rhombus is obtained. 

The two paths can be intuitively described as follows. The first path, denoted as $\tilde\omega$, starts from a configuration with at least one broken diagonal and, by filling one by one all broken diagonals in lexicographic order, arrives at a configuration with no broken diagonal. Each broken diagonal is progressively filled by removing a particle in the even site at distance 1 from a antiknob that lies on that diagonal and adding a particle in the odd site where the antiknob is. The antiknobs on the same diagonal are processed in lexicographic order. The second path, denoted as $\bar\omega$, starts from a configuration with no broken diagonal (such as any ending configuration of the path $\tilde\omega$) and arrives at a configuration displaying an odd rhombus. The construction of this second path is similar to that of the first path, but in this case the particles are added in odd sites to fill all the shorter diagonals. 

The filling algorithms generating the two paths $\tilde\o$ and $\bar\o$ are designed in such a way that the maximum energy along the resulting path $\omega=\tilde\o\cup\bar\o$, i.e., $\Phi_{\omega}$, is never larger than $H(\sigma)+1$. More specifically, the perimeter of the odd cluster either decreases or does not change along $\tilde\omega$, whereas it can increase and sequentially decrease by the same quantity along $\bar\omega$. \cref{lem:expansion}, whose proof is postponed to \cref{sec:oddproof}, specifies the requirement for the starting configuration and summarized the properties of the path generated by the filling algorithm. 

To formally define these two algorithms, we introduce the following notation. Given two configurations $\s,\s' \in \cX$ and a subset of sites $W \subset \Lambda$, we write $\s_{|W} = \s'_{|W}$ if $\s(v) = \s'(v)$ for every $v \in W$. Given a configuration $\s \in \cX$, we let
\begin{itemize}
    \item $\s^{(v,0)}$ be the configuration $\s' \in \cX$ such that $\s'_{|V \setminus \{v\}} = \s_{|V \setminus \{v\}}$ and $\s'(v)=0$; and
    \item $\s^{(v,1)}$ be the configuration $\s'$ such that $\s'_{|V \setminus \{v\}} = \s_{|V \setminus \{v\}}$ and $\s'(v)=1$. 
\end{itemize}    
In general, $\s^{(v,1)}$ might not be a hard-core configuration in $\cX$, since $\sigma$ may already have a particle residing in one of the four neighboring sites of $v$. 

\cref{alg:fill1} (resp.\ \cref{alg:fill2}) provide the detailed pseudocode for the filling algorithm that yields $\tilde\omega$ (resp.\ $\bar\omega$). 

\begin{algorithm}[!h]
\caption{Filling algorithm to build path $\tilde\omega$} \label{alg:fill1}
\KwData{a configuration $\s\in\cX$ consisting of a unique odd cluster $C$ with $m\geq1$ broken diagonals such that the $j$-th is broken in $k_j$ sites for $j=1,...,m$}
\KwResult{$\tilde\omega:\s\to\tilde\s$, with the configuration $\tilde\s\in\cX$ consisting of a unique odd cluster with no broken diagonals}

$\sigma_0=\sigma$\;
\For{$j=1,...,m$}{
$\sigma_{j}=\sigma_{j-1}$\;
\For{$i=1,...,k_j$}{

Consider the $i$-th antiknob (in lexicographic order) of the $j$-th broken diagonal in $C$ and denote it by $x_{j,i}\in V_\oo$

\eIf{$x_{j,i}$ has a neighboring occupied site $\tilde x_{j,i}\in V_\ee$}{
$\tilde\sigma_{j,i}=\sigma_j^{(\tilde x_{j,i},0)}$;

$\sigma_{j,i}=\tilde\sigma_{j,i}^{(x_{j,i},1)}$;
}
{
$\tilde\sigma_{j,i}=\sigma_j^{(x_{j,i},1)}$;

$\sigma_{j,i}=\tilde\sigma_{j,i}$;
}
$\sigma_{j+1}=\sigma_{j,k_j}$;

$\tilde\omega_{j,i}=(\sigma_j,\tilde\sigma_{j,1},\sigma_{j,1},...,\tilde\sigma_{j,i},\sigma_{j,i})$;
}
$\tilde\omega_j=\tilde\omega_{j,k_j}$;

$\sigma_{j+1}=\sigma_{j,k_j}$;
}
$\tilde\sigma=\sigma_m$;

$\tilde\omega$ is the concatenation of the paths $\tilde\omega_1$,..., $\tilde\omega_m$
\end{algorithm}

\begin{algorithm}[!h]
\caption{Filling algorithm to build path $\bar\omega$} \label{alg:fill2}
\KwData{a configuration $\s\in\cX$ consists of a unique odd cluster $C$ with no broken diagonal and $m\geq0$ increasing shorter diagonals. The quantity $k_j$ is the difference between the length of the shorter diagonal and the corresponding one of the surrounding rhombus for $j=1,...,m$}
\KwResult{$\bar\omega:\s\to\bar\s$, with the configuration $\bar\s\in\cX$ having a unique odd cluster, which is a rhombus}

\eIf{$m=0$}{
$\bar\sigma=\sigma$ and $\bar\omega$ is trivial;
}
{
$\sigma_0=\sigma$\;
\For{$j=1,...,m$}{
$\sigma_{j}=\sigma_{j-1}$\;
\For{$i=1,...,k_j$}{

Consider the $i$-th empty odd site (in lexicographic order) not belonging to the $j$-th shorter diagonal in $C$ but in the corresponding complete diagonal of the surrounding rhombus and denote it by $x_{j,i}\in V_{\oo}$

\eIf{$x_{j,i}$ has a neighboring occupied site $\bar x_{j,i}\in V_\ee$}{
$\bar\sigma_{j,i}=\sigma_j^{(\bar x_{j,i},0)}$;

$\sigma_{j,i}=\bar\sigma_{j,1}^{(x_{j,i},1)}$;
}
{
$\bar\sigma_{j,i}=\sigma_j^{(x_{j,i},1)}$;

$\sigma_{j,i}=\bar\sigma_{j,i}$;
}
$\sigma_{j+1}=\sigma_{j,k_j}$;

$\bar\omega_{j,i}=(\sigma_j,\bar\sigma_{j,1},\sigma_{j,1},...,\bar\sigma_{j,i},\sigma_{j,i})$;
}
$\bar\omega_j=\bar\omega_{j,k_j}$;

$\sigma_{j+1}=\sigma_{j,k_j}$;
}
$\bar\sigma=\sigma_m$;

Obtain $\bar\omega$ as the concatenation of the paths $\bar\omega_1$,..., $\bar\omega_m$
}
\end{algorithm}

\begin{prop}[Odd cluster expansion via filling algorithms] \label{lem:expansion}
Let $\s,\s' \in \cX$ be two hard-core configurations on $\L$, $\s \neq \s'$, and $\cR$ a rhombus such that
\begin{itemize}
    \item[(i)] There exists a connected odd cluster $C \subseteq \cO(\s)$ such that $\cR(C) = \cR$;
	\item[(ii)] $\s_{|\L \setminus \cR} = \s'_{| \L \setminus \cR}$;
	\item[(iii)] $\s'_{|\cR} = \oo_{|\cR}$.
\end{itemize}
Then, there exists a path $\o: \s \to \s'$ such that $\Phi_\o - H(\s) \leq 1$. In addition, if $C$ has at least one broken diagonal then $P(\sigma)>P(\sigma')$, otherwise $P(\sigma)=P(\sigma')$. 
\end{prop}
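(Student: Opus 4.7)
The plan is to construct the path $\omega\colon\sigma\to\sigma'$ as the concatenation $\omega=\tilde\omega\cup\bar\omega$, where $\tilde\omega$ is the path produced by \cref{alg:fill1} applied to $\sigma$ (which fills every broken diagonal of $C$ in lexicographic order) and $\bar\omega$ is the path produced by \cref{alg:fill2} applied to the last configuration of $\tilde\omega$ (which completes the surrounding rhombus $\cR$ by filling its shorter diagonals). Conditions (ii) and (iii) of the statement guarantee that every modification takes place inside $\cR$, and condition (i) ensures that $\cR=\cR(C)$ is an admissible target shape, so that the final configuration of $\omega$ agrees with $\sigma'$ on all of $\L$.

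The first task is to verify that each micro-step of both algorithms is a legitimate single-site Metropolis move that preserves the hard-core constraint. By the definition of antiknob, the odd site $x_{j,i}$ being filled has at least three empty even neighbors belonging to the current cluster, hence at most one of its four even neighbors is occupied. If such an occupied neighbor $\tilde x_{j,i}\in V_\ee$ exists, the algorithm first removes it (raising the energy by $1$) and then adds a particle at $x_{j,i}$ (lowering the energy by $1$). Otherwise the particle at $x_{j,i}$ can be added directly, strictly decreasing the energy by $1$. In both cases the intermediate configuration exceeds the current energy by at most $1$, and since every micro-step has non-positive net energy change, the energy stays below $H(\sigma)+1$ along the whole path, giving $\Phi_\omega-H(\sigma)\leq 1$.

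For the perimeter statement I would invoke the identity $P(\sigma)=4\,\D H(\sigma)$ in \eqref{eq:contour}, so that it suffices to compare $H(\sigma')$ with $H(\sigma)$. If $C$ has no broken diagonal, then $\tilde\omega$ is trivial and $\bar\omega$ only fills sites on shorter diagonals. The endpoints of such a shorter diagonal lie outside $\cR(C)$, and exploiting the minimality of $\cR(C)$ one shows that every such addition must be accompanied by the removal of a neighboring occupied even particle, so each micro-step is energy-neutral and $H(\sigma')=H(\sigma)$, yielding $P(\sigma)=P(\sigma')$. If instead $C$ has at least one broken diagonal, I would argue that at least one of the antiknobs along it has all four even neighbors in $C$: the hard-core constraint forces the even sites sandwiched between the antiknob and the endpoints $(x_1-1,y_1-1)$, $(x_k+1,y_k+1)$ of the broken diagonal to be empty, and the $E^*$-connectivity of the even part of $C$ places them in $C$. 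The corresponding fill step is then a pure addition that strictly lowers the energy by $1$, and since no other micro-step raises the energy, we conclude $H(\sigma')<H(\sigma)$ and thus $P(\sigma)>P(\sigma')$.

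The main obstacle lies in the geometric verification of the two claims used in the perimeter analysis: (a) that along any broken diagonal at least one antiknob has all four even neighbors in $C$, and (b) that every fill along a shorter diagonal must be paired with the removal of an even particle. Both facts are visually apparent from \cref{fig:cluster}, but a rigorous proof requires combining the definitions of broken and shorter diagonals with the minimality of the surrounding rhombus $\cR(C)$ and a careful case analysis of the relative positions of the filled site, the endpoints of the diagonal, and the boundary of $\cR(C)$; cases where the diagonals are short or sit near the boundary of $\cR(C)$ will need separate treatment. This is where the bulk of the technical work is concentrated.
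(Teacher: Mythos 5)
Your overall construction coincides with the paper's: concatenate the path $\tilde\omega$ of \cref{alg:fill1} with the path $\bar\omega$ of \cref{alg:fill2}, note that each addition of an odd particle is preceded by at most one removal of an even particle (so $\Phi_\omega-H(\sigma)\leq 1$), and then compare the two endpoints; your reduction of the perimeter comparison to an energy comparison via \eqref{eq:contour} is equivalent to the paper's direct bookkeeping with \eqref{eq:identitypC}, so that part is fine.

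The genuine gap is in your claim (a). You assert that, already in $\sigma$, every broken diagonal contains an antiknob all four of whose even neighbors belong to $C$, because ``the hard-core constraint forces the even sites sandwiched between the antiknob and the endpoints to be empty''. That argument only controls the even sites adjacent to the two occupied endpoints $(x_1-1,y_1-1)$ and $(x_k+1,y_k+1)$, i.e.\ it proves the claim only when the diagonal is broken in $k=1$ site. For $k\geq 2$ the even sites lying between two consecutive empty odd sites $z_i,z_{i+1}$ are adjacent only to empty odd sites, so hard-core does not force them to be empty: already for $k=2$ one of the two even sites between $z_1$ and $z_2$ may carry a particle, in which case each of $z_1,z_2$ has exactly three even neighbors in $C$ and no antiknob on that diagonal satisfies your static property. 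The fact you need is dynamic, and this is how the paper argues: process the diagonal in order; every earlier fill is energy/perimeter-neutral (one even removal followed by one odd addition) or better, and since even sites are never re-occupied along the path, when the algorithm reaches the \emph{last} empty site of the diagonal its four even neighbors (two shared with the just-filled neighbor $z_{k-1}$, two shared with the occupied endpoint) are all empty, so that final step is a pure addition, lowering the energy by $1$ and the perimeter by $4$. This yields $P(\sigma)\geq P(\sigma')+4m>P(\sigma')$ when there are $m\geq1$ broken diagonals and repairs your argument. Your claim (b) for the shorter diagonals is the same assertion the paper makes (the site to be filled has exactly three even neighbors in the current cluster, so each addition is paired with one removal), and it is precisely the absence of broken diagonals, in both directions, that rules out a pure addition there.
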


We note that conditions $(i),(ii)$, and $(iii)$ mean that there is a unique odd cluster in $\sigma$ different from a rhombus, i.e., there exists at least one broken or shorter diagonal. 

Thanks to \cref{lem:expansion}, we are able to characterize the configurations having minimal perimeter for a fixed number of occupied odd sites. This finding is formalized in the following two results, \cref{lem:minP} and \cref{cor:mortadella}, whose proofs are deferred to \cref{sec:oddproof}.

To state the precise results, we first introduce the notion of \textit{bars} as follows. We define a vertical (resp.\ horizontal) bar $B$ of length $k$ as the union of the particles arranged in odd sites $x_1,...,x_k$ belonging to the same column (resp.\ row) such that $d(x_i,x_{i+1})=2$ for any $i=1,...,k-1$. In the case of $k=1$, we will refer to it as \textit{protuberance}. If on the same column (resp.\ row) there are $m$ disjoint vertical (resp.\ horizontal) bars, each of them of length $k_i$, we say that the total length of the bars is $k=k_1+...+k_m$. Similarly, we can define a \textit{diagonal bar} and note that it can correspond to a shorter or complete diagonal. If it does not matter if the bar is vertical, horizontal, or diagonal, we simply refer to it as a \textit{bar}. Finally, we will say that a bar $B$ is attached to a cluster $C$ when all the particles belonging to $B$ are at distance two from $C$. Note that in~\cref{fig:cluster} (right) the shorter diagonals of lengths three and one are diagonal bars. See~\cref{fig:cluster} (left) for examples of vertical bars.

\begin{lem}\label{lmm:pancetta}
For any $n$ positive integer there exist two positive integers $s$ and $k$, with $0\leq k<s$, such that either (i) $n=s(s-1)+k$ or (ii) $n=s^2+k$.
\end{lem}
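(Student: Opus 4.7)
The plan is to observe that the intervals $[s(s-1), s^2)$ and $[s^2, s(s+1))$ for $s\geq 1$ each have length $s$, and together they tile the set of non-negative integers: $[0,1),[1,2),[2,4),[4,6),[6,9),[9,12),\dots$. Form (i) with parameter $s$ realizes integers in the first type of interval and form (ii) with parameter $s$ realizes integers in the second type, so the claim reduces to locating $n$ in this partition.

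Concretely, I would set $s:=\lfloor\sqrt{n}\rfloor$, which is a positive integer since $n\geq 1$. By definition $s^2\leq n<(s+1)^2$, hence $0\leq n-s^2\leq 2s$. Then I would split into two cases:

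\textbf{Case 1:} $0\leq n-s^2<s$. Set $k:=n-s^2$. Then $n=s^2+k$ with $0\leq k<s$, which is form (ii).

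\textbf{Case 2:} $s\leq n-s^2\leq 2s$. Set $s':=s+1$ (still a positive integer) and $k:=n-s(s+1)=n-s'(s'-1)$. Then $0\leq k=(n-s^2)-s\leq s<s'$, so $n=s'(s'-1)+k$ with $0\leq k<s'$, which is form (i).

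The main (and essentially only) obstacle is checking the arithmetic of the boundary cases, and verifying that the bound $n-s^2\leq 2s$ from $n<(s+1)^2=s^2+2s+1$ gives $k\leq s<s+1=s'$ in Case 2, so that the constraint $k<s'$ in form (i) is satisfied. Since this is a straightforward counting statement with no interaction with the hard-core dynamics, no further structural input is needed.
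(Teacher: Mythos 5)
Your proof is correct. Note, however, that the paper itself does not prove this lemma at all: its ``proof'' is a one-line citation to the first part of Lemma 6.17 of the Cirillo--Nardi--Sohier paper \cite{Cirillo2013}, so there is no internal argument to compare against. Your argument supplies exactly the elementary content that the citation outsources: the observation that the intervals $[s(s-1),s^2)$ and $[s^2,s(s+1))$ tile the positive integers, made precise by taking $s=\lfloor\sqrt{n}\rfloor$ and splitting according to whether $n-s^2<s$ (form (ii)) or $s\leq n-s^2\leq 2s$ (form (i) with $s+1$ in place of $s$). The case analysis and the bound $n-s^2\leq 2s$ coming from $n<(s+1)^2$ are handled correctly, including the boundary value $k=0$ allowed by the constraint $0\leq k<s$. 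The only cosmetic caveat is that the lemma as stated asks for ``two positive integers $s$ and $k$'' while simultaneously allowing $k=0$; this is an imprecision of the statement itself (and is irrelevant for how the lemma is used in Proposition \ref{lem:minP} and Corollary \ref{cor:mortadella}), and your construction satisfies the intended requirement $0\leq k<s$ with $s\geq 1$. You prove existence only, not the uniqueness implicitly invoked later in the paper, but uniqueness is not part of the stated lemma, so no gap arises.
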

\begin{proof}
See the first part of \cite[Lemma 6.17]{Cirillo2013}.
\end{proof}

\begin{prop}[Perimeter-Minimal rhombi]\label{lem:minP} 
Consider $n\leq L(L-2)$ and let $s,k$ be the unique integers as in \cref{lmm:pancetta}. The set of odd clusters with area $n$ that have minimal perimeter contains either a rhombus $\cR_{s,s-1}$ or $\cR_{s-1,s}$ with a bar of length $k$ attached to one of its longest sides if $n=s(s-1)+k$ and a rhombus $\cR_{s,s}$ with a bar of length $k$ attached to one of its sides if $n=s^2+k$.
\end{prop}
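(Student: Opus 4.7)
I would prove \cref{lem:minP} by a matching lower/upper bound argument, combining the filling algorithms from \cref{lem:expansion} with the perimeter formula of \cref{conj:rhombus}.

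For the \emph{lower bound}, given an odd cluster $C$ of area $n$, let $\cR(C) = \cR_{\ell_1,\ell_2}$ be its surrounding rhombus, so that $\ell_1 \ell_2 \geq n$ since $V_\oo \cap C \subseteq V_\oo \cap \cR(C)$. Applying \cref{lem:expansion} with $\cR = \cR(C)$ and $\s'$ obtained from $\s$ by placing $\oo$ on $\cR(C)$ produces a path along which broken diagonals are first filled (strictly lowering the perimeter) and then shorter diagonals are filled (preserving it), giving $P(C) \geq P(\cR(C)) = 4(\ell_1 + \ell_2 + 1)$ in the non-winding regime of \cref{conj:rhombus}. Minimizing $\ell_1 + \ell_2$ over positive integers with $\ell_1 \ell_2 \geq n$ is then an elementary optimization: the minimum is $2s-1$ when $n = s(s-1)$, $2s$ when $s(s-1) < n \leq s^2$, and $2s+1$ when $s^2 < n \leq s(s+1)$, attained respectively by $(s,s-1)$, $(s,s)$ and $(s+1,s)$. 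This yields the lower bounds $P(C) \geq 8s,\ 8s+4,\ 8s+8$ in the three regimes.

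For the \emph{upper bound}, I would verify that each candidate cluster attains this lower bound by direct evaluation via the identity \eqref{eq:identitypC}. The crucial local computation is that attaching a bar of length $1 \leq k \leq s-1$ parallel to a length-$s$ side of a rhombus $\cR$ increases its perimeter by exactly $4$. Adding the bar sites one at a time along its length, the first site has exactly $e_v = 2$ even neighbors already in the cluster (both lying on the adjacent side of the rhombus), contributing $4(3-e_v) = +4$ to the perimeter, while each subsequent bar site has $e_v = 3$ (two from the adjacent rhombus side plus the one shared with the previously added bar site), contributing $0$. Combined with the perimeters $P(\cR_{s,s-1}) = 8s$ and $P(\cR_{s,s}) = 8s+4$ from \cref{conj:rhombus}, this yields respectively $8s+4$ and $8s+8$ for the candidates with a bar, in perfect agreement with the lower bounds.

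The \emph{main obstacle} is the local bar-attachment computation: one must verify that every bar site lies opposite a genuine rhombus site along the chosen longest side, which is exactly where the strict inequality $k < s$ in \cref{lmm:pancetta} is essential, since it rules out a bar site running past the end of the side (which would otherwise contribute an extra $+4$). A secondary technical point is that for $n$ close to the upper bound $L(L-2)$ the surrounding rhombus may wind around the torus, so one must also invoke the winding branches of \cref{conj:rhombus} in the lower-bound optimization and check that the candidate perimeters remain consistent with them; this is a finite case analysis that does not alter the overall argument.
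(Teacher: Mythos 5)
Your proposal follows the same skeleton as the paper's proof: the heart of both arguments is the reduction $P(C)\geq P(\cR(C))$ obtained from \cref{lem:expansion} (filling a broken diagonal strictly decreases the perimeter, filling a shorter diagonal leaves it unchanged), followed by identifying the cheapest rhombus that can host $n$ odd sites. Where you differ is in how this second half is finished: the paper identifies the minimizing rhombus by induction on $n$, using \eqref{eq:perimeter} to discard alternatives of the type $\cR_{s+1,s-1}$, and lets the attainment of the bound by the rhombus-plus-bar follow implicitly from the equality case of \cref{lem:expansion}; you instead minimize $\ell_1+\ell_2$ subject to $\ell_1\ell_2\geq n$ directly (your three regimes and the values $8s$, $8s+4$, $8s+8$ are correct) and verify attainment by the explicit local count $4(3-e_v)$ along the bar (first site $e_v=2$, later sites $e_v=3$, total $+4$), which is correct and in fact makes explicit a step the paper leaves to the reader, including the role of $k<s$ in keeping every bar site flush against the chosen side. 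One caution: your closing claim that the winding branches of \cref{conj:rhombus} only require ``a finite case analysis that does not alter the overall argument'' is optimistic. Once $s\geq L/2$ the surrounding rhombus (or the cluster itself) can wind around the torus, the identity $P(\cR_{\ell_1,\ell_2})=4(\ell_1+\ell_2+1)$ fails, and winding clusters such as $m$-uple bridges of area $n$ have perimeter $4L$, which is at most (and typically below) the rhombus-plus-bar value $8s+4$ in that regime; this is precisely why \cref{cor:mortadella} treats $L/2\leq s<L$ with a different inequality. That said, the paper's own proof of \cref{lem:minP} is no more careful on this point (it only excludes the $\max\{\ell_1,\ell_2\}=L$ degeneracy via $n\leq L(L-2)$), so in the non-winding regime $s<L/2$, where the proposition is actually invoked, your argument is complete and equivalent to the paper's.
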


\begin{cor}[Minimal perimeter]\label{cor:mortadella}
Consider $n\leq L(L-2)$ and let $s,k$ be the unique integers as in \cref{lmm:pancetta}. The perimeter $P$ of an odd cluster with area $n$ satisfies the following inequalities:
\[
    \Bigl(\frac{P}{4}-1\Bigr)^2\geq
    \begin{cases}
         4n &\hbox{ if } s<L/2, \\
         2(L^2-2n) &\hbox{ if } L/2\leq s<L.
    \end{cases} 
\]
In addition, for all $s<L$ we have that
\begin{equation}\label{eq:diavola}
    P\geq 4(2\sqrt{n}+1)
\end{equation}
and for $s<\frac{L}{2}$ the equality holds if and only if the odd cluster is the rhombus $\cR_{s,s}$.
\end{cor}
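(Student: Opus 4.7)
The plan is to deduce this isoperimetric estimate directly from Proposition~\ref{lem:minP}, which characterizes the shape of a minimum-perimeter cluster of area $n$, combined with Proposition~\ref{conj:rhombus}, which gives closed-form perimeter formulas for rhombi on the torus. Since the corollary asserts lower bounds on $P$ that must hold for any odd cluster of area $n$, it suffices to verify them on the explicit minimizers from Proposition~\ref{lem:minP}: a rhombus $\cR_{s,s-1}$ (or $\cR_{s-1,s}$) with a bar of length $k$ attached if $n=s(s-1)+k$, and $\cR_{s,s}$ with a bar of length $k$ attached if $n=s^2+k$.

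For the sub-case $s<L/2$, the relevant rhombi satisfy $\min\{\ell_1,\ell_2\}<L/2$ and $\max\{\ell_1,\ell_2\}<L$, so the first branch of Proposition~\ref{conj:rhombus} gives $P(\cR_{s,s-1})=8s$ and $P(\cR_{s,s})=8s+4$. For the attached bars, I would use identity~\eqref{eq:identitypC} to track the perimeter change particle by particle: the first bar particle attached along a side contributes $+4$ to $P$ (since two of its four even neighbors already belong to the odd region, so only two new even sites are brought in), whereas each subsequent particle on the same bar contributes $0$ (three of its four even neighbors are already present). This produces an explicit closed form for $P_{\min}(n)$ in each of the four sub-cases arising from Lemma~\ref{lmm:pancetta} (case (i) or (ii), each with $k=0$ or $k\geq 1$), after which $(P/4-1)^2\geq 4n$ is verified by a short algebraic computation in each sub-case; tracking when equality holds pins it down to case (ii) with $k=0$, whose unique minimizer (up to translation) is $\cR_{s,s}$, giving the equality clause in~\eqref{eq:diavola}.

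For the sub-case $L/2\leq s<L$, the rhombi wrap around the torus, so I would either appeal to the second branch of Proposition~\ref{conj:rhombus} together with Lemma~\ref{lem:complrombo} and run a parallel case-by-case algebraic verification, or else exploit a torus-duality argument: since $V\setminus C$ for an odd cluster $C$ can be viewed as an odd-type region of the same perimeter but of complementary area (the even particles of the dual configuration play the role of odd particles), applying the first-case bound to the complement (whose area is small precisely when $s\geq L/2$) and rearranging yields the second bound $(P/4-1)^2\geq 2(L^2-2n)$. Finally, $P\geq 4(2\sqrt{n}+1)$ follows from the first inequality by extracting the square root, using $P/4-1\geq 0$ for any non-empty cluster. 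The main technical difficulty lies in the careful bookkeeping of the perimeter change when bar particles are added across all four sub-cases of $s<L/2$, together with making the duality argument precise in the wrapping regime $s\geq L/2$ and verifying uniqueness of $\cR_{s,s}$ as the equality case.
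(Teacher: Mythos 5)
Your treatment of the first displayed inequality follows essentially the same route as the paper: reduce to the explicit minimizers of \cref{lem:minP}, compute their perimeters via \cref{conj:rhombus} together with the bookkeeping from \eqref{eq:identitypC} for the attached bar, and in the wrapping regime control $n$ through the complement (the paper does exactly this, bounding $n$ via the containment of the complement in a small rhombus given by \cref{lem:complrombo}, which is your ``duality'' argument in slightly different clothing). Two caveats on the details: your ``$+4$ for the first bar particle, $0$ afterwards'' accounting is only valid when the rhombus does not wind around the torus, so in the regime $L/2\leq s<L$ the bar bookkeeping has to be redone from scratch (the paper's stated perimeters there are not ``bare rhombus plus $4$''); and the ``only if'' part of the equality clause needs the uniqueness discussion inside the proof of \cref{lem:minP}, not just the closed-form perimeters of the minimizers.

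The genuine gap is in your derivation of \eqref{eq:diavola}. You assert that $P\geq 4(2\sqrt{n}+1)$ ``follows from the first inequality by extracting the square root'' for all $s<L$. That works only when the right-hand side of the first inequality is $4n$, i.e.\ for $s<L/2$. For $L/2\leq s<L$ the first inequality gives only $P\geq 4\bigl(\sqrt{2(L^2-2n)}+1\bigr)$, and $2(L^2-2n)\geq 4n$ is equivalent to $n\leq L^2/4$, which is generally false in this regime: there $n\geq s(s-1)\geq \frac{L}{2}\bigl(\frac{L}{2}-1\bigr)$ and can exceed $L^2/4$. So square-rooting does not yield \eqref{eq:diavola} in the wrapping case, and a separate argument is required; the paper closes this case precisely by invoking the standing hypothesis $n\leq L(L-2)$, a hypothesis your proposal never uses anywhere --- which is the telltale sign that this step is missing. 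You need either to supply the additional estimate relating $2(L^2-2n)$ (equivalently $(L-s-1)^2$ or $(L-s-1)(L-s-2)$) to $4n$ under the constraint on $n$, or to prove \eqref{eq:diavola} in the wrapping regime by a direct comparison of the minimizer's perimeter with $4(2\sqrt n+1)$, rather than deducing it from the first display.
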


Lastly, in the next lemma, we derive an isoperimetric inequality assuming the total number of odd and even occupied sites is fixed. To this end, we first define the \textit{real area} of a configuration $\sigma$ as
\begin{equation}\label{eq:prosciuttoefunghi}
\tilde n(\sigma)= o(\sigma)+\tilde e(\sigma),
\end{equation}
where $o(\s)$ (resp.\ $\tilde e(\sigma)$) denotes the number of occupied odd (resp.\ empty even) sites of the configuration $\sigma$. 

\begin{lem}[Perimeter-Minimal rhombi with fixed real area]\label{prop:maialona} 
Given $1\leq \ell<\frac{L}{2}$, the unique odd cluster with real area $\tilde n=2\ell^2+2\ell+1$ and minimal perimeter is the rhombus $\cR_{\ell,\ell}$. In particular, for $\tilde n= \frac{L^2}{2}-L+1$ this rhombus is $\cR_{\frac{L}{2}-1,\frac{L}{2}-1}$. 
\end{lem}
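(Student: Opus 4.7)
The plan is to convert the fixed-real-area constraint into a constraint on the number of odd occupied sites via the identity \eqref{eq:identitypC}, and then squeeze this number from both sides using Corollary~\ref{cor:mortadella}. First I would verify that the target rhombus is admissible: from \eqref{def:rectangle_odd_points} one has $|\cR_{\ell,\ell}\cap V_\oo|=\ell^2$, and combining $P(\cR_{\ell,\ell})=4(2\ell+1)$ (Proposition~\ref{conj:rhombus} in the regime $\min\{\ell_1,\ell_2\}<L/2$, $\max\{\ell_1,\ell_2\}<L$) with $P=4(|C\cap V_\ee|-|C\cap V_\oo|)$ gives $|\cR_{\ell,\ell}\cap V_\ee|=(\ell+1)^2$. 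Hence the real area of $\cR_{\ell,\ell}$ equals $\ell^2+(\ell+1)^2=2\ell^2+2\ell+1$, so the rhombus indeed belongs to the class under consideration.

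Next, let $C$ be any odd cluster with real area $\tilde n=2\ell^2+2\ell+1$, and set $o:=|C\cap V_\oo|$, $e:=|C\cap V_\ee|$. Since $o+e=\tilde n$, identity \eqref{eq:identitypC} yields
\[
P(C)=4(e-o)=4(2\ell^2+2\ell+1-2o).
\]
Requiring $P(C)\leq P(\cR_{\ell,\ell})=4(2\ell+1)$ immediately gives $o\geq \ell^2$. For the reverse bound, because $\ell<L/2$ we have $o\leq\ell^2<(L/2)^2\leq L(L-2)$ (using $L\geq 6$), so Corollary~\ref{cor:mortadella} applies and yields $P(C)\geq 4(2\sqrt{o}+1)$; combined with $P(C)\leq 4(2\ell+1)$ this forces $o\leq \ell^2$. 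Therefore $o=\ell^2$ and $P(C)=4(2\sqrt{o}+1)$, with corresponding parameter $s=\ell<L/2$ in Lemma~\ref{lmm:pancetta}. The equality clause of Corollary~\ref{cor:mortadella} then identifies $C$ with the rhombus $\cR_{\ell,\ell}$, giving uniqueness.

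The specialisation to $\tilde n=L^2/2-L+1$ follows by plugging $\ell=L/2-1$ into the general statement: a direct computation gives $2(L/2-1)^2+2(L/2-1)+1=L^2/2-L+1$, so the unique minimiser is $\cR_{L/2-1,L/2-1}$. I do not anticipate any genuinely hard step. The only point deserving care is the double use of Corollary~\ref{cor:mortadella} — once as a lower bound on $P(C)$ (valid for all $s<L$) and once as a rigidity statement (valid only for $s<L/2$). Both uses are legitimate here because $o\leq\ell^2<(L/2)^2$ keeps us in the required regime, and the unique representation of $n=\ell^2$ afforded by Lemma~\ref{lmm:pancetta} is $(s,k)=(\ell,0)$, so the rhombus $\cR_{s,s}$ produced by the equality clause is exactly $\cR_{\ell,\ell}$.
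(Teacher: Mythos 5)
Your single-cluster argument is sound and, in its last step, coincides with the paper's: once the number of occupied odd sites is pinned to $\ell^2$ and equality in \eqref{eq:diavola} is forced, the rigidity clause of \cref{cor:mortadella} (with the unique representation $n=\ell^2$, $s=\ell<L/2$, $k=0$) identifies the cluster with $\cR_{\ell,\ell}$; the reformulation $P(C)=4(\tilde n-2o)$, which turns perimeter minimization at fixed real area into maximization of $o$, is a clean way to organize this. One small but real flaw: your verification of the hypotheses of \cref{cor:mortadella} is circular, since you assert $o\le\ell^2$ in order to check $o\le L(L-2)$, while $o\le\ell^2$ is exactly what the corollary is then used to derive. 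This is easily repaired: $P(C)=4(e-o)>0$ gives $e\ge o+1$, hence $o\le(\tilde n-1)/2=\ell^2+\ell$, and $\ell^2+\ell\le L(L-2)$ for $\ell<L/2$, which is all you need (and also keeps $s<L$).

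The more substantive gap is one of scope. The real area is defined for configurations, and the paper's proof (and the way the lemma is used in \cref{lmm:gorgonzola}) minimizes the perimeter over configurations $\sigma$ whose odd region $\cO(\sigma)$ may consist of several clusters, degenerate $\cR_{0,0}$'s included. Your proof assumes from the outset that the competitor is a single connected odd cluster, so it does not exclude that an odd region made of two or more clusters with the same total real area has smaller total perimeter. Ruling this out is in fact the bulk of the paper's proof: it applies \eqref{eq:diavola} cluster-wise and uses the superadditivity $\sum_i 4(2\sqrt{n_i}+1)\ge 4\bigl(2\sqrt{\textstyle\sum_i n_i}+2\bigr)$ for $j\ge2$ non-degenerate clusters, with each degenerate cluster likewise adding at least $4$ to the perimeter while contributing no odd sites, to force a unique non-degenerate cluster and no degenerate ones; only then does it invoke \cref{cor:mortadella} as you do. Your bookkeeping does extend to cover this case, since $P(\sigma)=4(\tilde n-2o)$ still holds with $o$ the total number of occupied odd sites, so $P(\sigma)\le 4(2\ell+1)$ forces $o\ge\ell^2$, and then the cluster-wise bound gives $P(\sigma)\ge 4(2\sqrt{o}+1)+4>4(2\ell+1)$ as soon as there is more than one cluster; but this step has to be written out, and as it stands your proposal proves the lemma only in the connected, single-cluster reading.
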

We use this lemma to characterize the critical configurations having an odd cluster with a rhomboidal shape that does not wind around the torus. Indeed, we identify the shape of the protocritical configurations $\s$ with minimal perimeter and fixed real area $\frac{L^2}{2}-L+1$ such that $\cR(\cO^{nd}(\s))$ does not wind around the torus, and we show that if the trajectory visits another type of configuration with such a real area, then the corresponding path would be not optimal. The proof is given in \cref{sec:oddproof}. 

\section{Essential saddles: Proof of the main theorem}
\label{sec:proofthm}
In this section, we first formally introduce in~\cref{sec:preliminariesforproof} the six sets appearing in the statement of~\cref{thm:saddles} and then prove the same theorem in in~\cref{sec:actualproof} by showing that the elements of those six sets are all the essential saddles for the transition from $\ee$ to $\oo$.

\subsection{Preliminaries}
\label{sec:preliminariesforproof}

We say that a configuration $\sigma\in\cX$ has a \textit{odd (resp.\ even) vertical bridge} if there exists a column in which configuration $\sigma$ perfectly agrees with $\oo$ (resp.\ $\ee$). We define \textit{odd (resp.\ even) horizontal bridge} in an analogous way and we say that a configuration $\sigma\in\cX$ has an \textit{odd (resp.\ even) cross} if it has both vertical and horizontal odd (resp.\ even) bridges (see~\cref{fig:bridge}). In addition, we say that a configuration displays an \textit{odd (resp.\ even) vertical $m$-uple bridge}, with $m\geq2$, if there exist $m$ contiguous columns in which the configuration perfectly agrees with $\oo$ (resp.\ $\ee$). Similarly, we can define an \textit{odd (resp.\ even) vertical $m$-uple bridge} (see~\cref{fig:multiplebridges}). We refer to \cite{NZB15} for more details.

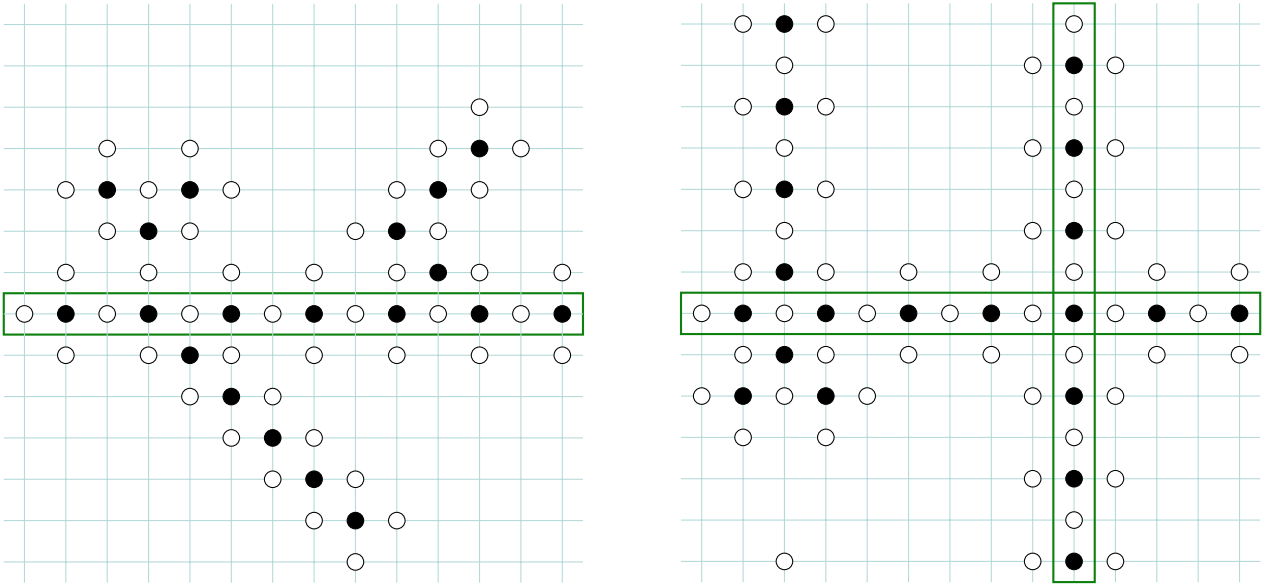
\begin{figure}[h!]
\centering
\begin{tikzpicture}[scale=0.55]
    \newcommand\radius{0.2}
    \newcommand\listodd{(-3,3),(-1,3),(1,3),(3,3),(5,3),(7,3),(9,3),(7,7),(6,6),(5,5),(6,4),(-2,6),(0,6),(-1,5),(0,2),(1,1),(2,0),(3,-1),(4,-2)}
    \newcommand\listeven{(-4,3),(-2,3),(0,3),(4,3),(2,3),(6,3),(8,3),(7,6),(7,8),(6,7),(8,7),(5,6),(6,5),(4,5),(5,4),(7,4),(-3,6),(-1,6),(-2,5),(-2,7),(1,6),(0,5),(0,7),(-1,4),(-1,2),(-3,2),(-3,4),(1,2),(1,4),(3,2),(3,4),(5,2),(7,2),(9,2),(0,1),(2,1),(1,0),(3,0),(2,-1),(4,-1),(3,-2),(5,-2),(4,-3),(9,4)}

    \draw[thick,dartmouthgreen] (-4.5,2.5) rectangle (9.5,3.5);

    \draw[help lines] (-4.5,-3.5) grid (9.5,10.5);

    \foreach \c in \listodd
        \draw[fill=black] \c circle(\radius);
    \foreach \c in \listeven
        \draw[fill=white] \c circle(\radius);
\end{tikzpicture}
\hspace{1cm}
    \begin{tikzpicture}[scale=0.55]
    \newcommand\radius{0.2}
    \newcommand\listodd{(-3,3),(-1,3),(1,3),(3,3),(5,3),(7,3),(9,3),(5,-3),(5,-1),(5,1),(5,5),(5,7),(5,9),(-2,2),(-1,1),(-3,1),(-2,4),(-2,6),(-2,8),(-2,10)}
    \newcommand\listeven{(-4,3),(-2,3),(0,3),(4,3),(2,3),(6,3),(8,3),(-3,4),(-1,4),(1,4),(3,4),(5,4),(7,4),(9,4),(-3,2),(-1,2),(1,2),(3,2),(5,2),(7,2),(9,2),(4,-3),(4,-1),(4,1),(4,5),(4,7),(4,9),(6,-3),(6,-1),(6,1),(6,5),(6,7),(6,9),(5,-2),(5,0),(5,6),(5,8),(5,10),(-4,1),(-3,0),(-2,1),(-1,0),(0,1),(-2,5),(-3,6),(-1,6),(-2,7),(-3,8),(-1,8),(-2,9),(-3,10),(-1,10),(-2,-3)}
    
    \draw[help lines] (-4.5,-3.5) grid (9.5,10.5);

    \draw[thick,dartmouthgreen] (-4.5,2.5) rectangle (9.5,3.5);
    \draw[thick,dartmouthgreen] (4.5,-3.5) rectangle (5.5,10.5);

    \foreach \c in \listodd
        \draw[fill=black] \c circle(\radius);
    \foreach \c in \listeven
        \draw[fill=white] \c circle(\radius);
\end{tikzpicture}
\caption{Examples of configurations displaying an odd horizontal bridge (on the left) and an odd cross (on the right).} 
\label{fig:bridge}
\end{figure}

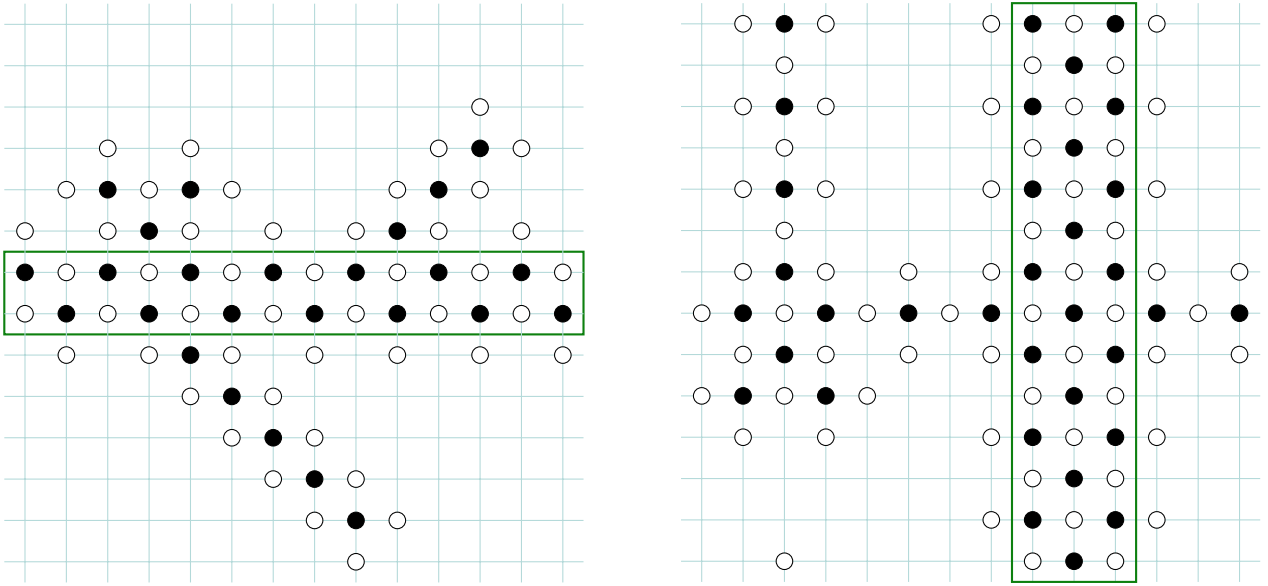
\begin{figure}[!h]
\centering
\begin{tikzpicture}[scale=0.55]
    \newcommand\radius{0.2}
    \newcommand\listodd{(-3,3),(-1,3),(1,3),(3,3),(5,3),(7,3),(9,3),(7,7),(6,6),(5,5),(6,4),(-2,6),(0,6),(-1,5),(0,2),(1,1),(2,0),(3,-1),(4,-2),(-4,4),(-2,4),(0,4),(2,4),(4,4),(8,4)}
    \newcommand\listeven{(-4,3),(-2,3),(0,3),(4,3),(2,3),(6,3),(8,3),(7,6),(7,8),(6,7),(8,7),(5,6),(6,5),(4,5),(5,4),(7,4),(-3,6),(-1,6),(-2,5),(-2,7),(1,6),(0,5),(0,7),(-1,4),(-1,2),(-3,2),(-3,4),(1,2),(1,4),(3,2),(3,4),(5,2),(7,2),(9,2),(0,1),(2,1),(1,0),(3,0),(2,-1),(4,-1),(3,-2),(5,-2),(4,-3),(9,4),(8,5),(2,5),(-4,5)}

    \draw[thick,dartmouthgreen] (-4.5,2.5) rectangle (9.5,4.5);

    \draw[help lines] (-4.5,-3.5) grid (9.5,10.5);

    \foreach \c in \listodd
        \draw[fill=black] \c circle(\radius);
    \foreach \c in \listeven
        \draw[fill=white] \c circle(\radius);
\end{tikzpicture}
\hspace{1cm}
    \begin{tikzpicture}[scale=0.55]
    \newcommand\radius{0.2}
    \newcommand\listodd{(-3,3),(-1,3),(1,3),(3,3),(5,3),(7,3),(9,3),(5,-3),(5,-1),(5,1),(5,5),(5,7),(5,9),(-2,2),(-1,1),(-3,1),(-2,4),(-2,6),(-2,8),(-2,10),(4,-2),(6,-2),(4,0),(6,0),(4,2),(6,2),(4,4),(6,4),(4,6),(6,6),(4,8),(6,8),(4,10),(6,10)}
    \newcommand\listeven{(-4,3),(-2,3),(0,3),(4,3),(2,3),(6,3),(8,3),(-3,4),(-1,4),(1,4),(3,4),(5,4),(7,4),(9,4),(-3,2),(-1,2),(1,2),(3,2),(5,2),(7,2),(9,2),(4,-3),(4,-1),(4,1),(4,5),(4,7),(4,9),(6,-3),(6,-1),(6,1),(6,5),(6,7),(6,9),(5,-2),(5,0),(5,6),(5,8),(5,10),(-4,1),(-3,0),(-2,1),(-1,0),(0,1),(-2,5),(-3,6),(-1,6),(-2,7),(-3,8),(-1,8),(-2,9),(-3,10),(-1,10),(-2,-3),(3,10),(7,10),(3,8),(7,8),(3,6),(7,6),(3,0),(7,0),(3,-2),(7,-2)}
    
    \draw[help lines] (-4.5,-3.5) grid (9.5,10.5);

    \draw[thick,dartmouthgreen] (3.5,-3.5) rectangle (6.5,10.5);

    \foreach \c in \listodd
        \draw[fill=black] \c circle(\radius);
    \foreach \c in \listeven
        \draw[fill=white] \c circle(\radius);
\end{tikzpicture}
\caption{Examples of configurations displaying an odd horizontal double (2-uple) bridge (on the left) and an odd vertical triple (3-uple) bridge (on the right).}
\label{fig:multiplebridges}
\end{figure}

The next lemma states that all configurations $\sigma\in\cX$ with $\Delta H(\sigma)<L$ must belong to one of the two initial cycles.

\begin{lem}[Configurations with $\D H <L$ belong to one of the initial cycles]
\label{lem:reducible}
If a configuration $\s \in \cX$ is such that $\D H(\s) <L$, then there exists a path $\o: \s \to \{\ee,\oo\}$ with $\Phi_\o \leq H(\s)+1$. In particular, either $\s \in C_\ee$ or $\s \in C_\oo$.
\end{lem}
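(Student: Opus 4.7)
The plan is to construct $\omega:\sigma\to\ee$ with $\Phi_\omega\le H(\sigma)+1$ in two successive stages: an \emph{expansion} stage that enlarges every odd cluster of $\sigma$ to its surrounding rhombus, and a \emph{shrinking} stage that peels every such rhombus down to the empty state. Throughout, the decisive input is the identity $P(\sigma)=4\DH(\sigma)$ from~\eqref{eq:contour}, which converts the hypothesis $\DH(\sigma)<L$ into the perimeter bound $P(\sigma)<4L$.

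First I would verify that $P(\sigma)<4L$ forces every odd cluster of $\sigma$ to be non-winding around the torus. The minimal winding structure -- a fully occupied odd column, i.e.\ a bridge -- already has perimeter exactly $4L$, and a short count (of the type appearing in the proof of Proposition~\ref{conj:rhombus}) shows that attaching further odd particles to a winding cluster cannot decrease the perimeter: any new occupied odd site in a bridge-like cluster brings at least one fresh empty even site into the odd region. Hence every non-degenerate odd cluster $C$ of $\sigma$ admits a well-defined surrounding rhombus $\cR(C)$ which does not wind around the torus, so that Proposition~\ref{lem:expansion} is applicable to it.

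In the expansion stage I would apply Proposition~\ref{lem:expansion} to the non-degenerate clusters of $\sigma$ one at a time, replacing each $\cR(C_i)$ by the $\oo$-pattern. Each application contributes at most $+1$ to the running maximal height and leaves the perimeter non-increasing; if two surrounding rhombi happen to overlap, the first filling simply absorbs the smaller cluster into the larger one, after which the remaining fillings still satisfy the hypotheses of Proposition~\ref{lem:expansion}. After all clusters have been processed we reach a configuration $\sigma^\ast$ that is a disjoint union of non-winding rhombi (plus possibly a few degenerate single-empty-even pieces), with $H(\sigma^\ast)\le H(\sigma)$ and the path $\sigma\to\sigma^\ast$ bounded in height by $H(\sigma)+1$.

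In the shrinking stage I would remove each rhombus of $\sigma^\ast$ by iterating the following peeling cycle at a chosen corner odd particle $v$: remove $v$ (height $+1$), then fill the newly unblocked even neighbors of $v$ by single flips (each contributing $-1$). A case analysis analogous to Algorithm~\ref{alg:fill2} run in reverse shows that every such cycle has net energy change $\le 0$ and peak height exactly $+1$ above its starting energy, while every intermediate configuration lies in $\cX$; the degenerate single-empty-even pieces are completed by one addition contributing $-1$. Concatenating the two stages produces the desired $\omega$, and the bound $\Phi_\omega\le H(\sigma)+1<H(\ee)+L+1=\Phi(\ee,\oo)$ together with the definition of the initial cycle gives $\sigma\in C_\ee$. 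The principal technical hurdle is the bookkeeping in the shrinking stage: one must check that each rhombus always exposes a valid corner whose peeling can be carried out respecting the hard-core constraint, with explicit handling of the small base cases such as $\cR_{1,1}$, $\cR_{2,1}$, and $\cR_{2,2}$, where the number of even neighbors freed per cycle is minimal.
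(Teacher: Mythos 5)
Your construction breaks down at its very first step, and the failure is not a technicality. From $\D H(\s)<L$ you infer $P(\s)<4L$ and then claim this forces every odd cluster of $\s$ to be non-winding, so that you may always aim the path at $\ee$ by expanding and then peeling rhombi. This is false: take $\s=\oo$, or $\oo$ with a single odd particle removed. Then $\D H(\s)\in\{0,1\}<L$ and $P(\s)=4\,\D H(\s)$ is tiny, yet the odd region winds around the torus in both directions (its surrounding ``rhombus'' is all of $V$). Small perimeter does not mean small odd region; it means the \emph{boundary} between the odd region and its complement is small, and configurations close to $\oo$ satisfy this just as well as configurations close to $\ee$. Your auxiliary claim that adding odd particles to a winding cluster cannot decrease the perimeter is likewise wrong once the cluster winds in both directions: by \cref{conj:rhombus}, $P(\cR_{\ell_1,\ell_2})=4\bigl(2L-(\ell_1+\ell_2+1)\bigr)$ decreases as such a cluster grows. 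Consequently, for $\s$ near $\oo$ your plan would have to peel an essentially full odd region down to $\ee$, which necessarily reaches height $\Phi(\ee,\oo)=H(\ee)+L+1>H(\s)+1$; no bookkeeping in the shrinking stage can repair this, because the target itself is wrong. Any correct argument must contain a dichotomy deciding whether the path ends at $\ee$ or at $\oo$, and your proposal has none.

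For comparison, the paper's proof is short and rests on exactly such a dichotomy: if $\D H(\s)<L$, then (summing the particle deficit over rows, and separately over columns) there must exist both a fully packed row and a fully packed column, i.e.\ a horizontal and a vertical bridge; the hard-core constraint forces these two bridges to have the same parity, so $\s$ contains an even cross or an odd cross, and the reduction algorithm of \cite{NZB15} then produces a path to $\ee$ or to $\oo$, respectively, with maximal height at most $H(\s)+1$. If you want to salvage a perimeter-based argument, you would at minimum need to show that $P(\s)<4L$ implies that either the odd region or the even region (defined symmetrically) has a non-winding surrounding rhombus, and then run your expansion/peeling toward the corresponding stable state; even then the net-energy accounting of the peeling cycles, which you defer, would still have to be proved. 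A secondary point: \cref{lem:expansion} is stated for a configuration whose restriction outside the surrounding rhombus is left untouched, so overlapping surrounding rhombi of distinct clusters require more care than the one-sentence ``absorption'' you invoke.
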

\begin{proof}
If $\D H(\s) <L$, then there exists both a horizontal and a vertical bridge. Due to the hard-core constraints, these $L-1$ particles should all reside on sites of the same parity, hence $\s$ has either an even cross or an odd cross. Then, one can build using the \textit{reduction algorithm} introduced in~\cite{NZB15} a path to either $\ee$ or $\oo$, respectively, with the desired properties.
\end{proof}
\FloatBarrier
\begin{defi}\label{def:essential*} 
A hard-core configuration $\s \in \cX$ on the $L \times L$ toric grid graph $\Lambda$ belongs to $\cC_{ir}(\ee,\oo)$ if the following conditions hold:
\begin{itemize}
	\item[1.] the odd region $\cO(\s)$ contains only a non-degenerate cluster $C$ and a degenerate rhombus $D \in \{\cR_{1,0}, \cR_{0,1}\}$ at distance two from $C$;
	\item[2.] the cluster $C$ is monotone;
	\item[3.] $C$ is a rhombus $\cR_{\frac{L}{2}-1,\frac{L}{2}-1}$; 
	\item[4.] $\s_{| \L \setminus (C\cup D)} = \ee_{| \L \setminus (C \cup D)}$.
\end{itemize}
\end{defi}
See~\cref{fig:selle1} (left) for an example of configurations in $\cC_{ir}(\ee,\oo)$.

\begin{defi}\label{def:essential1} 
A hard-core configuration $\s \in \cX$ on the $L \times L$ toric grid graph $\Lambda$ belongs to $\cC_{gr}(\ee,\oo)$ (resp.\ $\cC_{cr}(\ee,\oo)$) if the following conditions hold:
\begin{itemize}
    \item[1.] the odd region $\cO(\s)$ contains only a non-degenerate cluster $C$ and a degenerate rhombus $D=\cR_{0,0}$ at distance one from an antiknob;
    \item[2.] the cluster $C$ is monotone; 
    \item[3.] $C$ is a rhombus $\cR_{\frac{L}{2}-1,\frac{L}{2}-1}$ (resp.\ $\cR_{\frac{L}{2}-1,\frac{L}{2}}$) with a single bar of length $k$, for $k=1,...,\frac{L}{2}-1$ (resp.\ $k=1,...,\frac{L}{2}-2$), attached to one of its sides; 
    \item[4.] $\s_{| \L \setminus (C\cup D)} = \ee_{| \L \setminus (C \cup D)}$.
\end{itemize}
\end{defi}
See~\cref{fig:selle1} (right) and~\cref{fig:selle2} (left) for an example of configurations in $\cC_{gr}(\ee,\oo)$ and $\cC_{cr}(\ee,\oo)$, respectively. Note that for any $\sigma\in\cC_{ir}(\ee,\oo)\cup\cC_{gr}(\ee,\oo)$ the surrounding rhombus $R(\cO(\s))$ is $R_{\frac{L}{2}-1,\frac{L}{2}}$, while for any $\sigma\in\cC_{cr}(\ee,\oo)$ the surrounding rhombus $R(\cO(\s))$ is $R_{\frac{L}{2},\frac{L}{2}}$.

\begin{defi}\label{def:essential3} 
A hard-core configuration $\s \in \cX$ on the $L \times L$ toric grid graph $\Lambda$ belongs to $\cC_{sb}(\ee,\oo)$ if the following conditions hold:
\begin{itemize}
    \item[1.] the odd region $\cO(\s)$ contains only a non-degenerate cluster $C$ and a degenerate region $D$ consisting of two even sites at distance one from the same antiknob; 
    \item[2.] the cluster $C$ is monotone;
    \item[3.] $C$ is a single column or row of length $\frac{L}{2}-1$;
    \item[4.] $\s_{| \L \setminus (C\cup D)} = \ee_{| \L \setminus (C \cup D)}$. 
\end{itemize}
\end{defi}
Note that the unique possibilities are that (i) $D$ consists of two degenerate rhombi $\cR_{0,0}$ as in~\cref{fig:selle2} (right) or (ii) $D\in\{\cR_{0,1},\cR_{1,0}\}$ as in~\cref{fig:degpos} (left).

\begin{figure}[!h]
\centering
    \begin{tikzpicture}[scale=0.55]
    \newcommand\radius{0.2}
    \newcommand\listodd{(3,-3),(3,1),(3,3),(3,5),(3,7),(3,9)}
    \newcommand\listeven{(2,-3),(4,-3),(3,-2),(4,-1),(3,0),(4,1),(2,1),(3,2),(4,3),(2,3),(3,4),(4,5),(2,5),(3,6),(4,7),(2,7),(3,8),(4,9),(2,9),(3,10),(5,-2)}
    
    \draw[help lines] (-4.5,-3.5) grid (9.5,10.5);

    \draw[thick] (1.5,-3.5)--(2.5,-3.5);
    \draw[thick] (3.5,-1.5)--(2.5,-1.5);
    \draw[thick] (3.5,-0.5)--(2.5,-0.5);
    \draw[thick] (2.5,-2.5)--(1.5,-2.5)--(1.5,-3.5);
    \draw[thick] (3.5,-3.5)--(4.5,-3.5)--(4.5,-2.5)--(3.5,-2.5)--(3.5,-1.5)--(4.5,-1.5)--(4.5,-0.5)--(3.5,-0.5)--(3.5,0.5)--(4.5,0.5)--(4.5,1.5)--(3.5,1.5)--(3.5,2.5)--(4.5,2.5)--(4.5,3.5)--(3.5,3.5)--(3.5,4.5)--(4.5,4.5)--(4.5,5.5)--(3.5,5.5)--(3.5,6.5)--(4.5,6.5)--(4.5,7.5)--(3.5,7.5)--(3.5,8.5)--(4.5,8.5)--(4.5,9.5)--(3.5,9.5)--(3.5,10.5);
    \draw[thick] (2.5,10.5)--(2.5,9.5)--(1.5,9.5)--(1.5,8.5)--(2.5,8.5)--(2.5,7.5)--(1.5,7.5)--(1.5,6.5)--(2.5,6.5)--(2.5,5.5)--(1.5,5.5)--(1.5,4.5)--(2.5,4.5)--(2.5,3.5)--(1.5,3.5)--(1.5,2.5)--(2.5,2.5)--(2.5,1.5)--(1.5,1.5)--(1.5,0.5)--(2.5,0.5)--(2.5,-0.5);
    \draw[thick](2.5,-1.5)--(2.5,-2.5);

    \draw[thick] (4.5,-2.5) rectangle (5.5,-1.5);
    \draw[thick] (3.5,-1.5) rectangle (4.5,-0 .5);

    \foreach \c in \listodd
        \draw[fill=black] \c circle(\radius);
    \foreach \c in \listeven
        \draw[fill=white] \c circle(\radius);
\end{tikzpicture}
\hspace{1cm}
    \begin{tikzpicture}[scale=0.55]
    \newcommand\radius{0.2}
    \newcommand\listodd{(3,-1),(3,1),(3,3),(3,5),(3,7),(2,8),(2,4),(2,6),(2,2),(2,0),(2,-2),(1,-1),(1,1),(1,3),(1,7),(0,2),(0,0),(-1,1),(4,6),(4,4),(4,2),(4,0),(3,9),(4,8),(5,7),(5,3),(5,1)}
    \newcommand\listeven{(2,-3),(3,-2),(4,-1),(2,-1),(3,0),(4,1),(2,1),(3,2),(4,3),(2,3),(3,4),(4,5),(2,5),(3,6),(2,7),(3,8),(2,9),(1,8),(1,6),(1,4),(1,2),(1,0),(1,-2),(0,-1),(0,1),(0,3),(0,7),(-1,2),(-1,0),(-2,1),(5,6),(5,4),(5,2),(5,0),(4,7),(4,9),(3,10),(5,8),(6,7),(6,3),(6,1),(7,2)}
    
    \draw[help lines] (-4.5,-3.5) grid (9.5,10.5);

    \draw[thick] (6.5,1.5) rectangle (7.5,2.5);
    
    \draw[thick] (1.5,-3.5)--(2.5,-3.5)--(2.5,-2.5)--(3.5,-2.5)--(3.5,-1.5)--(4.5,-1.5)--(4.5,-0.5)--(5.5,-0.5)--(5.5,0.5)--(6.5,0.5)--(6.5,1.5)--(5.5,1.5)--(5.5,2.5)--(6.5,2.5)--(6.5,3.5)--(5.5,3.5)--(5.5,4.5)--(4.5,4.5)--(4.5,5.5)--(5.5,5.5)--(5.5,6.5)--(6.5,6.5)--(6.5,7.5)--(5.5,7.5)--(5.5,8.5)--(4.5,8.5)--(4.5,9.5)--(3.5,9.5)--(3.5,10.5)--(2.5,10.5)--(2.5,9.5)--(1.5,9.5)--(1.5,8.5)--(0.5,8.5)--(0.5,7.5)--(-0.5,7.5)--(-0.5,6.5)--(0.5,6.5)--(0.5,5.5)--(1.5,5.5)--(1.5,4.5)--(0.5,4.5)--(0.5,3.5)--(-0.5,3.5)--(-0.5,2.5)--(-1.5,2.5)--(-1.5,1.5)--(-2.5,1.5)--(-2.5,0.5)--(-1.5,0.5)--(-1.5,-0.5)--(-0.5,-0.5)--(-0.5,-1.5)--(0.5,-1.5)--(0.5,-2.5)--(1.5,-2.5)--(1.5,-3.5);

    \foreach \c in \listodd
        \draw[fill=black] \c circle(\radius);
    \foreach \c in \listeven
        \draw[fill=white] \c circle(\radius);
\end{tikzpicture}
\caption{An example of a configuration in $\cC_{sb}(\ee,\oo)$ that communicates with $\cC_{ib}(\ee,\oo)$ and not with $\cC_{mb}(\ee,\oo)$ (on the left) and an example of a configuration in $\cC_{ib}(\ee,\oo)$ (on the right).}
\label{fig:degpos}
\end{figure}

\begin{defi}\label{def:essentialnuove} 
A hard-core configuration $\s \in \cX$ on the $L \times L$ toric grid graph $\Lambda$ belongs to $\cC_{mb}(\ee,\oo)$ if the following conditions hold:
\begin{itemize}
    \item[1.] the odd region $\cO(\s)$ contains only a non-degenerate cluster $C$ and a degenerate rhombus $D=\cR_{0,0}$ at distance one from an antiknob;
    \item[2.] the cluster $C$ is monotone; 
    \item[3.] $C$ is composed either by:
    \begin{itemize}
        \item a odd $(L-3)$-uple bridge, or
        \item an odd $m$-uple bridge, with $2\leq m < L-3$, together with disjoint bars attached to either side of the $m$-uple bridge with total length $k$, with $k=0,...,\frac{L}{2}-1$, or 
        \item an odd bridge, together with disjoint bars attached to the bridge with total length $k$, for $k=0,...,\frac{L}{2}-1$, and $C$ does not contain $\cR_{\frac{L}{2}-1,\frac{L}{2}}$;
    \end{itemize}  
    \item[4.] $\s_{| \L \setminus (C\cup D)} = \ee_{| \L \setminus (C \cup D)}$. 
\end{itemize}
\end{defi}
See~\cref{fig:selle3} (left) for an example of a configuration in $\cC_{mb}(\ee,\oo)$. Note that any configuration $\sigma\in\cC_{mb}(\ee,\oo)$ is such that $\cR(\cO(\sigma))$ winds around the torus. Thus, the assumption that the non-degenerate cluster $C$ is monotone implies that in condition 3 not every choice of the bars is allowed. Indeed, in order for the cluster to be monotone, the bars on the right (resp.\ left) of the $m$-uple bridge can be adjacent only to a longer (resp.\ shorter) bar in lexicographic order. This implies that all the bridges are in contiguous rows or columns. Furthermore, the length of a bar is inversely proportional to its distance from the nearest bar composing the bridge.
\begin{defi}\label{def:essentialnuove2} 
A hard-core configuration $\s \in \cX$ on the $L \times L$ toric grid graph $\Lambda$ belongs to $\cC_{ib}(\ee,\oo)$ if the following conditions hold:
\begin{itemize}
    \item[1.] the odd region $\cO(\s)$ contains only a non-degenerate cluster $C$ and a degenerate rhombus $D=\cR_{0,0}$ at distance one from an antiknob;
    \item[2.] the cluster $C$ is monotone and does not contain $\cR_{\frac{L}{2}-1,\frac{L}{2}-1}$;
    \item[3.] $C$ is composed either by:
    \begin{itemize}
        \item one column (or row) $B$ with $\frac{L}{2}-1$ particles in odd sites or
        \item two neighbouring columns (or row) $B$ with $\frac{L}{2}-1$ particles in odd sites each; 
    \end{itemize}  
    In addition, in the other columns (or rows) there are $k$ particles arranged in odd sites, for $k=0,...,\frac{L}{2}-1-j$, where $j$ is the distance from $B$;
    \item[4.] $\cR(C)$ is not contained in $\cR_{\frac{L}{2}-1,\frac{L}{2}-1}$;
    \item[5.] $\s_{| \L \setminus (C\cup D)} = \ee_{| \L \setminus (C \cup D)}$. 
\end{itemize}
\end{defi}
See~\cref{fig:selle3} (right) and~\cref{fig:degpos} (right) for examples of configurations in $\cC_{ib}(\ee,\oo)$. \medskip

Next, we provide several results investigating the main properties of the optimal paths connecting $\ee$ to $\oo$. To this end, we introduce the following partition in {\it manifolds} of the state space $\cX$: for every $m=-\frac{L^2}{2},...,\frac{L^2}{2}$ we define the {\it manifold} as the subset of configurations where the difference $m(\sigma):=-e(\sigma)+o(\sigma)$ between odd and even occupied sites is equal to $m \in \mathbb{N}$, i.e., 
\begin{equation}\label{eq:foliation}
    \cV_m := \{\sigma\in\cX ~|~ m(\sigma) = m\},
\end{equation}
where $e(\sigma):=\sum_{v \in \Ve} \sigma(v)$ (resp.\ $o(\sigma):=\sum_{v \in \Vo} \sigma(v)$) is the number of the even (resp.\ odd) occupied sites in $\sigma$.

\begin{lem}\label{lmm:manifold}
For any hard-core configuration $\sigma \in \cX$, the following properties hold:
\begin{itemize}
    \item[(a)] The only configurations accessible from $\sigma$ with a single nontrivial step of the dynamics belong to $\cV_{m(\sigma) -1} \cup \cV_{m(\sigma)+1}$. In particular, any path from $\ee$ to $\oo$ must intersect each manifold $\cV_m$ at least once for every $m=-\frac{L^2}{2},...,\frac{L^2}{2}$. 
    \item[(b)] The quantities $m(\sigma)$ and $\Delta H (\sigma)$ always have the same parity, i.e., $m(\sigma) \equiv \Delta H (\sigma) \pmod 2$.
\end{itemize}
\end{lem}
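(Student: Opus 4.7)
The plan for part (a) is to observe that a single nontrivial step of the Metropolis dynamics flips the occupation status of exactly one site $v$, by the definition of the connectivity matrix $q$ in~\eqref{eq:connectivityfunction}. If $v \in V_\oo$, then $o(\sigma)$ changes by $\pm 1$ while $e(\sigma)$ is unchanged, so $m(\sigma) = -e(\sigma)+o(\sigma)$ changes by $\pm 1$. If instead $v \in V_\ee$, then $e(\sigma)$ changes by $\pm 1$ and $o(\sigma)$ is unchanged, so $m(\sigma)$ again changes by $\mp 1$. In either case the one-step update maps $\sigma \in \cV_{m(\sigma)}$ into a configuration in $\cV_{m(\sigma)-1}\cup\cV_{m(\sigma)+1}$, which proves the first assertion.

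For the second assertion of (a), I would note that $m(\ee) = -|V_\ee| = -L^2/2$ since $\ee$ occupies exactly the even sites and no odd ones, and symmetrically $m(\oo) = +L^2/2$. Consecutive configurations along any admissible path $\omega = (\omega_1,\dots,\omega_n)$ from $\ee$ to $\oo$ either coincide (trivial step, which leaves $m$ unchanged) or satisfy $m(\omega_{i+1}) = m(\omega_i)\pm 1$ by the first part. Hence the sequence $(m(\omega_i))_{i=1}^n$ is an integer-valued walk with steps in $\{-1,0,+1\}$ going from $-L^2/2$ to $+L^2/2$, so by a discrete intermediate value argument it must take every integer value $m \in \{-L^2/2,\dots,L^2/2\}$ at least once; equivalently, $\omega$ intersects every manifold $\cV_m$.

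For part (b), the strategy is a direct parity computation starting from the explicit formula for $\Delta H$. Recall that $H(\sigma) = -\sum_{v\in V}\sigma(v) = -(e(\sigma)+o(\sigma))$ and $H(\ee) = -L^2/2$, so
\[
    \Delta H(\sigma) \;=\; \tfrac{L^2}{2} - e(\sigma) - o(\sigma), \qquad m(\sigma) \;=\; o(\sigma) - e(\sigma).
\]
Adding these gives $\Delta H(\sigma) + m(\sigma) = L^2/2 - 2 e(\sigma)$. Since $L$ is assumed even, $L = 2\ell$ and hence $L^2/2 = 2\ell^2$ is even, so the right-hand side is even. Thus $\Delta H(\sigma) \equiv m(\sigma) \pmod 2$, which is the claim.

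There is no real obstacle in this lemma: part (a) is essentially bookkeeping on what a single spin flip does to the odd/even counts, combined with the trivial fact that an integer walk with unit steps hits every intermediate value, and part (b) is a one-line parity check that relies only on $L$ being even (an assumption already standing throughout the paper). The only mild care needed is to include the possibility of trivial self-loops $P_\beta(\sigma,\sigma)>0$ in the path definition, which leaves $m$ unchanged and therefore does not interfere with the intermediate-value conclusion.
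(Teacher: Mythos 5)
Your proposal is correct and follows the same route as the paper, which notes that a single nontrivial step changes either $e(\sigma)$ or $o(\sigma)$ by at most $\pm 1$ (giving (a) together with the discrete intermediate-value observation) and deduces (b) from $\Delta H(\sigma)=-e(\sigma)-o(\sigma)+\frac{L^2}{2}$ and $L$ even. You merely spell out the bookkeeping in more detail than the paper's two-line argument.
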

The proof of (a) is immediate by noticing that at every step of the dynamics either $e(\sigma)$ or $o(\sigma)$ can change value and at most by $\pm 1$ and that of (b) follows from the fact that $\Delta H(\sigma)=-e(\sigma)-o(\sigma)+\frac{L^2}{2}$ and that $L$ is even.

A special role in our analysis will be played by the \textit{non-backtracking paths}, i.e., those paths that visit each manifold exactly once.
Lemmas \ref{lmm:cammino*}--\ref{lmm:cammino6} below ensure the existence of an optimal path connecting $\ee$ to $\oo$ and passing through the six sets that define $\cC^*(\ee,\oo)$. In addition, \cref{lmm:nocammino} below shows that some of the sets composing $\cC^*(\ee,\oo)$ do not directly communicate. Later, in \cref{sec:thm2}, combining these lemmas, we will prove that the communication structure of these six sets at energy not higher than $H(\ee)+L+1$ is the one illustrated in~\cref{fig:panorama}. The proof of these lemmas is deferred to \cref{sec:proofrefpath}.

\begin{lem}\label{lmm:cammino*}
The following statements hold.
\begin{itemize}
\item[(i)] For any configuration $\eta\in\cC_{ir}(\ee,\oo)$ there exists a non-backtracking optimal path $\omega:\ee\rightarrow\eta$ such that $\arg\max_{\xi\in\omega}\Delta H(\xi)=\{\eta\}$. 
\item[(ii)] For any configuration $\eta\in\cC_{sb}(\ee,\oo)$ there exists a non-backtracking path $\omega:\ee\rightarrow\eta$ such that $\arg\max_{\xi\in\omega}\Delta H(\xi)=\{\eta\}$. 
\end{itemize}
\end{lem}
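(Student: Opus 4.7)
The plan is to build, in each case, an explicit non-backtracking path from $\ee$ to $\eta$ by growing the unique non-degenerate cluster of $\eta$ one odd particle at a time, and deferring the two empty even sites of the degenerate region $D$ to the final two moves.

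For part~(i), fix $\eta\in\cC_{ir}(\ee,\oo)$ and write $\cO(\eta)=C\cup D$ with $C=\cR_{\frac{L}{2}-1,\frac{L}{2}-1}$ centered at some reference odd site $c$ and $D=\{d_1,d_2\}$. Let $\sigma_C$ denote the configuration obtained from $\ee$ by emptying the even sites of $C$ and filling its odd sites; then $\omega$ is the concatenation of a growth segment $\omega^{\mathrm{gr}}\colon\ee\to\sigma_C$ with the two final moves $\sigma_C\to\sigma_C^{(d_1,0)}\to\eta$. The growth $\omega^{\mathrm{gr}}$ is designed to pass through the nested chain
\begin{equation*}
\cR_{1,1}(c)\subset\cR_{1,2}(c)\subset\cR_{2,2}(c)\subset\cR_{2,3}(c)\subset\cdots\subset\cR_{\frac{L}{2}-1,\frac{L}{2}-1}(c).
\end{equation*}
On each step $\cR_{a,b}\to\cR_{a,b+1}$ (and symmetrically $\cR_{a,b}\to\cR_{a+1,b}$) I would process the new odd particles in order along the new diagonal; for each one, first empty its even neighbors that are not already empty and then add the particle itself. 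A direct neighbor count based on~\cref{def:rectangle_odd_points} shows that the first particle along the new diagonal has exactly two of its four even neighbors already in $\pap S_{a,b}$ and so needs only two fresh emptyings, while every subsequent particle in that diagonal shares one further empty neighbor with the one just added and so needs only one. The resulting $\Delta H$ trajectory is
\begin{equation*}
a{+}b{+}1\to a{+}b{+}2\to a{+}b{+}3\to a{+}b{+}2\to a{+}b{+}3\to\cdots\to a{+}b{+}2,
\end{equation*}
with local peak $a{+}b{+}3$ and final value $\Delta H(\cR_{a,b+1})=a{+}b{+}2$.

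Consequently the peak of $\Delta H$ over $\omega^{\mathrm{gr}}$ is attained at the last growth step and equals $L=\Delta H(C)+1$, while $\Delta H(\sigma_C)=L-1$. The two finalization moves empty $d_1$ and $d_2$ (occupied even sites of $\sigma_C$, since $D$ is disjoint from $C$), raising $\Delta H$ first to $L$ and then to $L+1=\Delta H(\eta)$, a value attained only at $\eta$ itself. Since every move either adds an odd particle or removes an even one, $m(\sigma)$ strictly increases along $\omega$ and hence $\omega$ is non-backtracking; combined with $\arg\max_{\xi\in\omega}\Delta H(\xi)=\{\eta\}$ and $\Phi_\omega=L+1=\Phi(\ee,\oo)$, this also yields optimality, proving~(i).

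For part~(ii) the same scheme works with $C$ replaced by the bar of $\frac{L}{2}-1$ odd particles in $\eta$, which I would build by processing its particles sequentially along the bar as $b_1,\dots,b_{\frac{L}{2}-1}$, each preceded by the emptying of its still-occupied even neighbors. Here $b_1$ needs four fresh empties while each $b_k$ with $k\geq 2$ shares exactly one even neighbor with $b_{k-1}$ and therefore needs three, giving a $\Delta H$ trajectory on the $k$-th insertion equal to $2k{-}1\to 2k\to 2k{+}1\to 2k{+}2\to 2k{+}1$. Thus the growth peak equals $L$, reached at $k=\frac{L}{2}-1$, and the same two-step finalization on $D$ then produces the unique maximum $L+1$ at $\eta$. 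The main obstacle in both parts is the energy bookkeeping above: one has to check that, with the orderings specified, the running $\Delta H$ never reaches $L+1$ before $\eta$, and that the two sites of $D$---which lie at distance two from $C$ in~(i) and share a common antiknob of $C$ in~(ii)---can be consistently emptied as the last two moves without disturbing the preceding growth.
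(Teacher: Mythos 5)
Your proposal is correct and follows essentially the same route as the paper: an explicit reference path that grows the unique non-degenerate cluster from $\ee$ through a nested family of rhombi (resp.\ by adding the bar particles one by one), alternating the strictly necessary even-site removals with odd-site additions so that $\Delta H$ peaks at $L$ during the growth, and then performing the two removals for the degenerate region $D$ last, which makes $\eta$ the unique maximizer; your neighbor-sharing counts and the resulting $\Delta H$ trajectories match the paper's bookkeeping (done there by induction on $\ell$ for the rhombi and via a four-step iteration for the bar), and your monotonicity-of-$m(\sigma)$ observation cleanly gives the non-backtracking property.
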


\begin{lem}\label{lmm:cammino**}
The following statements hold.
\begin{itemize}
\item[(i)] For any configuration $\eta\in\cC_{ib}(\ee,\oo)$ there exist a configurations $\bar\eta\in\cC_{sb}(\ee,\oo)$ and a non-backtracking path $\omega:\bar\eta\rightarrow\eta$ such that $\Phi_{\omega}-H(\ee)=\Delta H(\eta)=L+1$ and $\arg\max_{\xi\in\omega} H(\xi)\subseteq\cC_{sb}(\ee,\oo)\cup\cC_{ib}(\ee,\oo)$.
\item[(ii)]For any configuration $\eta\in\cC_{gr}(\ee,\oo)$ there exist a configuration $\bar\eta\in\cC_{ir}(\ee,\oo)$, a configuration $\tilde\eta\in\cC_{ib}(\ee,\oo)$ and two non-backtracking paths $\omega:\bar\eta\rightarrow\eta$, $\omega':\tilde\eta\rightarrow\eta$ such that 
\begin{itemize}
\item[-] $\Phi_{\omega}-H(\ee)=L+1$ and $\arg\max_{\xi\in\omega} H(\xi)\subseteq\cC_{gr}(\ee,\oo)\cup\cC_{ir}(\ee,\oo)$;
\item[-] $\Phi_{\omega'}-H(\ee)=L+1$ and $\arg\max_{\xi\in\omega'} H(\xi)\subseteq\cC_{gr}(\ee,\oo)\cup\cC_{ib}(\ee,\oo)$.
\end{itemize}
\item[(iii)] For any configuration $\eta\in\cC_{mb}(\ee,\oo)$ there exist a configuration $\bar\eta\in\cC_{sb}(\ee,\oo)$, a configuration $\tilde\eta\in\cC_{ib}(\ee,\oo)$ and two non-backtracking paths $\omega:\bar\eta\rightarrow\eta$, $\omega':\tilde\eta\rightarrow\eta$ such that 
\begin{itemize}
\item[-] $\Phi_{\omega}-H(\ee)=L+1$ and $\arg\max_{\xi\in\omega} H(\xi)\subseteq\cC_{mb}(\ee,\oo)\cup\cC_{sb}(\ee,\oo)$;
\item[-] $\Phi_{\omega'}-H(\ee)=L+1$ and $\arg\max_{\xi\in\omega'} H(\xi)\subseteq\cC_{mb}(\ee,\oo)\cup\cC_{ib}(\ee,\oo)$.
\end{itemize}
\item[(iv)] For any configuration $\eta\in\cC_{cr}(\ee,\oo)$ there exist a configuration $\bar\eta\in\cC_{gr}(\ee,\oo)$, a configuration $\tilde\eta\in\cC_{mb}(\ee,\oo)$ and two non-backtracking paths $\omega:\bar\eta\rightarrow\eta$, $\omega':\tilde\eta\rightarrow\eta$ such that 
\begin{itemize}
\item[-] $\Phi_{\omega}-H(\ee)=L+1$ and $\arg\max_{\xi\in\omega} H(\xi)\subseteq\cC_{cr}(\ee,\oo)\cup\cC_{gr}(\ee,\oo)$;
\item[-] $\Phi_{\omega'}-H(\ee)=L+1$ and $\arg\max_{\xi\in\omega'} H(\xi)\subseteq\cC_{cr}(\ee,\oo)\cup\cC_{mb}(\ee,\oo)$.
\end{itemize}
\end{itemize}
\end{lem}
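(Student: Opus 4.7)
The plan is to build, for each of the four claims, explicit non-backtracking paths between specific configurations in the designated sets. The unifying observation is that all configurations in $\cC^*(\ee,\oo)$ are saddles at $\D H = L+1$ sharing the same total particle count $\frac{L^2}{2}-L-1$; hence any non-backtracking path between two such saddles consists exclusively of moves increasing $m(\s) = -e(\s) + o(\s)$ by one, i.e., either adding a particle at an empty odd site (which lowers $\D H$ to $L$) or removing a particle at an occupied even site (which raises $\D H$ back to $L+1$). The path therefore oscillates between the two energy levels $L$ and $L+1$, and the arg-max requirement reduces to verifying that each intermediate configuration at level $L+1$ lies in the union of the two sets involved. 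In particular, the length of any such path equals the difference $m(\eta)-m(\bar\eta)$ and the first move must be ``add odd'' while the last must be ``remove even''.

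For (i), given $\eta \in \cC_{ib}(\ee,\oo)$ with main column (or row) $B$ of length $\frac{L}{2}-1$ and decoration bars of total length $k_j$ in the columns at distance $j$ from $B$, I would take $\bar\eta \in \cC_{sb}(\ee,\oo)$ to consist of $B$ together with the two empty even sites of the antiknob region of $\eta$. The path would then grow the decoration bars one odd particle at a time, in the order enforced by the monotonicity condition of $\cC_{ib}(\ee,\oo)$ (longest bars closest to $B$, lexicographically within each column), each ``add odd'' move being followed immediately by a matching ``remove even'' move at an adjacent even site, so that after every pair of flips a new saddle in $\cC_{ib}(\ee,\oo)$ is produced. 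For (ii), the path from $\bar\eta \in \cC_{ir}(\ee,\oo)$ to $\eta \in \cC_{gr}(\ee,\oo)$ with attached bar of length $k$ grows that bar one particle at a time on the side prescribed by $\eta$, passing through saddles in $\cC_{gr}(\ee,\oo)$ with bar lengths $1,\dots,k-1$; the path from $\tilde\eta \in \cC_{ib}(\ee,\oo)$ is obtained by choosing $\tilde\eta$ to be the configuration in which the bar attached to the rhombus of $\eta$ has been ``unrolled'' into the adjacent column, and then reversing the unrolling one diagonal exchange at a time. Parts (iii) and (iv) follow analogous templates, now exploiting the $m$-uple bridge structure of $\cC_{mb}(\ee,\oo)$ and the larger rhombus $\cR_{\frac{L}{2}-1,\frac{L}{2}}$ of $\cC_{cr}(\ee,\oo)$: each target configuration is decomposed into a ``base'' shape of the source set plus a controlled number of exchange moves that either lengthen the bridge or rotate the bars into the rhomboidal pattern.

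The main obstacle will be the verification at each step that the intermediate saddle really satisfies the full list of geometric conditions for its purported set: monotonicity of the odd cluster (no broken diagonal creeping in), the exact prescribed form of the degenerate region $D$ (two even sites adjacent to a single antiknob, or $\cR_{0,0}$, or $\cR_{1,0}/\cR_{0,1}$), and the preservation of $\s_{|\L \setminus (C \cup D)} = \ee_{|\L \setminus (C \cup D)}$. Ensuring monotonicity is most delicate in (ii) and (iv), where the reshaping from a column-with-bars cluster into a rhombus-with-bar, or from a multi-bridge into $\cR_{\frac{L}{2}-1,\frac{L}{2}}$, must be carried out in an order that never temporarily creates broken diagonals. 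I would leverage \cref{lem:expansion} (odd cluster expansion via the filling algorithms) together with the perimeter identity~\eqref{eq:contour} to certify that the sequences of exchange moves can be organized so as to keep the cluster monotone throughout, and hence the maximal energy along the constructed path equal exactly to $L+1$, thereby simultaneously giving both $\Phi_\omega - H(\ee)=L+1$ and the required inclusion of $\arg\max_{\xi\in\omega}H(\xi)$.
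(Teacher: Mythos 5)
Your overall framework is the same as the paper's: since every configuration in $\cC^*(\ee,\oo)$ is a saddle at $\D H=L+1$, a non-backtracking path between two of them can only add particles on odd sites and remove particles from even sites, so one builds the path as a sequence of add-odd/remove-even pairs and checks that every intermediate configuration at level $L+1$ lies in the prescribed union of two sets. Part (i) and the first path of part (ii) (growing the bar from a configuration in $\cC_{ir}(\ee,\oo)$, with \cref{lem:expansion} certifying the energy bound) coincide with the paper's construction, which proceeds along the reference paths of \cref{lmm:cammino*} and then iterates the pair of moves ``add a particle in the unique unblocked odd site / remove a particle from an even site at distance one from an antiknob''.

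The genuine gap is in your second paths: the route from $\tilde\eta\in\cC_{ib}(\ee,\oo)$ to $\eta\in\cC_{gr}(\ee,\oo)$ in (ii), and the analogues in (iii)--(iv), which you describe via ``unrolling'' the bar into the adjacent column and then reversing it ``one diagonal exchange at a time'', or ``rotating the bars'' into the rhomboidal pattern. This contradicts the constraint you yourself established: along a non-backtracking path at height $L+1$ between two saddles, odd particles are never removed and even particles are never added, so the occupied odd sites of $\tilde\eta$ must be a subset of those of $\eta$, and no relocation (exchange, rotation) of an odd particle is possible without passing through energy $L+2$. A configuration in which the bar particles of $\eta$ have been moved into a neighbouring column is therefore not connectable to $\eta$ by any admissible path, so your choice of $\tilde\eta$ does not work as stated. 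The paper avoids this by taking $\tilde\eta$ to be a configuration in $\cC_{ib}(\ee,\oo)$ visited by the concatenated reference path and reaching $\eta$ purely by add/remove pairs that enlarge the cluster until it contains $\cR_{\frac{L}{2}-1,\frac{L}{2}-1}$ (resp.\ $\cR_{\frac{L}{2}-1,\frac{L}{2}}$ for $\cC_{cr}(\ee,\oo)$), after which all saddles crossed are in $\cC_{gr}(\ee,\oo)$ (resp.\ $\cC_{cr}(\ee,\oo)$). Note also that \cref{lem:expansion} by itself only controls the energy and the perimeter; it does not give membership of the intermediate saddles in the specific sets. For instance, condition 4 of Definition \ref{def:essentialnuove2} ($\cR(C)\not\subseteq\cR_{\frac{L}{2}-1,\frac{L}{2}-1}$) constrains the order in which the decoration particles may be added already in part (i), so the set-membership verification you defer would still need to be done by hand, with the growth order chosen accordingly.
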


\begin{lem}\label{lmm:cammino6}
The following statements hold.
\begin{itemize}
    \item[(i)] For any configuration $\eta\in\cC_{cr}(\ee,\oo)$ there exists a non-backtracking path $\omega:\eta\rightarrow\oo$ such that $\Phi_{\omega}-H(\ee)=L+1$ and $\arg\max_{\xi\in\omega} H(\xi)\subseteq\cC_{cr}(\ee,\oo)$.
    \item[(ii)] For any configuration $\eta\in\cC_{mb}(\ee,\oo)$ there exists a non-backtracking path $\omega:\eta\rightarrow\oo$ such that $\Phi_{\omega}-H(\ee)=L+1$ and $\arg\max_{\xi\in\omega} H(\xi)\subseteq\cC_{mb}(\ee,\oo)$.
\end{itemize}
\end{lem}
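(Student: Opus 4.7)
The plan is to construct, for each $\eta$, an explicit non-backtracking path $\omega:\eta \to \oo$ whose maximum energy is $H(\ee)+L+1$ and is attained only at $\eta$ and at intermediate configurations that belong to $\cC_{cr}(\ee,\oo)$ (resp.\ $\cC_{mb}(\ee,\oo)$). The first move is always an ``antiknob flip'': by \cref{def:essential1} (resp.\ \cref{def:essentialnuove}), the antiknob $v$ has three empty even neighbors in the cluster $C$ and its fourth even neighbor is precisely the degenerate rhombus $D=\cR_{0,0}$, so all four even neighbors of $v$ are empty. The move $\eta \to \sigma_1:=\eta^{(v,1)}$ is therefore admissible and lowers $H$ by one, yielding a configuration whose odd region is a single non-degenerate cluster obtained by adjoining $v$ and absorbing $D$, with $\Delta H(\sigma_1)=L$.

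For part (i), the cluster of $\sigma_1$ is $\cR_{L/2-1,L/2}$ with a bar of length $k+1$. I would then iterate a ``remove-then-add'' pair of moves to extend the bar one odd site at a time. Let $v'$ be the next odd position along the bar; a direct coordinate analysis shows that three of the four even neighbors of $v'$ lie in the current cluster and the fourth, say $w$, is a unique occupied even site just outside. The removal $\sigma \to \sigma^{(w,0)}$ raises the energy from $L$ to $L+1$ and creates a degenerate $\cR_{0,0}$ at $w$ at distance one from the new antiknob $v'$, so the resulting configuration matches exactly \cref{def:essential1} and thus lies in $\cC_{cr}$. The subsequent flip of $v'$ is free (all four of its even neighbors are now empty) and returns the energy to $L$ while lengthening the bar. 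After $L/2-k-2$ such pairs the bar has length $L/2-1$; the single odd site needed to complete $\cR_{L/2,L/2}$ turns out, thanks to the wrap-around on the torus, to have all four even neighbors already in the cluster, so one last free addition produces a configuration $\zeta$ whose odd cluster equals $\cR_{L/2,L/2}$. By~\cref{conj:rhombus}, $\Delta H(\zeta)=L-1$, hence $\zeta\in C_\oo$ by~\cref{lem:reducible}, and we concatenate with a descending path $\zeta\to\oo$ that stays strictly below energy $L+1$. Non-backtracking is automatic since every move (odd addition or even removal) strictly increases $m(\sigma)$, so each manifold is visited exactly once.

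Part (ii) follows the same scheme with the bridge-plus-bars structure of $\eta\in\cC_{mb}$ in place of the rhombus-plus-bar one: the antiknob flip gives $\sigma_1$ with $\Delta H=L$, the iterated R-then-A pairs now fill odd sites along rows or columns adjacent to the bridge, and each R-produced intermediate matches \cref{def:essentialnuove} and therefore belongs to $\cC_{mb}$. After finitely many pairs a second, orthogonal odd bridge is formed, producing an odd cross that lies in $C_\oo$, and the descent to $\oo$ is finished by the reduction algorithm of~\cite{NZB15} without exceeding $L+1$. The main obstacle in both parts is the inductive geometric verification that, after each R step, the configuration produced has exactly the structure described in \cref{def:essential1} (resp.\ \cref{def:essentialnuove}): a single monotone non-degenerate cluster of the prescribed form together with a degenerate $\cR_{0,0}$ at distance one from a newly formed antiknob. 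This reduces to tracking, at each stage, the shape of the cluster and the location of the single occupied even particle obstructing the next odd addition.
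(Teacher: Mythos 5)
Part (i) of your proposal is essentially the paper's own construction: you continue the add-odd/remove-even alternation so that every configuration at the top energy is again of the form ``$\cR_{\frac{L}{2}-1,\frac{L}{2}}$ plus a longer bar plus a degenerate $\cR_{0,0}$ next to the new antiknob'', and you finish by observing that, because of the wrap-around of the completing diagonal, the last one (or two) odd additions are free, which drops the energy to $L-1$ and lands in $C_\oo$; this is exactly the ``two antiknobs'' finale in the paper, just spelled out in coordinates.

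Part (ii), however, contains a genuine error in the step that ends the saddle plateau. Starting from $\eta\in\cC_{mb}(\ee,\oo)$ with, say, a vertical bridge, the remove/add pairs fill the odd sites of the \emph{column adjacent} to the bridge: every such odd site has exactly one blocked even neighbour, in the next column over, so each addition is preceded by one removal and the energy keeps oscillating between $L$ and $L+1$. What this produces are bridges \emph{parallel} to the existing one, never an orthogonal bridge: completing a row transversal to the bridge would require placing odd particles far from the cluster, each needing several even removals and hence energy strictly above $L+1$. So no odd cross is formed at the stage you claim, and moreover a configuration made of parallel bridges at $\Delta H(\cdot)=L$ is \emph{not} in $C_\oo$ — to move further towards $\oo$ it must again remove blocked even particles, which brings the energy back to $L+1$. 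Consequently your appeal to \cref{lem:reducible} and the reduction algorithm at that point fails: the path cannot leave the saddle level after ``finitely many pairs''; it must keep visiting $\cC_{mb}(\ee,\oo)$ saddles while the multi-uple bridge grows around the torus, and only when the last blocking even column has been emptied (i.e.\ when an $(L-2)$-uple bridge is reached and every even site is empty) does the energy fall below $L$ and the remaining descent to $\oo$ become free — which is how the paper concludes, via the two consecutive odd additions available at the last $\cC_{mb}$ configuration. Your intermediate claim that each post-removal configuration satisfies \cref{def:essentialnuove} is the right idea and matches the paper, but the termination mechanism (orthogonal bridge, cross, membership in $C_\oo$) must be replaced by the argument just described.
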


\begin{lem}\label{lmm:nocammino}
The following statements hold.
\begin{itemize}
\item[(i)] For any configurations $\eta\in\cC_{ir}(\ee,\oo)$ and $\eta'\in\cC_{ib}(\ee,\oo)$, there is no optimal path $\omega:\eta\to\eta'$ such that $\arg\max_{\xi\in\omega}\subseteq\cC_{ir}(\ee,\oo)\cup\cC_{ib}(\ee,\oo)$.
\item[(ii)] For any configurations $\eta\in\cC_{ib}(\ee,\oo)$ and $\eta'\in\cC_{cr}(\ee,\oo)$, there is no optimal path $\omega:\eta\to\eta'$ such that $\arg\max_{\xi\in\omega}\subseteq\cC_{cr}(\ee,\oo)\cup\cC_{ib}(\ee,\oo)$.
\item[(iii)] For any configurations $\eta\in\cC_{ir}(\ee,\oo)$ and $\eta'\in\cC_{mb}(\ee,\oo)$, there is no optimal path $\omega:\eta\to\eta'$ such that $\arg\max_{\xi\in\omega}\subseteq\cC_{ir}(\ee,\oo)\cup\cC_{mb}(\ee,\oo)$.
\item[(iv)] For any configurations $\eta\in\cC_{gr}(\ee,\oo)$ and $\eta'\in\cC_{mb}(\ee,\oo)$, there is no optimal path $\omega:\eta\to\eta'$ such that $\arg\max_{\xi\in\omega}\subseteq\cC_{gr}(\ee,\oo)\cup\cC_{mb}(\ee,\oo)$.
\end{itemize}
\end{lem}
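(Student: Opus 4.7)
The plan is to prove each of the four statements (i)--(iv) by contradiction, using a common strategy. Suppose there exists an optimal path $\omega:\eta\to\eta'$ with $\arg\max_{\xi\in\omega}H(\xi)\subseteq\cC\cup\cC'$, where $\cC$ and $\cC'$ denote the two sets in the given statement. Let $\xi_0=\eta,\xi_1,\dots,\xi_k=\eta'$ be the sequence of configurations on $\omega$ at the peak energy $H(\ee)+L+1$; by hypothesis, each $\xi_i$ lies in $\cC\cup\cC'$. Between consecutive peaks, the path dips to $\Delta H\leq L$. The goal reduces to showing that for every consecutive pair $\xi_i,\xi_{i+1}$, both lie in the same one of $\cC,\cC'$; since $\eta\in\cC$ and $\eta'\in\cC'$, this is a contradiction.

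The strategy is to identify a geometric invariant that is preserved across every downhill-then-uphill segment between two peaks. The identity $P(\sigma)=4\Delta H(\sigma)$ bounds the perimeter of all configurations on $\omega$ by $4(L+1)$, and each single-site flip changes $\Delta H$ by $\pm 1$ and modifies the contour locally in a constrained way. Using the isoperimetric inequalities of \cref{lem:minP} and \cref{cor:mortadella} and the properties of rhombi in \cref{lem:complrombo} and \cref{conj:rhombus}, I would enumerate, for each peak in each set, the possible single-flip predecessors at $\Delta H=L$ and the possible single-flip successors from $\Delta H=L$ back up to $\Delta H=L+1$.

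For case (i), the invariant is the containment of the surrounding rhombus $\cR(C)$ in $\cR_{L/2-1,L/2-1}$: this holds for $\cC_{ir}$ but fails for $\cC_{ib}$ by condition 4 of \cref{def:essentialnuove2}. Starting from a peak $\xi\in\cC_{ir}$ with $C=\cR_{L/2-1,L/2-1}$ and degenerate rhombus $D$ at distance two, the only single-flip moves yielding $\Delta H(\xi')=L$ are the contraction of $D$ or the addition of an odd particle at an antiknob of $C$; the next peak reachable from such $\xi'$ either returns to $\cC_{ir}$ or lies in $\cC_{gr}$ (after growing a bar), but never in $\cC_{ib}$. For case (ii), the invariant is whether $C$ contains a sub-rhombus $\cR_{L/2-1,L/2-1}$: true for $\cC_{cr}$ (since $C\supseteq\cR_{L/2-1,L/2}$) and false for $\cC_{ib}$ by condition 2 of \cref{def:essentialnuove2}. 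For cases (iii) and (iv), the invariant is whether $C$ winds around the torus: false for $\cC_{ir}$ and $\cC_{gr}$ (by \cref{conj:rhombus}, since their surrounding rhombus has $\min\{\ell_1,\ell_2\}<L/2$) and true for $\cC_{mb}$ (which contains a bridge). Creating a winding cluster from a non-winding one via single-site flips requires completing an entire row or column of $L/2$ particles, which forces a peak in $\cC_{sb}$, $\cC_{cr}$ or $\cC_{ib}$ to appear.

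The main obstacle is the exhaustive and somewhat tedious enumeration of single-flip possibilities from each peak. Although each peak has only a handful of direct neighbors at $\Delta H=L$, from these there can be multiple branches returning to $\Delta H=L+1$, and one must verify that no ``exotic'' sequence of flips through the $\Delta H\leq L$ region can alter the chosen invariant without passing through a peak outside $\cC\cup\cC'$. The isoperimetric inequalities are crucial here: they force intermediate clusters at $\Delta H=L$ to have near-minimal perimeter, severely constraining their shapes and ruling out invariant-violating paths.
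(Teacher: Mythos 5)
Your reduction (look at the sequence of peak configurations at energy $H(\ee)+L+1$ along $\omega$ and show consecutive peaks cannot lie in different sets) is sound, and the distinguishing properties you pick out (containment of $\cR(C)$ in $\cR_{\frac{L}{2}-1,\frac{L}{2}-1}$, presence of a sub-rhombus $\cR_{\frac{L}{2}-1,\frac{L}{2}-1}$, winding around the torus) do separate the relevant pairs of sets. However, the proposal stops exactly where the lemma's content lies: the claim that the chosen invariant cannot change between two consecutive peaks is asserted, not proved, and you yourself flag it as ``the main obstacle''. Moreover, the method you propose for closing it --- enumerating single-flip predecessors and successors of each peak --- cannot suffice as stated: between two consecutive peaks the path is only constrained to stay at $\Delta H\leq L$, so it may descend deep into the initial cycle $C_\ee$ (indeed all of $\{\Delta H<L\}$ lies in $C_\ee\cup C_\oo$ by \cref{lem:reducible}), completely dismantle and rebuild the odd cluster there, and re-ascend at a configuration far from the previous peak. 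The next peak is therefore not reachable by ``a handful of branches'' from the one-flip neighborhood of the previous peak, and the isoperimetric inequalities, which only constrain shapes near the saddle level, do not by themselves exclude such excursions.

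What is needed, and what the paper actually argues, is a global statement about how the distinguishing structure can be \emph{created} at all at the saddle level: to produce a column or row with exactly $\frac{L}{2}-1$ odd particles (the hallmark of $\cC_{ib}(\ee,\oo)$) the path must cross a saddle in $\cC_{sb}(\ee,\oo)$; to produce a bridge (needed for $\cC_{mb}(\ee,\oo)$) it must likewise cross $\cC_{sb}(\ee,\oo)$; and for case (ii) the paper notes that any $\cC_{cr}(\ee,\oo)$ configuration differs from any $\cC_{ib}(\ee,\oo)$ configuration in at least two odd sites, and the intermediate level-$(L+1)$ configurations along such a transition lie in $\cC_{mb}(\ee,\oo)$ or $\cC_{gr}(\ee,\oo)$. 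In each case the forced intermediate saddle lies outside the union $\cC\cup\cC'$, which is what contradicts the hypothesis on $\arg\max_{\xi\in\omega}H(\xi)$. Your invariants are compatible with this picture, but without an argument of this kind (which in the paper leans on the structure of admissible moves at the saddle level and on the classification underlying \cref{lmm:gorgonzola} and \cref{prp:vegetariana2}), the proposal has a genuine gap rather than a complete proof.
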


\subsection{Proof of \cref{thm:saddles}}
\label{sec:actualproof}
This section is entirely devoted to the proof of \cref{thm:saddles}. More specifically, we prove that any essential saddle belongs to the set $\cC^*(\ee,\oo)$ in \cref{sec:inclusione2}, we describe how the transitions between essential gates can take place  in \cref{sec:thm2}, and we prove that all the saddles in $\cC^*(\ee,\oo)$ are essential in \cref{sec:inclusione1}.

\subsubsection{Every essential saddle belongs to $\cC^*(\ee,\oo)$}
\label{sec:inclusione2}

In this subsection, we will show that any essential saddle $\s$ belongs to the subset $\cC^*(\ee,\oo)$. This readily follows from~\cref{prp:vegetariana2} below.

\begin{prop}\label{prp:vegetariana2}
Let $\sigma$ be an essential saddle. Then, the following statements hold:
\begin{itemize}
    \item[(i)] If $\cR(\cO^{nd}(\sigma))$ and $\cR(\cO(\sigma))$ do not wind around the torus, then $\sigma\in\cC_{ir}(\ee,\oo)\cup\cC_{gr}(\ee,\oo)$.
    \item[(ii)] If $\cR(\cO^{nd}(\sigma))$ does not wind around the torus, $\cR(\cO(\sigma))$ does, and $\sigma$ belongs to $\omega\in(\ee\to\oo)_{opt}$ that crosses the set $\cC_{ir}(\ee,\oo)\cup\cC_{gr}(\ee,\oo)$, then $\sigma\in\cC_{cr}(\ee,\oo)$.
    \item[(iii)] If $\cR(\cO^{nd}(\sigma))$ does not wind around the torus, $\cR(\cO(\sigma))$ does, and $\sigma$ belongs to $\omega\in(\ee\to\oo)_{opt}$ that does not cross the set $\cC_{ir}(\ee,\oo)\cup\cC_{gr}(\ee,\oo)$, then $\sigma\in\cC_{sb}(\ee,\oo)\cup\cC_{ib}(\ee,\oo)$.
    \item[(iv)] If $\cR(\cO^{nd}(\sigma))$ winds around the torus, then $\sigma\in\cC_{mb}(\ee,\oo)$. 
\end{itemize}
\end{prop}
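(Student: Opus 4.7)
Since $\sigma$ is in particular a saddle, $\Delta H(\sigma)=L+1$ by~\eqref{eq:cheeoo}, and the identity~\eqref{eq:contour} fixes the perimeter budget $P(\sigma)=4(L+1)$. This rigid budget, together with the monotonicity consequence of~\cref{lem:expansion} (any non-monotone cluster or non-optimal arrangement of bars can be smoothed out into a strictly lower-perimeter configuration, so that an optimal path rerouting through the smoothed configuration bypasses $\sigma$ and contradicts essentiality), is the workhorse in each of the four cases. I would address the cases in the order (i), (iv), then (ii)--(iii).

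For case (i), the non-degenerate cluster $C$ lies inside a non-winding rhombus, so the isoperimetric estimate of~\cref{cor:mortadella} applies. A real-area bookkeeping via~\cref{prop:maialona} and the extremal shapes catalogued in~\cref{lem:minP}, combined with the perimeter budget $4(L+1)$, forces $C$ to coincide either with $\cR_{L/2-1,L/2-1}$, or with $\cR_{L/2-1,L/2-1}$ augmented by a single bar of length $k\in\{1,\dots,L/2-1\}$ attached to one of its sides. Any other shape is perimeter-suboptimal and is excluded by an~\cref{lem:expansion} detour. The residual perimeter is then exactly the amount needed for one degenerate piece, whose position is pinned down by essentiality: this matches~\cref{def:essential*} and~\cref{def:essential1}, so $\sigma\in\cC_{ir}(\ee,\oo)\cup\cC_{gr}(\ee,\oo)$.

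For case (iv), $\cR(\cO^{nd}(\sigma))$ winds, so its side lengths fall in the second, third or fourth regime of~\cref{conj:rhombus}. Plugging the corresponding perimeter formula~\eqref{eq:perimeter} into $P(\sigma)=4(L+1)$ and combining with monotonicity rules out the fourth branch (full torus), eliminates the full rhombus $\cR_{L/2-1,L/2}$, and forces $C$ to be either an odd multi-bridge (possibly decorated with bars) or a single odd bridge with bars attached, none of which contains $\cR_{L/2-1,L/2}$. The only perimeter slack left is one degenerate rhombus $\cR_{0,0}$, and essentiality (via another~\cref{lem:expansion} detour) requires it to sit adjacent to an antiknob. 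This matches~\cref{def:essentialnuove} exactly, so $\sigma\in\cC_{mb}(\ee,\oo)$.

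Cases (ii) and (iii) are the main obstacle: $\cR(\cO^{nd}(\sigma))$ does not wind but $\cR(\cO(\sigma))$ does, so the degenerate pieces must extend the odd region across the torus, which sharply constrains their number and location. To split the conclusion, I would use the communication structure of the six essential subsets summarised in~\cref{fig:panorama} and made rigorous by~\cref{lmm:cammino**,lmm:nocammino}: if an optimal path $\omega\ni\sigma$ crosses $\cC_{ir}(\ee,\oo)\cup\cC_{gr}(\ee,\oo)$, then by~\cref{lmm:cammino**}(iv) and~\cref{lmm:nocammino}(ii), the only route toward $\oo$ that is compatible with essentiality and with the winding of $\cR(\cO(\sigma))$ passes through $\cC_{cr}(\ee,\oo)$, forcing $\sigma\in\cC_{cr}(\ee,\oo)$; otherwise $\omega$ must enter $\cC_{sb}(\ee,\oo)\cup\cC_{ib}(\ee,\oo)$, and a perimeter-and-monotonicity analysis analogous to case (i), distinguishing whether a full-length odd column or row is present, places $\sigma$ in $\cC_{sb}(\ee,\oo)$ or $\cC_{ib}(\ee,\oo)$. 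In each branch any deviation from the shapes listed in~\cref{def:essential3}--\cref{def:essentialnuove2} is removed by exhibiting, via~\cref{lem:expansion}, an optimal detour that avoids $\sigma$, contradicting essentiality.
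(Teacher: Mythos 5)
There is a genuine gap: your plan replaces the paper's dynamic, manifold-anchored argument with a static perimeter/isoperimetry classification, and that substitution does not suffice. Being a saddle only fixes $\Delta H(\sigma)=L+1$, i.e.\ $P(\sigma)=4(L+1)$, and this budget is shared by many configurations that are \emph{not} in $\cC^*(\ee,\oo)$ — for instance a smaller non-winding cluster accompanied by several scattered degenerate vacancies (a saddle lying on a manifold $\cV_m$ with $m<m^*$), or a correct cluster whose degenerate $\cR_{0,0}$ sits far from any antiknob. The paper does not exclude these by isoperimetry: it first pins down, via \cref{lmm:gorgonzola} and \eqref{eq:stimeoe}, what every non-backtracking optimal path must look like on the single critical manifold $\cV_{m^*}$ (this is where \cref{prop:maialona} is actually usable, because the real area is then fixed at $\tfrac{L^2}{2}-L+1$), and then propagates forward and backward along the path using the fact that at energy ceiling $L+1$ the only admissible moves are the alternating pair ``add a particle in the unique unblocked odd site / remove an even particle at distance one from an antiknob''. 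It is this forced-move analysis that (a) places the degenerate pieces where Definitions \ref{def:essential*}--\ref{def:essentialnuove2} require them, (b) shows configurations with the vacancy elsewhere are dead ends that no optimal path can continue from, and (c) separates cases (ii) and (iii). Your appeal to ``pinned down by essentiality'' and ``excluded by a \cref{lem:expansion} detour'' skips exactly this: \cref{lem:expansion} only yields a path of height $\leq H(\sigma)+1$ to a rhomboidal configuration, and rerouting through it does not by itself contradict essentiality — to declare $\sigma$ unessential you must exhibit, for the paths through $\sigma$, an alternative optimal path $\omega'$ with $S(\omega')\subseteq S(\omega)\setminus\{\sigma\}$, which the paper does explicitly via the reference paths of \cref{lmm:cammino*} and which your sketch never verifies.

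A second, related gap is in cases (ii)--(iii): from \cref{lmm:cammino**}(iv) and \cref{lmm:nocammino}(ii) you conclude that the path ``passes through $\cC_{cr}(\ee,\oo)$, forcing $\sigma\in\cC_{cr}(\ee,\oo)$'', but the fact that the path visits $\cC_{cr}(\ee,\oo)$ somewhere does not place the particular saddle $\sigma$ (characterized only by its winding properties) in that set. The paper closes this by starting from the last $\cC_{gr}(\ee,\oo)$-configuration on $\cV_1$ (respectively, from the first $\cC_{ib}(\ee,\oo)$-configuration on $\cV_{\bar m}$, argued through the time-reversed path) and showing move by move that every saddle encountered with the stated winding pattern lies in $\cC_{cr}(\ee,\oo)$ (respectively in $\cC_{sb}(\ee,\oo)\cup\cC_{ib}(\ee,\oo)$). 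Without this propagation step, and without the exclusion of saddles on manifolds below $\cV_{\bar m}$ via \cref{lmm:cammino*}(ii), the case split you propose remains unproved.
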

We observe that these four cases (i)--(iv) listed in~\cref{prp:vegetariana2} cover all the possibilities and thus form a partition of the set $\cG(\ee,\oo)$ of essential saddles for the transitions $\ee \rightarrow \oo$.

Before presenting the proof, note that given a configuration $\sigma \in \cS(\ee,\oo)$, if we know on which manifold $\cV_m$ it lies, then the quantities $e(\sigma)$ and $o(\sigma)$ are uniquely determined and can be explicitly calculated as 
\begin{align}\label{eq:stimeoe}
    & o(\sigma)=\frac{m}{2}+\frac{L^2}{4}-\frac{L+1}{2} \qquad \text{and} \qquad e(\sigma)=-\frac{m}{2}+\frac{L^2}{4}-\frac{L+1}{2}.
\end{align}
This readily follows from the fact that $\Delta H(\sigma) = L+1=-e(\sigma)-o(\sigma)+\frac{L^2}{2}$ and that $m=-e(\sigma)+o(\sigma)$.
Furthermore, since $\sigma \in \cS(\ee,\oo)$, $\Delta H(\sigma) = L+1$ and $m$ has to be an odd integer by~\cref{lmm:manifold}(b).

To prove \cref{prp:vegetariana2}, we make use of an additional lemma (whose proof is deferred to \cref{sec:proofmanifold}), which characterizes the intersection between any optimal path and a specific manifold, namely $\cV_{m^*}$ with $m^*:=3-L$. 

\begin{lem}[Geometrical properties of the saddles on the manifold $\cV_{m^*}$] \label{lmm:gorgonzola} 
Any non-backtracking optimal path $\omega: \ee \to \oo$ visits a configuration $\s\in\cV_{m^*}$ that satisfy one of the following properties:
\begin{itemize}
\item[(i)] If both $\cR(\cO^{nd}(\sigma))$ and $\cR(\cO(\sigma))$ do not wind around the torus, then $\sigma\in\cC_{ir}(\ee,\oo)$.
\item[(ii)] If $\cR(\cO^{nd}(\sigma))$ does not wind around the torus but $\cR(\cO(\sigma))$ does, then $\sigma\in\cC_{ib}(\ee,\oo)$.
\item[(iii)] If both $\cR(\cO^{nd}(\sigma))$ and $\cR(\cO(\sigma))$ wind around the torus, then $\sigma\in\cC_{mb}(\ee,\oo)$. 
\end{itemize}
\end{lem}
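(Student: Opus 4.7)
The plan is to combine the monotonic structure of non-backtracking optimal paths with the isoperimetric results of \cref{sec:defaux}. By \cref{lmm:manifold}(a), $\omega$ must intersect every manifold $\cV_m$; since it is non-backtracking, the $m$-value is strictly increasing along $\omega$, so $\omega$ visits $\cV_{m^*}$ at a unique configuration $\sigma$, reached after exactly $n_*=m^*+L^2/2=L^2/2-L+3$ steps from $\ee$. On this manifold the real area is pinned down by
\[
\tilde n(\sigma)=o(\sigma)+\tilde e(\sigma)=L^2/2+m^*=L^2/2-L+3,
\]
while the perimeter and energy are related through $P(\sigma)=4(\tilde e(\sigma)-o(\sigma))=4\,\Delta H(\sigma)$ by \eqref{eq:contour}.

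The first step is to show $\Delta H(\sigma)=L+1$, so that $\sigma$ is a saddle. Optimality immediately gives $\Delta H(\sigma)\leq L+1$. For the reverse inequality I plan to upgrade \cref{prop:maialona}: its unique perimeter-minimizer at real area $L^2/2-L+1$ is $\cR_{L/2-1,L/2-1}$ with $P=4(L-1)$; passing to the larger real area $L^2/2-L+3$ requires adding at least two units, each of which—whether realized as an isolated $\cR_{0,0}$, as a $\cR_{1,0}$ or $\cR_{0,1}$, or through enlarging the main cluster by attaching an odd particle that empties at least one new even neighbor—contributes at least $4$ additional units of perimeter via the identity \eqref{eq:identitypC}. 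Hence $P(\sigma)\geq 4(L-1)+8=4(L+1)$, so $\Delta H(\sigma)=L+1$.

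The second step is the case analysis driven by the winding behavior of $\cR(\cO^{nd}(\sigma))$ and $\cR(\cO(\sigma))$. In case (i) neither rhombus winds; decomposing $\cO(\sigma)=\cO^{nd}(\sigma)\sqcup\cO^{deg}(\sigma)$ and applying the saturation of the bound above, the uniqueness clause of \cref{prop:maialona} forces $\cO^{nd}(\sigma)=\cR_{L/2-1,L/2-1}$ and the degenerate part to have real area exactly $2$ and perimeter exactly $8$, which can only be realized by a single degenerate rhombus $D\in\{\cR_{1,0},\cR_{0,1}\}$; the fact that $\sigma$ sits on an optimal path forces $D$ at distance two from $C$, matching \cref{def:essential*} and yielding $\sigma\in\cC_{ir}(\ee,\oo)$. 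In case (ii) the winding of $\cR(\cO(\sigma))$ but not of $\cR(\cO^{nd}(\sigma))$ forces a collinear alignment of the degenerate contributions with $\cO^{nd}(\sigma)$; using \cref{lem:complrombo} to translate winding into an explicit set of $L/2$ collinear odd sites, and invoking the filling algorithms of \cref{lem:expansion} to rule out any broken or shorter diagonals in $C$ (they would produce a strictly cheaper path, contradicting optimality), one recovers the columns-and-bars shape of \cref{def:essentialnuove2} and concludes $\sigma\in\cC_{ib}(\ee,\oo)$. Case (iii) is analogous: both rhombi wind, so $\cO^{nd}(\sigma)$ itself contains an entire odd bridge, and combining monotonicity of $C$ with the tight perimeter $4(L+1)$ and the bookkeeping of bars attached to the bridge recovers the $m$-uple bridge structure of \cref{def:essentialnuove}, giving $\sigma\in\cC_{mb}(\ee,\oo)$.

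The main obstacle will be the isoperimetric upgrade that powers the first step: one must verify that no exotic arrangement of clusters at real area $L^2/2-L+3$ achieves perimeter strictly below $4(L+1)$, which requires tracking the interaction between non-degenerate and degenerate components and applying \cref{cor:mortadella} term by term. A secondary obstacle is the enumeration in cases (ii) and (iii), where all monotone configurations of bars and bridges compatible with the prescribed winding and the exact perimeter $4(L+1)$ must be classified; here the filling algorithms of \cref{sec:filling} do most of the heavy lifting, as any deviation from the shapes in \cref{def:essentialnuove2}--\cref{def:essentialnuove} can be straightened into a cheaper path, violating optimality.
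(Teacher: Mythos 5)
Your overall strategy (pin the real area via the manifold $\cV_{m^*}$, feed it into an isoperimetric bound, then classify by the winding of $\cR(\cO^{nd}(\sigma))$ and $\cR(\cO(\sigma))$) is the same skeleton as the paper's proof, but the steps that carry the actual weight are either missing or rest on incorrect mechanisms. First, the ``isoperimetric upgrade'': the inference that passing from real area $\frac{L^2}{2}-L+1$ to $\frac{L^2}{2}-L+3$ costs at least $8$ units of perimeter because ``each added unit of real area contributes at least $4$ of perimeter'' is false as a principle --- attaching an odd particle to a cluster adds $2$ or $3$ units of real area while changing the perimeter by $0$ or $+4$ (this is exactly what the filling algorithms of \cref{lem:expansion} exploit), and a configuration at the larger real area need not contain the minimizer at the smaller one, so you cannot argue by perturbing $\cR_{\frac L2-1,\frac L2-1}$. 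The real competitors at this real area are winding shapes (bridges with bars, diagonal bands), whose excess $\tilde e-o$ equals $L$ exactly; they are eliminated only by the parity constraint $\Delta H(\sigma)\equiv m^*\pmod 2$ of \cref{lmm:manifold}(b), which you never invoke, and \cref{prop:maialona} as stated does not cover them. You yourself flag this verification as the ``main obstacle'', so your Step 1 ($\Delta H(\sigma)=L+1$) is not established; the paper instead works directly with $\Delta H\le L+1$, the exact values of $o(\sigma),e(\sigma)$ on $\cV_{m^*}$, and the decomposition into degenerate and non-degenerate clusters (its inequalities \eqref{eq:salsiccina}--\eqref{eq:salsiccina2}) to kill each structural subcase.

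Second, the classification itself has gaps. In case (i), ``real area $2$ and perimeter $8$ can only be a single $\cR_{1,0}$ or $\cR_{0,1}$'' is wrong: two disjoint degenerate rhombi $\cR_{0,0}$ realize the same counts, and the requirement in \cref{def:essential*} that $D$ sit at distance two from $C$ is not a perimeter statement at all. The paper excludes these configurations dynamically (its subcase (3)): from $\cR_{\frac L2-1,\frac L2-1}$ with two far $\cR_{0,0}$'s, a non-backtracking optimal path has no admissible move into the next manifold without exceeding $L+1$; you only assert that ``optimality forces'' the right position of $D$. Third, in cases (ii) and (iii) your exclusion mechanism --- a deviant shape ``can be straightened into a cheaper path, violating optimality'' --- does not work logically: the existence of a cheaper alternative path does not contradict the optimality of the path you are following, since optimality only caps its height at $\Phi(\ee,\oo)$. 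What actually rules out broken or shorter diagonals, several non-degenerate clusters, or misplaced degenerate sites is again energy/manifold bookkeeping (for instance, when a bridge is present, a row containing odd particles of two distinct clusters forces $\Delta H(\sigma)>L+1$, the paper's subcase (1) of case (iii)), together with the dead-end argument above; none of this appears in your outline, so cases (ii) and (iii) remain unproved.
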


\begin{proof}[Proof of \cref{prp:vegetariana2}]

\noindent
\\
{\bf Case (i).} Let $\sigma$ be an essential saddle such that $\cR(\cO^{nd}(\sigma))$ and $\cR(\cO(\sigma))$ do not wind around the torus. If $\sigma\in\cV_{m^*}$, by \cref{lmm:gorgonzola}(i) we know that $\s\in\cC_{ir}(\ee,\oo)$. Otherwise, we suppose that $\sigma\notin\cV_{m^*}$. 
First, we observe that $\sigma \not \in \cV_{m}$ with $m<m^*$, otherwise the saddle $\sigma$ is not essential. 
Indeed, every optimal path from $\ee$ to $\oo$ has to cross a configuration $\bar\sigma\in\cV_{m^*}$ in $\cC_{ir}(\ee,\oo)$ thanks to \cref{lmm:gorgonzola}(i). Thus, we can write a general optimal path $\omega=(\ee,\omega_1,...,\omega_k,\sigma,...,\bar\sigma,\omega_{k+1},...,\omega_{k+m},\oo)$ and we can define the path $\omega'=(\ee,\tilde\omega_1,...,\tilde\omega_n,\bar\sigma,\omega_{k+1},...,\omega_{k+m},\oo)$, where $\arg\max_{\xi\in\{\ee,\tilde\omega_1,...,\tilde\omega_n,\bar\sigma\}}H(\xi)=\{\bar\sigma\}$. This path $\o'$ exists thanks to \cref{lmm:cammino*}(i). Thus, we are left to analyze the case $\sigma \in \cV_{m}$ with $m> m^*$. We need to show that any essential saddle crossed afterward belongs to the set $\cC_{gr}(\ee,\oo)$. By using again \cref{lmm:gorgonzola}, the path $\omega$ crosses a configuration $\sigma\in\cC_{ir}(\ee,\oo)$ to reach $\oo$. Starting from it, there is a unique possible move to lower the energy towards $\oo$ along the path $\omega$, that is adding a particle in the unique unblocked empty odd site. Afterward, the unique possible move is to remove a particle from an even site. If this site is at a distance greater than one from an antiknob, then there is no more allowed  move. Otherwise, the resulting configuration belongs to the set $\cC_{gr}(\ee,\oo)$. By iterating this pair of moves until the shorter diagonal of the rhombus is completely filled, we obtain that all the saddles that are crossed belong to $\cC_{gr}(\ee,\oo)$. Moreover, from this point onwards, it is only possible to remove a particle from an even site at distance one from the antiknob, obtaining a configuration in $\cC_{gr}(\ee,\oo)\cap\cV_{1}$.

\medskip
\noindent
{\bf Case (ii).} By assumption, the path $\omega$ crosses the set $\cC_{ir}(\ee,\oo)\cup\cC_{gr}(\ee,\oo)$. 
Without loss of generality, we may consider $\omega$ as a non-backtracking path. If this is not the case, we can apply the following argument to the last configuration visited by the path in the manifold $\cV_{m^*}$.
In particular, in view of the properties of the path $\omega$ shown in case (i), we know that the last configuration crossed in $\cC_{gr}(\ee,\oo)$ belongs to $\cV_1$ and it is composed of a unique non-degenerate cluster $\cR_{\frac{L}{2}-1,\frac{L}{2}}$ with a degenerate cluster $\cR_{0,0}$ at distance one from the antiknob. Starting from it, there is a unique possible move to lower the energy towards $\oo$ along the path $\omega$, that is adding a particle in the unique unblocked empty odd site. Afterward, the unique possible move is to remove a particle from an even site. The resulting configuration is in $\cC_{cr}(\ee,\oo)$. By iterating this pair of moves until the shorter diagonal of the rhombus is completely filled, we obtain that all the saddles that are crossed belong to $\cC_{cr}(\ee,\oo)$.

\medskip
\noindent
{\bf Case (iii).} As in case (ii), we assume that the path $\omega$ is non-backtracking. By the assumption, the path $\o$ does not cross the set $\cC_{ir}(\ee,\oo)\cup\cC_{gr}(\ee,\oo)$, thus for \cref{lmm:gorgonzola}(ii) the path crosses the set $\cC_{ib}(\ee,\oo)$, say in the configuration $\bar\eta$. Let $\cV_{\bar m}$ the first manifold containing a configuration in $\cC_{ib}(\ee,\oo)$. 

Consider first the case in which the saddle $\s$ is crossed by the path $\o$ on the manifolds $\cV_m$, with $\bar m\leq m<m^*$. Since $\o$ crosses the set $\cC_{ib}(\ee,\oo)$, we will show that the saddle $\sigma$ belongs to the set $\cC_{sb}(\ee,\oo)\cup\cC_{ib}(\ee,\oo)$. To this end, we need to consider the time-reversal of the path $\omega=(\ee,\omega_1,...,\omega_k,\bar\eta,...,\oo)$, where $\bar\eta\in\cC_{ib}(\ee,\oo)$. Starting from $\bar\eta$, since $\Delta H(\bar\eta)=L+1$, the energy of the configuration $\omega_k$ is less than that of $\bar\eta$. Moreover, the unique possible move is adding a particle in the unique empty even site at distance one from an antiknob. Then, the unique possible move from $\omega_k$ to $\omega_{k-1}$ is removing a particle from an occupied odd site. The configuration $\omega_{k-1}$ belongs to the set $\cC_{sb}(\ee,\oo)$ if it contains at least one bridge, otherwise, it belongs to the set $\cC_{ib}(\ee,\oo)$. By iterating this argument we obtain the desired claim. 

Consider now the case in which the saddle $\s$ is crossed by the path $\o$ on the manifolds $\cV_m$, with $ m>m^*$. Since $\o$ crosses the set $\cC_{ib}(\ee,\oo)$, we will show that the saddle $\sigma$ belongs to the set $\cC_{sb}(\ee,\oo)\cup\cC_{ib}(\ee,\oo)$. Note that the unique admissible moves are the following: add a particle in the antiknob and then remove a particle from an even site at distance one from an antiknob. By iterating this couple of moves we get that the resulting configuration belongs to the set $\cC_{ib}(\ee,\oo)$ since $\cR(\cO^{nd}(\s))$ does not wind around the torus.

It remains to consider the case $\sigma\in\cV_m$, with $m<\bar m$. We will prove that any such $\sigma$ is not essential. Indeed, any non-backtracking optimal path crossing a configuration in $\cC_{ib}(\ee,\oo)$ has crossed a configuration in $\cC_{sb}(\ee,\oo)$ before by using the same argument as in case (iii) of this proof for $\bar m\leq m<m^*$. Thus, we can write $\omega=(\ee,\omega_1,...,\omega_k,\sigma,...,\tilde\s,...,\bar\sigma,\omega_{k+1},...,\omega_{k+m},\oo)$ and we define the path $\omega'=(\ee,\omega_1',...,\omega_n',\tilde\sigma,...,\bar\s,\omega_{k+1},...,\omega_{k+m},\oo)$, where $\tilde\sigma$ is a configuration in $\cC_{sb}(\ee,\oo)$, $\bar\sigma\in\cV_{\bar m}$ is a configuration in $\cC_{ib}(\ee,\oo)$ thanks to \cref{lmm:gorgonzola}(ii) and $\arg\max_{\xi\in\{\ee,\omega_1',...,\omega_n',\tilde\sigma\}}H(\xi)=\{\tilde\sigma\}$. This path $\o'$ exists thanks to \cref{lmm:cammino*}(ii) and this concludes case (iii).

\medskip
\noindent
{\bf Case (iv).} By \cref{lmm:gorgonzola}(iii), we know that any optimal path $\omega\in(\ee\rightarrow\oo)_{\mathrm{opt}}$ crosses the manifold $\cV_{m^*}$ in a configuration $\bar\sigma$ belonging to the set $\cC_{mb}(\ee,\oo)$.

If the essential saddle $\s\in\cV_{m^*}$, then we deduce that $\s\in\cC_{mb}(\ee,\oo)$.

Suppose now that the saddle $\sigma$ belongs to the manifold $\cV_m$, with $m>m^*$. Starting from such a saddle $\bar\sigma$, there is a unique possible move to lower the energy towards $\oo$ along the path $\omega$, that is add a particle in the unique unblocked empty odd site. Afterward, the unique possible move is to remove a particle from an even site. If this site is at a distance greater than one from an antiknob, then there is no more possible move to reach $\oo$ in such a way the path $\o$ is optimal. Otherwise, the resulting configuration belongs to the set $\cC_{mb}(\ee,\oo)$. Indeed, by construction, we deduce that the resulting non-degenerate odd cluster is still monotone due to the properties of the bars attached to each bridge. By iterating this pair of moves until there is a row or a column which is not a bridge, we obtain that all the saddles that are crossed belong to $\cC_{mb}(\ee,\oo)$. Finally, from this point onwards, it is only possible to remove a particle from an even site at distance one from the antiknob, obtaining the last configuration in $\cC_{mb}(\ee,\oo)$. Afterward, the energy only decreases and therefore no more saddles are crossed.

Suppose now that the saddle $\sigma$ belongs to the manifold $\cV_m$, with $m<m^*$. Note that, starting from such $\bar\sigma\in\cC_{mb}(\ee,\oo)$, the unique admissible moves to get $\s$ are the following: add a particle in the unique empty even site at distance one from an antiknob and afterward remove a particle from an occupied odd site. By iterating this couple of moves we get that the resulting configuration belongs to the set $\cC_{mb}(\ee,\oo)$ as long as $\cR(\cO^{nd}(\s))$ winds around the torus, otherwise the saddle $\s$ does not satisfy the properties in the statement. 
\end{proof}

\subsubsection{Communications between essential gates}
\label{sec:thm2}

In this subsection, we show that the six subsets composing the set $\cC^*(\ee,\oo)$ communicate as illustrated in~\cref{fig:panorama}. The next proposition makes it precise.

\begin{prop}\label{cor:entrance}
Any non-backtracking optimal path $\omega:\ee\to\oo$ crosses the set $\cC^*(\ee,\oo)$ in one of the following ways:
\begin{itemize}
    \item[(i)] $\omega$ passes first through $\cC_{ir}(\ee,\oo)$, then through $\cC_{gr}(\ee,\oo)$, and finally through $\cC_{cr}(\ee,\oo)$;
    \item[(ii)] $\omega$ passes first through $\cC_{sb}(\ee,\oo)$, then through $\cC_{ib}(\ee,\oo)$ and afterwards through $\cC_{gr}(\ee,\oo) \cup \cC_{mb}(\ee,\oo)$. If $\omega$ passes $\cC_{gr}(\ee,\oo)$, then it eventually has to visit $\cC_{cr}(\ee,\oo)$, otherwise it does not have to; 
    \item[(iii)] $\omega$ passes first through $\cC_{sb}(\ee,\oo)$ and then through $\cC_{mb}(\ee,\oo)$.
\end{itemize}
\end{prop}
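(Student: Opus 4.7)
The plan is to combine the saddle classification of \cref{prp:vegetariana2} with \cref{lmm:gorgonzola}, which pins down where any non-backtracking optimal path must cross the distinguished manifold $\cV_{m^*}$ with $m^*=3-L$. Since $\omega$ is non-backtracking, \cref{lmm:manifold}(a) ensures it visits each manifold exactly once; in particular, $\omega\cap\cV_{m^*}$ consists of a single configuration $\bar\sigma$, and \cref{lmm:gorgonzola} forces $\bar\sigma$ to lie in exactly one of $\cC_{ir}(\ee,\oo)$, $\cC_{ib}(\ee,\oo)$, or $\cC_{mb}(\ee,\oo)$. These three mutually exclusive possibilities will yield items (i), (ii), and (iii) of the proposition, respectively.

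For case (i), suppose $\bar\sigma\in\cC_{ir}(\ee,\oo)$. The backward portion of the proof of \cref{prp:vegetariana2}(i) shows that no essential saddle of $\omega$ can live on a manifold $\cV_m$ with $m<m^*$, since any such hypothetical saddle can be bypassed by the path constructed in \cref{lmm:cammino*}(i), contradicting essentiality. Forward of $\bar\sigma$, the inductive inspection of the only admissible moves at optimal energy performed inside the proofs of \cref{prp:vegetariana2}(i)--(ii) places the subsequent saddles first in $\cC_{gr}(\ee,\oo)$ and then in $\cC_{cr}(\ee,\oo)$. Using \eqref{eq:stimeoe} together with \cref{def:essential*,def:essential1} one verifies that $\cC_{ir}$, $\cC_{gr}$, $\cC_{cr}$ occupy disjoint manifold ranges arranged in strictly increasing order of $m$, and hence the non-backtracking property of $\omega$ pins the ordering $\cC_{ir}\to\cC_{gr}\to\cC_{cr}$ claimed in (i).

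The sub-cases $\bar\sigma\in\cC_{ib}(\ee,\oo)$ and $\bar\sigma\in\cC_{mb}(\ee,\oo)$ correspond to items (ii) and (iii). In either situation, the analysis inside the proofs of \cref{prp:vegetariana2}(iii)--(iv) shows that the ascending portion of $\omega$ must previously visit a configuration in $\cC_{sb}(\ee,\oo)$, since every $\cC_{ib}$ or $\cC_{mb}$ configuration features a nearly-complete odd bridge whose construction forces the path to traverse a column of $L/2-1$ aligned odd particles of type $\cC_{sb}$. When $\bar\sigma\in\cC_{mb}$, \cref{prp:vegetariana2}(iv) combined with \cref{lmm:nocammino}(iii)--(iv) confines every subsequent essential saddle to $\cC_{mb}$, yielding (iii). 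When $\bar\sigma\in\cC_{ib}$, the continuation forward of $\bar\sigma$ splits according to whether $\cR(\cO^{nd}(\cdot))$ eventually winds around the torus: if it does, \cref{prp:vegetariana2}(iv) routes the path into $\cC_{mb}$; otherwise the path can continue at optimal energy only by entering $\cC_{gr}(\ee,\oo)$, and then the case-(i) analysis forces a subsequent passage through $\cC_{cr}(\ee,\oo)$. The exclusion of any direct $\cC_{ib}\to\cC_{cr}$ jump at energy $\le-\frac{L^2}{2}+L+1$, provided by \cref{lmm:nocammino}(ii), explains the conditional clause in (ii).

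The main obstacle is the bookkeeping in case (ii), where one has to exclude hybrid trajectories such as $\cC_{ib}\to\cC_{mb}\to\cC_{gr}$ or $\cC_{ib}\to\cC_{cr}$ at optimal energy; this exclusion rests on the non-communication results of \cref{lmm:nocammino}, while \cref{lmm:cammino*,lmm:cammino**,lmm:cammino6} guarantee that every admissible transition depicted in \cref{fig:panorama} is in fact realized by some optimal path, completing the description.
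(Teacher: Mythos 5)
Your opening differs from the paper's. The paper first argues that any non-backtracking optimal path must visit at least one essential saddle (if all of its saddles were unessential, iterating the definition of unessential saddle leads to a contradiction), then invokes \cref{prp:vegetariana2} and defers the ordering (i)--(iii) to Lemmas \ref{lmm:cammino*}--\ref{lmm:nocammino}. You instead enter through \cref{lmm:gorgonzola}: since a non-backtracking path meets $\cV_{m^*}$ exactly once, its crossing lies in $\cC_{ir}(\ee,\oo)\cup\cC_{ib}(\ee,\oo)\cup\cC_{mb}(\ee,\oo)$, which both shows that the path meets $\cC^*(\ee,\oo)$ and localizes where. This is a legitimate and arguably more informative entry point (it also sidesteps the unessential-saddle iteration), and your manifold computation showing that $\cC_{ir}$, $\cC_{gr}$, $\cC_{cr}$ sit on disjoint, increasing manifolds is a nice addition; the remaining ordering work relies on the same toolkit (the move-by-move analyses in the proof of \cref{prp:vegetariana2} and Lemmas \ref{lmm:cammino*}--\ref{lmm:nocammino}) that the paper itself invokes.

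Two points need repair, however. First, in case (i) you exclude saddles on manifolds $\cV_m$ with $m<m^*$ by arguing they could not be \emph{essential}; but what item (i) requires is that the path meets none of the six sets before its $\cC_{ir}$ crossing, in particular not $\cC_{sb}(\ee,\oo)$ or $\cC_{ib}(\ee,\oo)$, which live on manifolds below $\cV_{m^*}$. Essentiality is the wrong filter: the bypass construction only shows that a hypothetical saddle would be inessential, not that the given path avoids it, and $\cC_{sb}$ consists precisely of saddles that are ultimately essential. Ruling out a $\cC_{sb}$ or $\cC_{ib}$ visit prior to a $\cC_{ir}$ crossing must instead come from the dynamics: along a non-backtracking path odd particles are only added and even particles only removed, and from a $\cC_{sb}$ saddle the unique energy-lowering continuation either completes a bridge or places a particle outside any rhombus $\cR_{\frac{L}{2}-1,\frac{L}{2}-1}$ containing the column, so the $\cV_{m^*}$ crossing cannot then lie in $\cC_{ir}(\ee,\oo)$. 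Second, your trichotomy by the identity of the $\cV_{m^*}$ crossing does not map one-to-one onto (i)--(iii): a path whose crossing lies in $\cC_{mb}(\ee,\oo)$ may have visited $\cC_{ib}(\ee,\oo)$ on an earlier manifold, in which case it is of type (ii) (the $\cC_{ib}\to\cC_{mb}$ branch), not (iii); the $\cC_{mb}$ case must therefore be split according to whether $\cC_{ib}$ was visited before. Neither issue breaks your strategy—both are resolved by the backward analysis you already cite from \cref{prp:vegetariana2}(iii)--(iv)—but as written the classification is not airtight.
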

\begin{proof}
Consider a non-backtracking optimal path $\omega:\ee\to\oo$. If $\omega$ visits at least one essential saddle, then we conclude by using \cref{prp:vegetariana2}. Thus, suppose that $\omega$ visits unessential saddles only, say $\s_1,...,\s_n$. By definition of unessential saddle, we know that there exists another optimal path $\omega':\ee\to\oo$ such that $S(\o')\subseteq\{\s_1,...,\s_{n-1}\}$, say $S(\o')\subseteq\{\s_1,...,\s_m\}$ with $m\leq n-1$. Iterating this argument, we deduce that there exists an optimal path $\bar\o:\ee\to\oo$ such that $S(\bar\o)=\{\s_1\}$ and this is a contradiction with the assumption that $\s_1$ is an unessential saddle. Thus, we conclude that any optimal path $\omega:\ee\to\oo$ visits the set $\cC^*(\ee,\oo)$. It remains to prove that the entrance in $\cC^*(\ee,\oo)$ occurs in one of the ways described in (i)--(iii), which easily follows by combining Lemmas \ref{lmm:cammino*}--\ref{lmm:nocammino}.
\end{proof}

\subsubsection{All the saddles in $\cC^*(\ee,\oo)$ are essential}
\label{sec:inclusione1}

In this first part of the proof, we will prove that every $\s \in\mathcal C^*(\ee,\oo)$ is an essential saddle by constructing a non-backtracking optimal path $\o: \ee \to \oo$ that visits $\s$.

Leveraging the fact that $\s \in\mathcal C^*(\ee,\oo)$, we construct the desired non-backtracking path $\o$ as a concatenation of two paths as follows. 
First, using a suitable concatenation of the paths described in Lemmas \ref{lmm:cammino**}--\ref{lmm:cammino6}, we can define a path $\o_1$ that starts from the considered configuration $\s$ to the initial cycle $\mathcal{C}_\ee$.
We construct then another path $\o_2$ that goes from $\s$ to the target cycle $\mathcal{C}_\oo$ as a suitable concatenation of the paths described in Lemmas \ref{lmm:cammino*}--\ref{lmm:cammino**}.
The desired non-backtracking path $\o$ is the time-reversal of $\o_1$ concatenated with $\o_2$ and it is easy to show that it is also optimal. 

Assume now by contradiction that $\s$ is not essential, which means that there must exist another optimal path $\o' \in (\ee \to \oo)_{\textrm{opt}}$ such that $S(\o') \subset S(\o) \setminus \{ \s \}$. Recall that by Lemma~\ref{lmm:manifold}(a), such a path $\o'$ that avoids $\s$ still needs to visit the manifold $\cV_{m(\sigma)}$ where $\s$ lives at least once. Let $\h$ be any such configuration in $\cV_{m(\sigma)}\cap \o'$. We claim that such a configuration $\h$ must satisfy 
\[
    \Delta H(\h)\equiv 1 \pmod 2.
\]
This claim readily follows from Lemma~\ref{lmm:manifold}(b) in combination with the facts that $L$ is even and $\Delta H(\s) = L+1$ by construction.

If $\Delta H(\h) \geq L+3$, then $\o'$ is not an optimal path, since $\Phi_{\o'}- H(\ee) \geq L+3 > L+1 = \Phi(\ee,\oo)-H(\ee)$. 

On the other hand, if $\Delta H(\h)\leq L-1$, then from Lemma~\ref{lem:reducible} it follows that $\h$ belongs to one of the two initial cycles.  
\cref{cor:entrance} ensures that every non-backtracking optimal path crosses $\cC^*(\ee,\oo)$ in one of the three ways (i)--(iii) described therein, so that also the optimal path $\o'$ passing through $\eta$ has to visit $\cC^*(\ee,\oo)$. Since, by assumption, the path $\o'$ has to avoid the saddle $\s$, we deduce that there exists another saddle $\tilde\eta$ obtained starting from $\eta$ that does not belong to $\cV_{m(\sigma)}$. In particular, the two paths cross the set $\cC^*(\ee,\oo)$ in three different ways according to \cref{cor:entrance}(i)--(iii).
Thus, we deduce that $S(\o') \not\subset S(\o) \setminus \{ \s \}$.

Thus, in view of the parity of $\Delta H(\h)$, we must have $\Delta H(\h) = L+1$, but then $\h$ is a saddle and, by construction, it did not belong to $S(\o)$ and thus $S(\o') \not\subset S(\o) \setminus \{ \s \}$.

\section{Proof of auxiliary results}
\label{sec:auxproofs}
In this section, we give the proof of some auxiliary results stated in Sections \ref{sec:defaux}--\ref{sec:proofthm}.

\subsection{Results on the perimeter of an odd region}
\label{sec:oddproof}

\begin{proof}[Proof of \cref{lem:expansion}]
The proof revolves around the simple idea that using the filling algorithms introduced in \cref{sec:filling} we can expand the odd cluster $C$ (\ie progressively increase the number of the occupied odd sites in $C$) in such a way that $\cR$ remains the surrounding rhombus and the energy of all the configurations along such a path never exceeds $H(\s)+1$. Since by assumption $\s \neq \s'$, the odd cluster $C$ cannot coincide with $\cR$, in view of conditions (i), (ii), and (iii). Thus, $C$ contains at least a broken diagonal or a shorter diagonal than those of the surrounding rhombus. 

We can define the desired path $\omega$ as the concatenation of the two paths returned by the filling algorithms $\tilde\omega$ and $\bar\omega$. If $C$ has no broken diagonal, we take $\tilde\omega$ empty. If $C$ has no shorter diagonal, then $C$ has already the shape of a rhombus and therefore we take $\bar\omega$ empty. By the definition of these two paths, the energy increases by one only if an even site has to be emptied, but all these moves are followed by the addition of a particle in an antiknob. Therefore, the energy along the path $\omega$ increases by at most one with respect to the starting configuration $\sigma$. This procedure ends when $C$ coincides with $\cR$, which implies that the resulting configuration is $\sigma'$. 

To conclude the proof, we need to show the properties claimed for the perimeter of $\sigma$. If $C$ contains $m\geq1$ broken diagonals, we argue as follows. 
Since the cluster $C$ is connected, all the empty odd sites in which the diagonals are broken are antiknobs, i.e., they have $n \in \{3,4\}$ neighboring even sites belonging to $C$, see~\cref{fig:ex1}. We distinguish the two following cases. If $n=3$, then we first need to remove a particle from the unique neighboring occupied even site, like the site $v$ represented in~\cref{fig:ex1} on the left. 
\begin{figure}[h!]
\centering
    \begin{tikzpicture}[scale=0.55]
    \newcommand\radius{0.2}
    \newcommand\listodd{(-2,0),(-1,-1),(1,3),(2,4),(-1,3),(0,4),(2,2),(2,6),(1,7),(0,6),(3,5),(4,4),(1,-1),(2,0),(3,1),(4,2),(3,3),(5,3),(-2,2),(-3,3),(0,-2),(4,6),(3,7),(4,8)}
    \newcommand\listeven{(-1,0),(-2,1),(-3,0),(-2,-1),(0,-1),(-1,-2),(3,0),(4,1),(3,0),(2,3),(1,4),(0,3),(1,2),(3,4),(2,5),(-1,4),(-2,3),(-1,2),(0,5),(3,2),(2,1),(3,6),(2,7),(1,6),(1,8),(0,7),(-1,6),(4,5),(5,4),(4,3),(2,-1),(1,0),(1,-2),(5,2),(6,3),(-3,2),(-3,4),(-4,3),(0,-3),(5,6),(4,7),(3,8),(5,8),(4,9)}
    \newcommand\listantiknob{(5,7),(5,5),(1,5),(-1,5),(-2,4),(0,2),(1,1),(-1,1),(-3,1),(0,0),(2,8)}

    \draw[thick,->] (-2.4,6.2)--(-2,5.2);
    \node at (-2.4,6.5) {$v$};
   
    \draw[thick,red,fill=red,opacity=0.4] (-2.5,3.5) rectangle (-1.5,4.5);
        
    \draw[help lines] (-4.5,-3.5) grid (9.5,10.5);
        
    \draw[thick] (-3.5,-0.5)--(-2.5,-0.5)--(-2.5,-1.5)--(-1.5,-1.5)--(-1.5,-2.5)--(-0.5,-2.5)--(-0.5,-3.5) -- (0.5,-3.5)--(0.5,-2.5)--(1.5,-2.5)--(1.5,-1.5)--(2.5,-1.5)--(2.5,-0.5)--(3.5,-0.5)--(3.5,0.5)--(4.5,0.5)--(4.5,1.5)--(5.5,1.5)--(5.5,2.5)--(6.5,2.5)--(6.5,3.5)--(5.5,3.5)--(5.5,4.5)--(4.5,4.5)--(4.5,5.5)--(5.5,5.5)--(5.5,6.5)--(4.5,6.5)--(4.5,7.5)--(5.5,7.5)--(5.5,8.5)--(4.5,8.5)--(4.5,9.5)--(3.5,9.5)--(3.5,8.5)--(2.5,8.5)--(2.5,7.5)--(1.5,7.5)--(1.5,8.5)--(0.5,8.5)--(0.5,7.5)--(-0.5,7.5)--(-0.5,6.5)--(-1.5,6.5)--(-1.5,5.5)--(-0.5,5.5)--(-0.5,4.5)--(-1.5,4.5)--(-1.5,3.5)--(-2.5,3.5)--(-2.5,4.5)--(-3.5,4.5)--(-3.5,3.5)--(-4.5,3.5)--(-4.5,2.5)--(-3.5,2.5)--(-3.5,1.5)--(-2.5,1.5)--(-2.5,0.5)--(-3.5,0.5)--(-3.5,-0.5);
    
    \draw[thick] (-0.5,0.5)--(-0.5,-0.5)--(0.5,-0.5)--(0.5,0.5)--(1.5,0.5)--(1.5,1.5)--(0.5,1.5)--(0.5,2.5)--(-0.5,2.5)--(-0.5,1.5)--(-1.5,1.5)--(-1.5,0.5)--(-0.5,0.5);
    \draw[thick] (0.5,4.5)--(1.5,4.5)--(1.5,5.5)--(0.5,5.5)--(0.5,4.5);

    \foreach \c in \listodd
        \draw[fill=black] \c circle(\radius);
    \foreach \c in \listeven
        \draw[fill=white] \c circle(\radius);
    
\end{tikzpicture}
\hspace{1cm}
\begin{tikzpicture}[scale=0.55]
    \newcommand\radius{0.2}
    \newcommand\listodd{(-2,0),(-1,-1),(1,3),(2,4),(-1,3),(0,4),(2,2),(2,6),(1,7),(0,6),(3,5),(4,4),(1,-1),(2,0),(3,1),(4,2),(3,3),(5,3),(-2,2),(-3,3),(0,-2),(4,6),(3,7),(4,8),(-2,4)}
    \newcommand\listeven{(-1,0),(-2,1),(-3,0),(-2,-1),(0,-1),(-1,-2),(3,0),(4,1),(3,0),(2,3),(1,4),(0,3),(1,2),(3,4),(2,5),(-1,4),(-2,3),(-1,2),(0,5),(3,2),(2,1),(3,6),(2,7),(1,6),(1,8),(0,7),(-1,6),(4,5),(5,4),(4,3),(2,-1),(1,0),(1,-2),(5,2),(6,3),(-3,2),(-3,4),(-4,3),(0,-3),(5,6),(4,7),(3,8),(5,8),(4,9),(-2,5)}
    
    \draw[help lines] (-4.5,-3.5) grid (9.5,10.5);
        
    \draw[thick] (-3.5,-0.5)--(-2.5,-0.5)--(-2.5,-1.5)--(-1.5,-1.5)--(-1.5,-2.5)--(-0.5,-2.5)--(-0.5,-3.5) -- (0.5,-3.5)--(0.5,-2.5)--(1.5,-2.5)--(1.5,-1.5)--(2.5,-1.5)--(2.5,-0.5)--(3.5,-0.5)--(3.5,0.5)--(4.5,0.5)--(4.5,1.5)--(5.5,1.5)--(5.5,2.5)--(6.5,2.5)--(6.5,3.5)--(5.5,3.5)--(5.5,4.5)--(4.5,4.5)--(4.5,5.5)--(5.5,5.5)--(5.5,6.5)--(4.5,6.5)--(4.5,7.5)--(5.5,7.5)--(5.5,8.5)--(4.5,8.5)--(4.5,9.5)--(3.5,9.5)--(3.5,8.5)--(2.5,8.5)--(2.5,7.5)--(1.5,7.5)--(1.5,8.5)--(0.5,8.5)--(0.5,7.5)--(-0.5,7.5)--(-0.5,6.5)--(-1.5,6.5)--(-1.5,5.5)--(-2.5,5.5)--(-2.5,4.5)--(-3.5,4.5)--(-3.5,3.5)--(-4.5,3.5)--(-4.5,2.5)--(-3.5,2.5)--(-3.5,1.5)--(-2.5,1.5)--(-2.5,0.5)--(-3.5,0.5)--(-3.5,-0.5);
    
    \draw[thick] (-0.5,0.5)--(-0.5,-0.5)--(0.5,-0.5)--(0.5,0.5)--(1.5,0.5)--(1.5,1.5)--(0.5,1.5)--(0.5,2.5)--(-0.5,2.5)--(-0.5,1.5)--(-1.5,1.5)--(-1.5,0.5)--(-0.5,0.5);
    \draw[thick] (0.5,4.5)--(1.5,4.5)--(1.5,5.5)--(0.5,5.5)--(0.5,4.5);

    \draw[thick] (-1.5,4.5) rectangle (-0.5,5.5);
    \draw[thick,fill=red,opacity=0.4] (-1.5,4.5) rectangle (-0.5,5.5);

    \draw[thick,->] (-2.4,6.2)--(-2,5.2);
    \node at (-2.4,6.5) {$v$};

    \foreach \c in \listodd
        \draw[fill=black] \c circle(\radius);
    \foreach \c in \listeven
        \draw[fill=white] \c circle(\radius);
    
\end{tikzpicture}
\caption{Example of a configuration $\sigma$ as in the statement of \cref{lem:expansion} (on the left), where we highlight in red the site containing the target antiknob, and the configuration obtained from $\sigma$ by filling it after removing a particle from the even site $v$ (on the right), with highlighted in red the site containing the next target antiknob.}
\label{fig:ex1}
\end{figure}
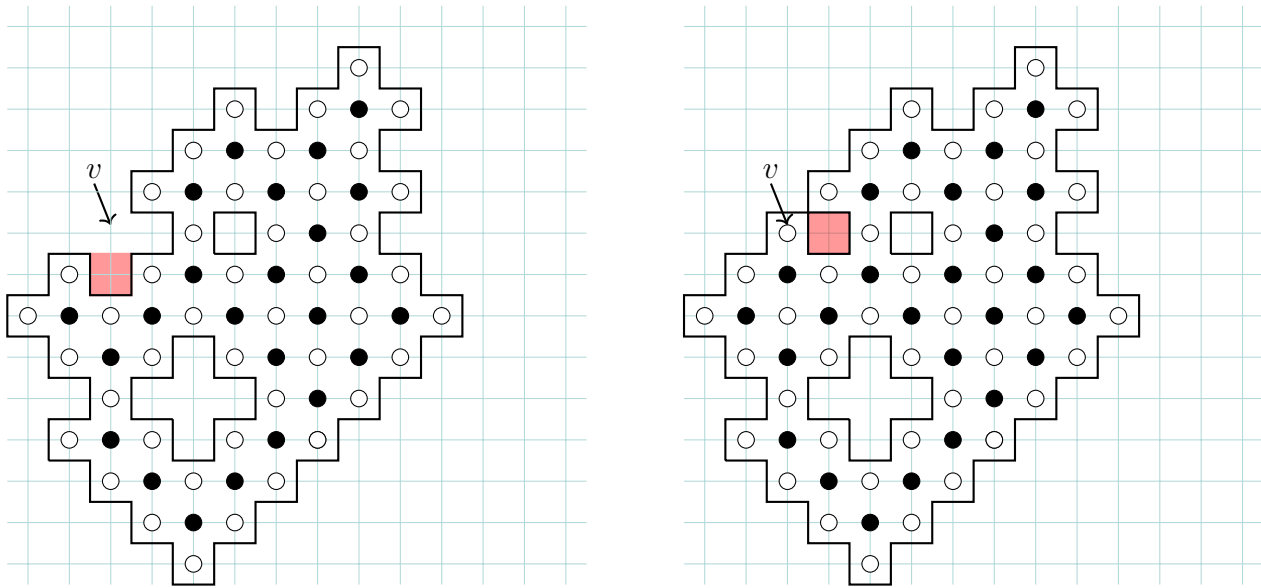
After that move, when the particle is added in the antiknob the perimeter does not change in view of \eqref{eq:identitypC}, otherwise if $n=4$ then the perimeter decreases (see~\cref{fig:ex1} on the right). This occurs also when we add a particle in the target antiknobs except the last antiknob to obtain the complete diagonal, for which by construction we have that the perimeter decreases by 4. By iterating this argument for every broken diagonal, we get 
\[
    P(\sigma)\geq P(\tilde\sigma)+4m>P(\tilde\sigma).
\]
If $C$ does not contain any broken diagonal, we argue as follows. 
By construction, we deduce that the first odd site we will fill, which is the nearest neighbor of the first shorter diagonal, has three neighboring even sites belonging to $C$. Thus, we need to remove a particle from the unique occupied neighboring even site and then, when the first particle is added in the unique possible antiknob, the perimeter does not change thanks to \eqref{eq:identitypC}. This occurs also when we iterate this argument.
\end{proof}

\begin{proof}[Proof of \cref{lem:minP}]
Given any $n$ positive number, let $C$ be an odd cluster with area $n$. If the cluster $C$ is not connected, by \eqref{eq:identitypC} it directly follows that $C$ cannot minimize the perimeter of an odd cluster with $n$ particles. Indeed, when the cluster is not connected the cardinality of $C\cap V_{\ee}$ decreases, while the cardinality of $C\cap V_{\oo}$ is fixed equal to $n$.  
Consider now a connected cluster $C$. 
First, we will show that the minimal perimeter is that of the surrounding rhombus. To this end, we consider separately the following three cases:
\begin{enumerate}
    \item there exists at least one broken diagonal. In this case, we have that $P(C)>P(\cR(C))$ by applying \cref{lem:expansion}.
    \item there is no broken diagonal but there exists at least one shorter diagonal with respect to those of the surrounding rhombus. In this case, we have that $P(C)=P(\cR(C))$ by applying \cref{lem:expansion}.
    \item all the diagonals have the same length as those of the surrounding rhombus, namely, all the diagonals are complete. In this case $C=\cR(C)$, thus it is trivial that $P(C)=P(\cR(C))$.
\end{enumerate}

Lastly, we need to show that the minimizing rhombus is either $\cR_{s-1,s}$ or $\cR_{s,s-1}$ if $n=s(s-1)+k$ and $\cR_{s,s}$ if $n=s^2+k$. We argue by induction over $n$. If $n=1$, then it is trivial that the rhombus $\cR_{1,1}$ minimizes the perimeter and $n=1$ can be represented in the form $s(s-1)+k$ choosing $s=1$ and $k=0$. Suppose now $n>1$ and that the claim holds true for any $m<n$. Suppose that $n-1$ can be written as $s(s-1)+k$. In the other case, when $n=s^2+k$, we can argue in a similar way. If $n-1=s(s-1)+k$, with $0\leq k\leq s-2$ (resp.\ $k=s-1$), then either the rhombus $\cR_{s,s-1}$ or $\cR_{s-1,s}$ (resp.\ the rhombus $\cR_{s,s}$) minimizes the perimeter for an odd cluster with $n$ particles. Indeed, in any other case, the surrounding rhombus could be either $\cR_{s+1,s-1}$ or $\cR_{s-1,s+1}$, which has a strictly greater perimeter in view of \eqref{eq:perimeter}. Note that the assumption $n\leq L(L-2)$ is needed to avoid rhombi with a maximal side equal to $L$, because in view of \eqref{eq:perimeter} we would lose the uniqueness of the minimizing configuration.
\end{proof}

\begin{proof}[Proof of \cref{cor:mortadella}]
We first consider the case in which the area $n$ is of the form $n=s(s-1)+k$. If $s<L/2$, we have that the perimeter of the odd cluster is $P=4(2s+1)$. Since the cluster is contained in a rhombus $\cR_{s,s}$, we have that $n\leq s^2$ and, hence, $\bigl(\frac{P}{4}-1\bigr)^2\geq 4n$. If $L/2\leq s<L$, then the perimeter of the odd cluster is $P=4(2L-2s-1)$. Since the cluster is contained in a rhombus $\cR_{s,s}$ and therefore the complement in $V$ is a rhombus $\cR_{L-s-1,L-s-1}$, we have that $n\geq \frac{L^2}{2}-(L-s-1)^2$. Hence 
\[
    \Bigl(\frac{P}{4}-1\Bigr)^2=4(L-s-1)^2\geq 2(L^2-2n).
\]
Consider now the other case, when the area $n$ is of the form $n=s^2+k$. If $s<L/2$ we have that the perimeter of the odd cluster is $P=4(2s+2)$. Since the cluster is contained in a rhombus either $\cR_{s+1,s}$ or $\cR_{s,s+1}$, we have that $n\leq s^2+s$. Thus, it holds
\[
    \Bigl(\frac{P}{4}-1\Bigr)^2=(2s+1)^2=4(s^2+s)+1\geq 4n+1.
\]
If $L/2\leq s<L$, then the perimeter of the odd cluster is $P=4(2L-2s-2)$. Since the cluster is contained in either a rhombus $\cR_{s+1,s}$ or $\cR_{s,s+1}$ and therefore the complement in $V$ is either a rhombus $\cR_{L-s-2,L-s-1}$ or $\cR_{L-s-1,L-s-2}$, we have that $n\geq \frac{L^2}{2}-(L-s-1)(L-s-2)$. Hence,
\[
    \Bigl(\frac{P}{4}-1\Bigr)^2=(2L-2s-3)^2=4(L-s-1)(L-s-2)+1\geq 4\Bigl(\frac{L^2}{2}-n\Bigr)+1=2(L^2-2n)+1.
\]
By using the condition $n\leq L(L-2)$, inequality~\eqref{eq:diavola} directly follows.
\end{proof}

\begin{proof}[Proof of \cref{prop:maialona}]
Let $\sigma$ be a configuration with real area $\tilde n=2\ell^2+2\ell+1$.
First, we suppose that the set of odd clusters in $\sigma$ is composed only of $j \geq 2$ non-degenerate clusters. Each of them has area $n_i$ and perimeter $p_i$ for $i=1,...,j$. Suppose by contradiction that $\sigma$ has minimal perimeter so that the area of the configuration $\s$ is $n_\sigma=\sum_{i=1}^j n_i$ and its perimeter is $p_\sigma= 4(2\sqrt{n_\sigma}+1)$. By \eqref{eq:diavola}, we have $p_i \geq 4(2\sqrt{n_i}+1)$ for any $i=1,...,j$. Then we obtain that
\begin{equation}
    p_{\sigma}=\sum_{i=1}^j p_i\geq \sum_{i=1}^j 4(2\sqrt{n_i}+1)\geq 8\sqrt{\sum_{i=1}^j n_i}+4j\geq 8\Bigg(\sqrt{\sum_{i=1}^j n_i}+1\Bigg)=4(2\sqrt{n_\sigma}+2),
\end{equation}
that is a contradiction. 

Second, we suppose that the set of odd clusters in $\sigma$ is composed of $k \geq 1$ degenerate clusters and of $j \geq 1$ non-degenerate clusters. Each of these non-degenerate clusters has area $n_i$ and perimeter $p_i$ for $i=1,...,j$, so that $n_\sigma=\sum_{i=1}^j n_i$. We denote by $\tilde p_i$ for $i=1,...,k$ the perimeter of a degenerate cluster. Suppose by contradiction that $\sigma$ has minimal perimeter. 
By \eqref{eq:diavola}, we have $p_i \geq 4(2\sqrt{n_i}+1)$ for any $i=1,...,j$ and $p_\sigma= 4(2\sqrt{n_\sigma}+1)$. Thus, we obtain
\begin{equation}
    p_{\sigma}=\sum_{i=1}^{j} p_i+\sum_{i=1}^{k} \tilde p_i \geq \sum_{i=1}^j 4(2\sqrt{n_i}+1)+4k\geq 8\sqrt{\sum_{i=1}^j n_i}+4j+4k\geq 8\sqrt{\sum_{i=1}^j n_i}+4+4=4(2\sqrt{n_\sigma}+2),
\end{equation}
that is a contradiction. Thus, we obtain that $k=0$ and $j=1$. Since the real area $\tilde n$ of the configuration $\sigma$ is fixed, then also the area $n_\sigma$ is fixed. Thus, given that $\sigma$ contains only one non-degenerate cluster with minimal perimeter and with fixed area, by \cref{cor:mortadella} we obtain that the non-degenerate cluster is the rhombus $\cR_{\ell,\ell}$, which has precisely real area $\tilde n$. 
\end{proof}

\subsection{Results on optimal reference paths}
\label{sec:proofrefpath}

\begin{proof}[Proof of \cref{lmm:cammino*}]
We start by proving (i). The desired path $\omega=(\ee,\omega_1,..., \omega_{k(L)},\eta)$ is obtained as follows. Starting from $\ee$, define the configuration $\omega_1$ as that in which one particle is removed from an empty even site, say $v_1\in V_{\ee}$. Similarly, we define $\omega_2$ as the configuration in which a particle is removed from a site $v_2\in V_{\ee}$ such that $d(v_1,v_2)=2$. Similarly, by removing particles in $v_3,v_4\in V_{\ee}$ in such a way $d(v_i,v_j)=2$ for any $i,j=1,...,4$ and $i\neq j$, we define the configurations $\omega_3$ and $\omega_4$. Note that $\Delta H(\omega_4)=4<L+1$. Thus, we can define the configuration $\omega_5$ as that obtained from $\omega_4$ by adding a particle in the unique unblocked odd site, i.e., the one at distance one from $v_i$ for any $i=1,...,4$. See~\cref{fig:ex2} on the left. 

\begin{figure}[h!]
\centering
\begin{tikzpicture}[scale=0.38]
    \newcommand\radius{0.2}
    \newcommand\listodd{(0,0)}
    \newcommand\listeven{(-1,0),(1,0),(0,1),(0,-1)}

    \draw[help lines] (-4.5,-3.5) grid (9.5,10.5);

    \draw[thick,->] (1,3.3)--(1,2.1);
    \node at (1,3.5) {$v_5$};

    \draw[thick] (-1.5,0.5)--(-1.5,-0.5)--(-0.5,-0.5)--(-0.5,-1.5)--(0.5,-1.5)--(0.5,-0.5)--(1.5,-0.5)--(1.5,0.5)--(0.5,0.5)--(0.5,1.5)--(-0.5,1.5)--(-0.5,0.5)--(-1.5,0.5);

    \foreach \c in \listodd
        \draw[fill=black] \c circle(\radius);
    \foreach \c in \listeven
        \draw[fill=white] \c circle(\radius);
    
\end{tikzpicture}
\hspace{0.5cm}
    \begin{tikzpicture}[scale=0.38]
    \newcommand\radius{0.2}
    \newcommand\listodd{(0,0)}
    \newcommand\listeven{(-1,0),(1,0),(0,1),(0,-1),(1,2),(2,1)}

    \draw[help lines] (-4.5,-3.5) grid (9.5,10.5);

    \draw[thick] (-1.5,0.5)--(-1.5,-0.5)--(-0.5,-0.5)--(-0.5,-1.5)--(0.5,-1.5)--(0.5,-0.5)--(1.5,-0.5)--(1.5,0.5)--(0.5,0.5)--(0.5,1.5)--(-0.5,1.5)--(-0.5,0.5)--(-1.5,0.5);

    \draw[thick] (0.5,1.5) rectangle (1.5,2.5);
    \draw[thick] (1.5,0.5) rectangle (2.5,1.5);
        
    \foreach \c in \listodd
        \draw[fill=black] \c circle(\radius);
    \foreach \c in \listeven
        \draw[fill=white] \c circle(\radius);
    
\end{tikzpicture}
\hspace{0.5cm}
    \begin{tikzpicture}[scale=0.38]
    \newcommand\radius{0.2}
    \newcommand\listodd{(0,0),(1,1),(0,2),(-1,1)}
    \newcommand\listeven{(-1,0),(1,0),(0,1),(0,-1),(1,2),(2,1),(-2,1),(-1,2),(0,3)}

    \draw[help lines] (-4.5,-3.5) grid (9.5,10.5);

     \draw[thick] (-1.5,0.5)--(-1.5,-0.5)--(-0.5,-0.5)--(-0.5,-1.5)--(0.5,-1.5)--(0.5,-0.5)--(1.5,-0.5)--(1.5,0.5)--(2.5,0.5)--(2.5,1.5)--(1.5,1.5)--(1.5,2.5)--(0.5,2.5)--(0.5,3.5)--(-0.5,3.5)--(-0.5,2.5)--(-1.5,2.5)--(-1.5,1.5)--(-2.5,1.5)--(-2.5,0.5)--(-1.5,0.5);

    \foreach \c in \listodd
        \draw[fill=black] \c circle(\radius);
    \foreach \c in \listeven
        \draw[fill=white] \c circle(\radius);
    
\end{tikzpicture}
\caption{Example of the configurations $\omega_5$ (on the left), $\omega_7$ (in the middle) and $\omega_{13}$ (on the right) visited by the path described in the proof of \cref{lmm:cammino*}(i).}
\label{fig:ex2}
\end{figure}
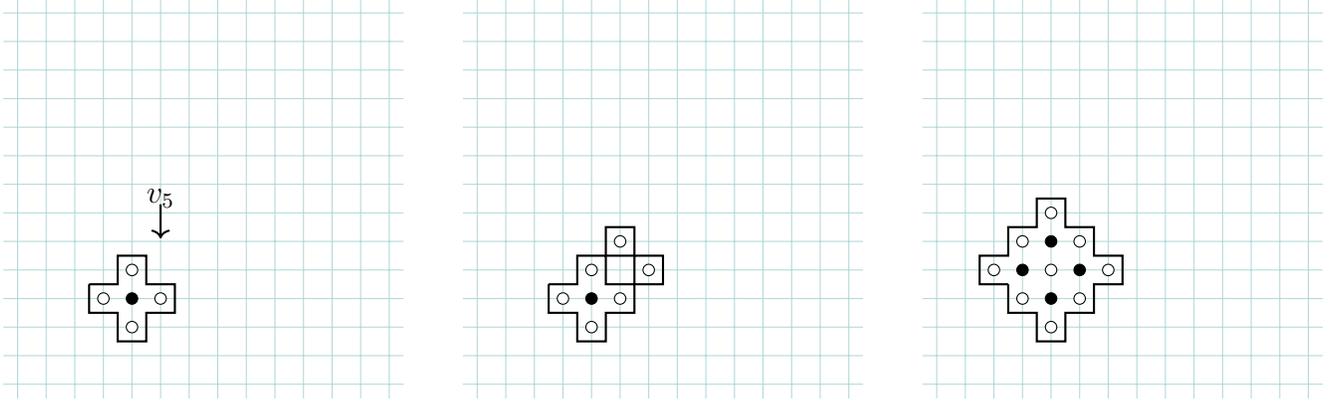

We obtain that $\Delta H(\omega_5)=3<L+1$ and $\omega_5$ is composed of a unique non-degenerate odd cluster, which is $\cR_{1,1}$. To define the configuration $\omega_6$, we remove a particle from a site $v_5\in V_{\ee}$ such that $d(v_i,v_5)=2$ for two indices $i=1,...,4$. Similarly, we define $\omega_7$ in such a way there is an empty odd site with all the neighboring even sites that are empty, see~\cref{fig:ex2} in the middle. Note that $\Delta H(\omega_7)=5<L+1$. Then, we define $\omega_8$ by adding a particle in the unique unblocked odd site, so that $\Delta H(\omega_8)=4<L+1$. Note that $\omega_8$ contains an odd cluster $\cR_{1,2}$. By growing the odd cluster in a spiral fashion, emptying the even sites that are strictly necessary, note that the configuration $\omega_{13}$ contains only the non-degenerate cluster $\cR_{2,2}$ (see~\cref{fig:ex2} on the right). Then, our path visits all the configurations which have a unique non-degenerate odd cluster $C$ such that $C=\cR_{\ell,\ell}$ and $C=\cR_{\ell,\ell+1}$ for any $3\leq \ell\leq \frac{L}{2}-1$ up to the configuration $\omega_{{k(L)}-1}$, whose unique non-degenerate odd cluster is $C=\cR_{\frac{L}{2}-1,\frac{L}{2}-1}$. Then, the configuration $\omega_{k(L)}$ is defined by removing a particle from an even site $w$ at distance two from $C$. Finally, the configuration $\eta$ is obtained by removing a particle from an even site at distance two from $C$ and from $w$. Since the procedure we defined is invariant by translation, given a fixed configuration $\eta\in\cC_{ir}(\ee,\oo)$ is possible to choose the position of the odd clusters in such a way the final configuration of the path we described coincides with the desired $\eta$. It remains to show that $\arg\max_{\xi\in\omega}H(\xi)=\{\eta\}$. To this end, since $\Delta H(\eta)=L+1$ and therefore $\Delta H(\omega_{k(L)})=L$ and $\Delta H(\omega_{{k(L)}-1})=L-1$, we need only to show that 
\[
    \max_{\xi\in\{\ee,\omega_1,...,\omega_{{k(L)}-2}\}}H(\xi)<L+1.
\]
First, we will show that $\Delta H(\h)=3+2(\ell-1)$ by induction over the dimension $\ell=1,...,\frac{L}{2}-1$ of the rhombus $\cR_{\ell,\ell}$ composing the unique odd cluster of the configurations $\eta$ visited by $\omega$. We have already proven the desired property in the case $\ell=1$. Suppose now that the claim holds for $\ell$, with $1\leq \ell\leq\frac{L}{2}-2$, thus we will prove that it holds also for $\ell+1$. To reach the configuration displaying the rhombus $\cR_{\ell,\ell+1}$ starting from $\cR_{\ell,\ell}$, we need first to remove particles from two even sites by increasing the energy by two. Then, we sequentially add a particle in an odd site and remove a particle in an even site until the length of the shorter diagonal is $\ell-1$. Finally, the last move is the addition of a particle in an odd site without the need of removing any particle from an even site. Starting from $\cR_{\ell,\ell+1}$, to obtain $\cR_{\ell+1,\ell+1}$ we follow the same sequence of moves. Thus, for a configuration $\eta$ containing as unique odd cluster a rhombus $\cR_{\ell+1,\ell+1}$ we deduce that $\Delta H(\eta)=3+2(\ell-1)+2$, which proves our claim for $\ell+1$. Along the sequence of moves from $\cR_{\ell,\ell}$ to $\cR_{\ell+1,\ell+1}$, the energy is at most $3+2(\ell-1)+3$, which is strictly less than $L+1$ for $\ell\leq \frac{L}{2}-1$. Note that we do not to consider the case $\ell=\frac{L}{2}-2$, indeed the path $\omega$ stops before reaching the rhombus $\cR_{\frac{L}{2}-1,\frac{L}{2}}$. This concludes the proof of (i).

Now we prove (ii). The desired path $\omega=(\ee,\omega_1,..., \omega_{k(L)},\eta)$ is obtained as follows. Starting from $\ee$, define the configuration $\omega_1$ as that in which one particle is removed from an empty even site, say $v_1\in V_{\ee}$. Similarly, we define $\omega_2$ as the configuration in which a particle is removed from a site $v_2\in V_{\ee}$ such that $d(v_1,v_2)=2$. Similarly, by removing particles in $v_3,v_4\in V_{\ee}$ in such a way $d(v_i,v_j)=2$ for any $i,j=1,...,4$ and $i\neq j$, we define the configurations $\omega_3$ and $\omega_4$. Note that $\Delta H(\omega_4)=4<L+1$. Thus, we can define the configuration $\omega_5$ as the one obtained from $\omega_4$ by adding a particle in the unique unblocked odd site, i.e., the one at distance one from $v_i$ for any $i=1,...,4$. We obtain that $\Delta H(\omega_5)=3<L+1$ and $\omega_5$ is composed of a unique non-degenerate odd cluster, which is $\cR_{1,1}$, see~\cref{fig:ex3} on the left. 

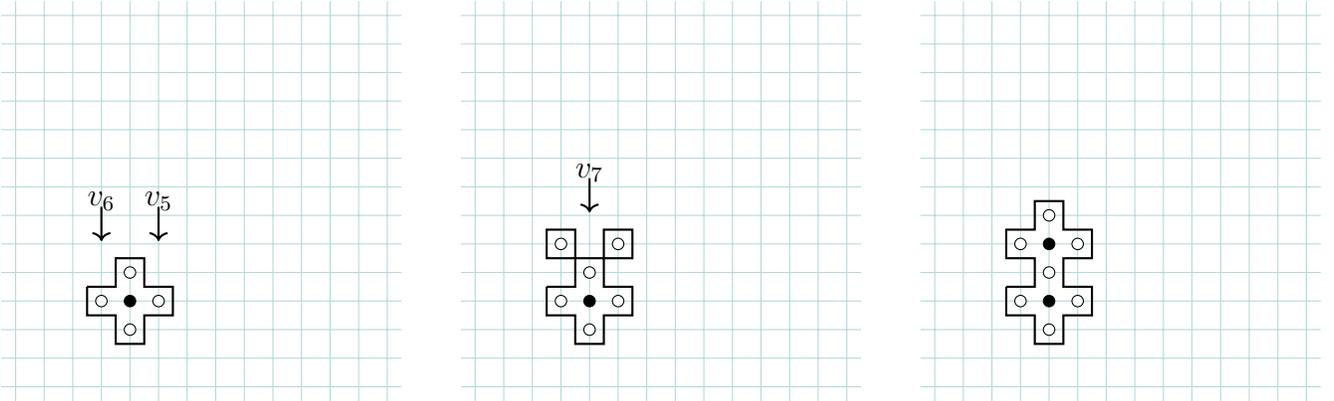
\begin{figure}[h!]
\centering
\begin{tikzpicture}[scale=0.38]
    \newcommand\radius{0.2}
    \newcommand\listodd{(0,0)}
    \newcommand\listeven{(-1,0),(1,0),(0,1),(0,-1)}

    \draw[help lines] (-4.5,-3.5) grid (9.5,10.5);

    \draw[thick,->] (1,3.3)--(1,2.1);
    \node at (1,3.5) {$v_5$};

    \draw[thick,->] (-1,3.3)--(-1,2.1);
    \node at (-1,3.5) {$v_6$};

    \draw[thick] (-1.5,0.5)--(-1.5,-0.5)--(-0.5,-0.5)--(-0.5,-1.5)--(0.5,-1.5)--(0.5,-0.5)--(1.5,-0.5)--(1.5,0.5)--(0.5,0.5)--(0.5,1.5)--(-0.5,1.5)--(-0.5,0.5)--(-1.5,0.5);

    \foreach \c in \listodd
        \draw[fill=black] \c circle(\radius);
    \foreach \c in \listeven
        \draw[fill=white] \c circle(\radius);
    
\end{tikzpicture}
\hspace{0.5cm}
    \begin{tikzpicture}[scale=0.38]
    \newcommand\radius{0.2}
    \newcommand\listodd{(0,0)}
    \newcommand\listeven{(-1,0),(1,0),(0,1),(0,-1),(1,2),(-1,2)}

    \draw[help lines] (-4.5,-3.5) grid (9.5,10.5);

    \draw[thick] (-1.5,0.5)--(-1.5,-0.5)--(-0.5,-0.5)--(-0.5,-1.5)--(0.5,-1.5)--(0.5,-0.5)--(1.5,-0.5)--(1.5,0.5)--(0.5,0.5)--(0.5,1.5)--(-0.5,1.5)--(-0.5,0.5)--(-1.5,0.5);

    \draw[thick] (0.5,1.5) rectangle (1.5,2.5);
    \draw[thick] (-1.5,1.5) rectangle (-0.5,2.5);
        
    \foreach \c in \listodd
        \draw[fill=black] \c circle(\radius);
    \foreach \c in \listeven
        \draw[fill=white] \c circle(\radius);

        \draw[thick,->] (0,4.3)--(0,3.1);
    \node at (0,4.5) {$v_7$};
    
\end{tikzpicture}
\hspace{0.5cm}
    \begin{tikzpicture}[scale=0.38]
    \newcommand\radius{0.2}
     \newcommand\listodd{(0,0),(0,2)}
    \newcommand\listeven{(-1,0),(1,0),(0,1),(0,-1),(1,2),(-1,2),(0,3)}

    \draw[help lines] (-4.5,-3.5) grid (9.5,10.5);

    \draw[thick] (-1.5,0.5)--(-1.5,-0.5)--(-0.5,-0.5)--(-0.5,-1.5)--(0.5,-1.5)--(0.5,-0.5)--(1.5,-0.5)--(1.5,0.5)--(0.5,0.5)--(0.5,1.5)--(1.5,1.5)--(1.5,2.5)--(0.5,2.5)--(0.5,3.5)--(-0.5,3.5)--(-0.5,2.5)--(-1.5,2.5)--(-1.5,1.5)--(-0.5,1.5)--(-0.5,0.5)--(-1.5,0.5);

    \foreach \c in \listodd
        \draw[fill=black] \c circle(\radius);
    \foreach \c in \listeven
        \draw[fill=white] \c circle(\radius);
    
\end{tikzpicture}
\caption{Example of the configurations $\omega_5$ (on the left), $\omega_7$ (in the middle) and $\omega_{9}$ (on the right) visited by the path described in the proof of \cref{lmm:cammino*}(ii).}
\label{fig:ex3}
\end{figure}

Next, we describe four steps that are used in the following iteration. The first step is to define the configuration $\omega_6$ by removing a particle from a site $v_5\in V_{\ee}$ such that $d(v_i,v_5)=2$ for two indices $i=1,...,4$. The second step is to remove a particle from a site $v_6\in V_{\ee}$ in such a way there is an empty odd site with two neighboring empty even sites and the other one is occupied. In this way, we obtain the configuration $\omega_7$ (see~\cref{fig:ex3} in the middle). The third step is to obtain the configuration $\omega_8$ by removing the particle in the site $v_7\in V_{\ee}$ such that $d(v_7,v_5)=d(v_7,v_6)=2$. Note that $\Delta H(\omega_8)=6<L+1$. Then, the last step is to define $\omega_9$ by adding a particle in the unique unblocked odd site, so that $\Delta H(\omega_9)=5<L+1$ and the energy cost of these four steps is $2$. Note that $\omega_9$ is composed of a non-degenerate odd cluster with two odd particles along either the same column or the same row, see~\cref{fig:ex3} on the right. 

From this point onwards, we iterate these four steps for other $\frac{L}{2}-3$ times, until we obtain the configuration $\omega_{k(L)-1}$ in which either a column or a row contains $\frac{L}{2}-1$ odd particles. Note that $\Delta H(\omega_{k(L)-1})=5+2(\frac{L}{2}-3)=L-1$. Then, we repeat the first two steps described above and we reach the configuration $\eta\in\cC_{sb}(\ee,\oo)$ with $\Delta H(\eta)=L+1$. It easy to check that $\arg\max_{\xi\in\omega}=\{\eta\}$. Indeed, we get
\[
    \begin{array}{lll}
    \Delta H (\omega_{i})&=\Delta H (\omega_{i-1})+1 \qquad &\text{ if } 1\leq i\leq 4, \\
    \Delta H (\omega_{2i-1})&=\Delta H (\omega_{2i})-1 \qquad &\text{ if } 3\leq i \leq \frac{k(L)}{2}, \\
    \Delta H (\omega_{2i})&=\Delta H (\omega_{2i-1})+3 \qquad &\text{ if } 3\leq i \leq \frac{k(L)}{2}-1, \\
    \Delta H (\omega_{k(L)})&=\Delta H (\omega_{k(L)-1})+2,
    \end{array}
\]
which concludes the proof of (ii).
\end{proof}

\begin{proof}[Proof of \cref{lmm:cammino**}]
We start by proving (i). Take the path described in \cref{lmm:cammino*}(ii) until it visits for the first time a configuration in $\cC_{sb}(\ee,\oo)$ as in~\cref{fig:degpos} on the left. Starting from such a configuration, add a particle in the unique unblocked odd site. Then, remove a particle from an even site at distance one from an antiknob and finally add a particle in the unique unblocked odd site. Afterward, iterate the sequence of these two moves up to the target configuration $\eta\in\cC_{ib}(\ee,\oo)$. By construction, the resulting path has the desired property.

Let us now focus on case (ii). Consider the path described in \cref{lmm:cammino*}(i) until it visits for the first time a configuration in $\cC_{ir}(\ee,\oo)$. Starting from such configuration, add a particle in the antiknob, obtaining a configuration $\eta'$ that displays a unique non-degenerate odd cluster, which is a rhombus $\cR_{\frac{L}{2}-1,\frac{L}{2}-1}$ with a single protuberance. Starting from $\eta'$, consider the path returned filling algorithm $\bar\omega$ up to the configuration $\eta\in\cC_{gr}(\ee,\oo)$. The described path has the desired property thanks to \cref{lem:expansion}. 

Consider now the concatenation of the paths described in \cref{lmm:cammino*}(ii) and \cref{lmm:cammino**}(i) until it visits for the first time a configuration in $\cC_{ib}(\ee,\oo)$. Starting from such a configuration, by iterating the pair of moves consisting in adding a particle in the unique unblocked odd site and removing a particle at distance one from an antiknob, it is possible to obtain a configuration with a unique non-degenerate cluster that contains $\cR_{\frac{L}{2}-1,\frac{L}{2}-1}$. By construction, this resulting configuration is in $\cC_{gr}(\ee,\oo)$ and it is possible to iterate this couple of moves up to $\eta$. By construction, the resulting path has the desired property.

Consider now case (iii). Take the path described in \cref{lmm:cammino*}(ii) until it visits for the first time the configuration in $\cC_{sb}(\ee,\oo)$ as in~\cref{fig:selle2} on the right. Starting from such a configuration, add a particle in the unique unblocked odd site. Then, remove a particle from an even site at distance one from an antiknob and finally add a particle in the unique unblocked odd site. This configuration is in $\cC_{mb}(\ee,\oo)$. Afterward, iterate the sequence of these two moves up to the target configuration $\eta$. By construction, the resulting path has the desired property. 

Consider now the concatenation of the paths described in \cref{lmm:cammino*}(ii) and \cref{lmm:cammino**}(i) until it visits for the first time a configuration in $\cC_{ib}(\ee,\oo)$. Starting from such configuration, with the same procedure described above it is possible to reach the target configuration $\eta\in\cC_{mb}(\ee,\oo)$ by visiting only saddles in $\cC_{ib}(\ee,\oo)\cup\cC_{mb}(\ee,\oo)$.

Let us now focus on case (iv). Consider the concatenation of the paths described in \cref{lmm:cammino*}(i) and \cref{lmm:cammino**}(ii) until it visits for the first time a configuration in $\cC_{gr}(\ee,\oo)$. Starting from such configuration, add a particle in the antiknob, obtaining a configuration $\eta'$. Note that $\eta'$ is composed of a unique non-degenerate odd cluster, which is a rhombus $\cR_{\frac{L}{2}-1,\frac{L}{2}-1}$ with a single protuberance. Starting from $\eta'$, consider the path returned by filling algorithm $\bar\omega$ up to the configuration $\eta\in\cC_{cr}(\ee,\oo)$. The described path has the desired property thanks to \cref{lem:expansion}. 

Consider now the concatenation of the paths described in \cref{lmm:cammino*}(ii) and \cref{lmm:cammino**}(iii) until it visits for the first time a configuration in $\cC_{mb}(\ee,\oo)$. Starting from such configuration, by iterating the pair of moves consisting in adding a particle in the unique unblocked odd site and removing a particle at distance one from an antiknob, it is possible to obtain a configuration with a unique non-degenerate cluster that contains $\cR_{\frac{L}{2}-1,\frac{L}{2}}$. By construction, this resulting configuration is in $\cC_{cr}(\ee,\oo)$ and it is possible to iterate this couple of moves up to $\eta$. By construction, the resulting path has the desired property.
\end{proof}

\begin{proof}[Proof of \cref{lmm:cammino6}]
We start by proving (i). Arguing as in the proof of \cref{lmm:cammino**}(ii) we can show that there exists a path with the desired property that connects $\eta$ to $\bar\eta\in\cC_{cr}(\ee,\oo)$, where the unique non-degenerate cluster of $\bar\eta$ is $\cR_{\frac{L}{2}-1,\frac{L}{2}}$ with attached a bar of length $\frac{L}{2}-2$ and there is a degenerate rhombus $\cR_{0,0}$ at distance one from an antiknob. Since the configuration displays two antiknobs, it is possible to sequentially add two particles in odd sites. Thus, by proceeding in this way the path reaches $\oo$ without visiting any other saddle and so the described path has the desired property.

Finally, consider case (ii). By arguing as in the proof of \cref{lmm:cammino**}(iii)--(iv) we can show that there exists a path with the desired property that connects $\eta$ to $\bar\eta\in\cC_{gr}(\ee,\oo)\cup\cC_{mb}(\ee,\oo)$. If $\bar\eta\in\cC_{gr}(\ee,\oo)$, the claim follows by the previous argument. Otherwise, the configuration $\bar\eta$ has two antiknobs after arguing as above. In either cases, the described path has the desired property.
\end{proof}

\begin{proof}[Proof of \cref{lmm:nocammino}]

Consider first case (i). By construction, every non-backtracking optimal path from $\ee$ to $\oo$ that crosses a configuration in $\cC_{ir}(\ee,\oo)$ has to visit a configuration in $\cC_{gr}(\ee,\oo)$. Indeed, each move consists in adding a particle in an unblocked odd site or removing a particle from an even site. Thus, it remains to consider backtracking optimal paths only. In order to visit a saddle in $\cC_{ib}(\ee,\oo)$, a column (or row) with precisely $\frac{L}{2}-1$ particles arranged in odd sites needs to be created along these paths. To proceed, since we are considering backtracking optimal paths, the unique possibility is to visit a configuration in $\cC_{sb}(\ee,\oo)$ before reaching $\cC_{ib}(\ee,\oo)$. Thus, case (i) is concluded.

Consider now case (ii). The claim follows after noting that every configuration in $\cC_{cr}(\ee,\oo)$ contains one odd vertical (resp.\ horizontal) bridge $B$, where the two neighboring columns (resp.\ rows) to $B$ contains $\frac{L}{2}-1$ odd particles each. Thus, all the configurations in $\cC_{cr}(\ee,\oo)$ differ from a configuration in $\cC_{ib}(\ee,\oo)$ in at least two odd sites. Then, suppose to have $\eta\in\cC_{ib}(\ee,\oo)$ and $\eta'\in\cC_{cr}(\ee,\oo)$ such that they differ in only two odd sites. Starting from $\eta$, if a bridge is created, then the resulting configuration is in $\cC_{mb}(\ee,\oo)$, while if the resulting configuration contains two neighboring columns (or rows) with exactly $\frac{L}{2}-1$ particles in odd sites, then it belongs to $\cC_{gr}(\ee,\oo)$. If the two configurations differ in at least three sites, we argue as above.

Consider now case (iii). The claim follows after arguing as in case (i) and noting that, in order to create a bridge, the path has to visit the set $\cC_{sb}(\ee,\oo)$. Lastly, in case (iv), the claim follows after arguing as in case (iii).
\end{proof}

\subsection{Results on the saddles lying in the manifold $\cV_{m^*}$}
\label{sec:proofmanifold}

\begin{proof}[Proof of \cref{lmm:gorgonzola}]
We analyze separately the three cases.

\medskip
\noindent
{\bf Case (i).}
Suppose by contradiction that there exists a non-backtracking path $\omega'\in(\ee\rightarrow\oo)_{\mathrm{opt}}$ that crosses $\sigma\in\cV_{m^*}\setminus\cC_{ir}(\ee,\oo)$ such that $R(\cO^{nd}(\s))$ and $R(\cO(\sigma))$ do not wind around the torus.
First, since $\sigma\in\cV_{m^*}$ and $\Delta H(\s)\leq L+1$, we get
\begin{equation}\label{burratina}
\begin{cases}
    e(\sigma)&\geq\frac{L^2}{4}-\frac{3}{2}, \\
    o(\sigma)&\geq\frac{L^2}{4}-L+\frac{3}{2}.
\end{cases}
\end{equation}
Since $o(\sigma)>0$ for any $L$, we deduce that $\s$ cannot contain only degenerate clusters and therefore it contains at least an odd non-degenerate cluster.
Thus, one of the following cases occurs:
\begin{itemize}
    \item[(1)] $\cO(\sigma)$ consists of at least two non-degenerate odd clusters;
    \item[(2)] $\cO(\sigma)$ consists of a single non-degenerate odd cluster different from $\cR_{\frac{L}{2}-1,\frac{L}{2}-1}$ and possibly some degenerate clusters;
    \item[(3)] $\cO(\sigma)$ consists of a single non-degenerate odd rhombus equal to $\cR_{\frac{L}{2}-1,\frac{L}{2}-1}$ and at least one degenerate odd cluster $\cR_{0,0}$ at distance greater than one from the non-degenerate one.
\end{itemize}

We consider the rhombus surrounding the odd non-degenerate region for all the above cases. Due to the isoperimetric inequality of \cref{prop:maialona}, this rhombus $\cR$ has a perimeter $P(\mathcal{R})$ greater than or equal to $P(\cR_{\frac{L}{2}-1,\frac{L}{2}-1})$, i.e., $P(\mathcal{R}) \geq 4(L-1)$. In particular, for cases (1) and (2) we have $P(\mathcal{R}) >4(L-1)$ since $\cO(\s)\neq \cR_{\frac{L}{2}-1,\frac{L}{2}-1}$, and for case (3) we have $P(\mathcal{R})=4(L-1)$. 
Let $P_i$ denotes the perimeter of the $i$-th cluster, with $i=1,...,k$. Note that $P(\s)=\sum_{i=1}^k P_i$. We let $k_d \geq 0$ (resp.\ $k_{nd} \geq 1$) the number of degenerate clusters $\cR_{0,0}$ (resp.\ non-degenerate clusters) such that the total number of clusters is $k=k_{d}+k_{nd} \geq 1$.

\medskip
\noindent
{\bf Subcase (1).} In this case $k_{nd}\geq2$, so that $k \geq 2$. Denote by $\tilde e_i$ (resp.\ $o_i$) the number of empty even sites (resp.\ occupied odd sites) of the $i$-th cluster. Using~\eqref{eq:identitypC}, \eqref{eq:ps} and \eqref{eq:contour}, we obtain
\begin{equation}\label{eq:salsiccina}
    \begin{array}{ll}
    \Delta H(\sigma)&=\displaystyle \sum_{i=1}^{k} (\tilde e_i-o_i)=\sum_{i=1}^{k_d} \tilde e_i+\sum_{i=1}^{k_{nd}} (\tilde e_i-o_i) \geq \tilde e_{d}+\Big(\sum_{i=1}^{k_{nd}}\tilde e_i+2(k_{nd}-1)-\sum_{i=1}^{k_{nd}} o_i\Big)\\
    &=\tilde e_{d}+\Big(\displaystyle\sum_{i=1}^{k_{nd}}\tilde e_i-\sum_{i=1}^{k_{nd}} o_i\Big)+2(k_{nd}-1)> \tilde e_{d}+(L-1)+2(k_{nd}-1)\\
    &\geq \tilde e_{d}+L+1,
    \end{array}
\end{equation}
where $\tilde e_{d}=\sum_{i=1}^{k_{d}}\tilde e_i$, the first inequality follows from the fact that the difference between the perimeter of two disjoint non-degenerate clusters and that of the cluster obtained by attaching the two is at least two, and the last inequality follows from the isoperimetric inequality applied to the cluster obtained by attaching the $k$ clusters forming $\sigma$. Then, from inequalities \eqref{eq:salsiccina} and $\Delta H(\sigma)\leq L+1$, it follows that $\tilde e_d<0$, which is a contradiction since $\tilde e_d \geq 0$. 

\medskip
\noindent
{\bf Subcase (2).} The unique non-degenerate odd cluster of $\sigma$ has a shape different from a rhombus $\cR_{\frac{L}{2}-1,\frac{L}{2}-1}$ by assumption. 
In this case $k_{nd}=1$ and $k_d\geq0$, so that $k=k_d+1$. Thus, we obtain 
\begin{equation}\label{eq:salsiccina2} 
\begin{array}{ll}
    \Delta H(\sigma)&=\displaystyle \sum_{i=1}^{k_d+1} (\tilde e_i-o_i)=\sum_{i=1}^{k_d} \tilde e_i+(\tilde e_{k_d+1}-o_{k_d+1})> \tilde e_{d}+L-1,
\end{array}
\end{equation}
where $\tilde e_{d}=\sum_{i=1}^{k_{d}}\tilde e_i$, the first equality follows from the fact that $\sigma$ contains only one non-degenerate odd cluster, and the last inequality follows from the isoperimetric inequality applied to the non-degenerate cluster. Then by \eqref{eq:salsiccina2} and the fact that $\Delta H(\sigma)\leq L+1$, we find $\tilde e_d \leq 1$ and so $k_d \leq 1$. 

The configuration $\sigma$ thus falls in one of the following three subcases: 
\begin{itemize}
    \item[\textbf{(2a)}] the only non-degenerate odd cluster of $\sigma$ is a rhombus $\cR_{\ell_1,\ell_2} \neq \cR_{\frac{L}{2}-1,\frac{L}{2}-1}$;
    \item[\textbf{(2b)}] the non-degenerate odd cluster of $\sigma$ is a cluster with $m \geq 1$ empty odd sites corresponding to some broken diagonal and $q=0$ shorter diagonals;
    \item[\textbf{(2c)}] the non-degenerate odd cluster of $\sigma$ is a cluster with $m \geq 0$ empty odd sites corresponding to some broken diagonal and $q \geq 1$ shorter diagonals.
\end{itemize}
We can ignore the case $m=0$ and $q=0$ as it is the one corresponding to the rhombus $\cR_{\frac{L}{2}-1,\frac{L}{2}-1}$.

\medskip
{\bf Subcase (2a).} This case is not admissible since, if $R_{\ell_1,\ell_2} \neq R_{\frac{L}{2}-1,\frac{L}{2}-1}$, we have
\begin{align}
    \begin{cases}
        \ell_1+\ell_2+1+k_d&=L+1, \notag \\
        \ell_1+\ell_2+1 &> L-1,
    \end{cases}
\end{align}
and this implies $k_d>2$, which is in contradiction with the assumption $k_d \leq 1$.

\medskip
{\bf Subcase (2b).} From the assumptions on $\sigma$, it follows that
\begin{equation}\label{eq:paninozzo}
    \begin{cases}
        \ell_1+\ell_2+1+k_d+m&=L+1, \\
        \ell_1+\ell_2+1 &> L-1.
    \end{cases}
\end{equation}
These give that $k_d+m< 2$, which, in view of the inequalities $k_d \leq 1$ and $m \geq 1$, then imply that $k_d=0$ and $m=1$. Thus, we deduce that
\begin{align}
    \begin{cases}
        o(\s)&=\ell_1\ell_2-1, \notag \\
        e(\s)&=(\ell_1+1)(\ell_2+1),
    \end{cases}
\end{align}
so that $\ell_1+\ell_2=L-5$ since $\s\in\cV_{m^*}$, but this contradicts the condition $\ell_1+\ell_2+1+k_d+m=L+1$ in \eqref{eq:paninozzo}. 

\medskip
{\bf Subcase (2c).} From the assumptions on $\sigma$, it follows that
\begin{equation}\label{eq:paninozzo2}
    \begin{cases}
        \ell_1+\ell_2+1+k_d+m&=L+1, \\
        2(\ell_1+\ell_2+1)-q & > 2(L-1).
    \end{cases}
\end{equation}
This implies that $2(k_d+m)+q\leq 3$, so we distinguish three subcases: \textbf{(2c-I)} $k_d=m=0$ and $q \in \{1,2,3\}$; \textbf{(2c-II)} $k_d=1$, $m=0$ and $q=1$; and \textbf{(2c-III)} $k_d=0$, $m=1$ and $q=1$. The other subcases are not possible in view of the conditions $k_d \leq 1$, $m \geq 0$, and $q \geq 1$.\\

For subcase (2c-I), $k_d=m=0$ and $q \in \{1,2,3\}$.  
Thus, by letting $s$ be the total number of empty sites needed to be filled in order for the shorter diagonals to become complete, we deduce that 
\begin{align}
    \begin{cases}
        o(\s)&=\ell_1\ell_2-s, \notag \\
        e(\s)&=(\ell_1+1)(\ell_2+1)-s,
    \end{cases}
\end{align}
so that $\ell_1+\ell_2=L-4$ since $\s\in\cV_{m^*}$, but this contradicts identity $\ell_1+\ell_2+1+k_d+m=L+1$ in \eqref{eq:paninozzo2}. 
The claims for (2c-II) and (2c-III) follow by arguing as in (2c-I).

\medskip
{\bf Subcase (3).} First, note that $k_d\leq2$, otherwise $\Delta H(\s)>L+1$ and therefore the path $\o$ would be not optimal. 

If $k_d=2$, by using the non-backtracking property of the path $\o$, it follows that there is no possible move to cross the next manifold towards $\oo$ along an optimal path.

If $k_d=1$, then $\D H(\s)=L$ so that the energy along the path can increase by at most 1 to reach $\oo$. The unique possible move is to remove a particle from an empty site, obtaining a configuration with $k_d=2$. We can then prove the claim by arguing as in the case $k_d=2$.

\medskip
\noindent
{\bf Case (ii).} 
Suppose by contradiction that there exists a non-backtracking path $\omega'\in(\ee\rightarrow\oo)_{\mathrm{opt}}$ that crosses that crosses $\sigma\in\cV_{m^*}\setminus\cC_{ib}(\ee,\oo)$ such that $R(\cO^{nd}(\s))$ does not wind around the torus and $R(\cO(\sigma))$ does.
We observe that \eqref{burratina} holds for the configuration $\sigma$ and, therefore, it contains at least one odd non-degenerate cluster and it cannot contain only degenerate clusters. Thus, we consider the following subcases:
\begin{itemize}
    \item[(1)] $\cO(\s)$ consists of at least two non-degenerate odd clusters;
    \item[(2)] $\cO(\s)$ consists of a single non-degenerate odd cluster different from $\cR_{\frac{L}{2}-1,\frac{L}{2}-1}$ and possibly some degenerate clusters;
    \item[(3)] $\cO(\s)$ consists of a single non-degenerate odd rhombus equal to $\cR_{\frac{L}{2}-1,\frac{L}{2}-1}$ and at least one degenerate odd cluster $\cR_{0,0}$ at distance greater than one from the non-degenerate one.
\end{itemize}

\medskip
{\bf Subcase (1).} Suppose that $\sigma$ contains $k \geq 2$ non-degenerate clusters $C_1(\sigma),...,C_k(\sigma)$. By assumption all the rhombi surrounding $C_i(\s)$ does not wind around the torus for any $i=1,...,k$, thus we can argue as in case \textbf{(i-2c-II)} above. 

\medskip
{\bf Subcase (2).} By arguing as in case \textbf{(i-2)} above, we deduce that this case is possible only when $\cR(\cO^{nd}(\s))$ does not wind around the torus and $\cR(\cO(\s))$ does, so that from \eqref{burratina} we deduce that there exists only one column or row with less than $L/2$ particles, but this contradicts the fact that $\s\notin\cC_{ib}(\ee,\oo)$. 

\medskip
{\bf Subcase (3).} By arguing as in case \textbf{(i-3)} above, we deduce that this case is not possible.

\medskip
\noindent
{\bf Case (iii).}
Suppose by contradiction that there exists a non-backtracking path $\omega'\in(\ee\rightarrow\oo)_{\mathrm{opt}}$ that crosses that crosses $\sigma \in\cV_{m^*}\setminus\cC_{mb}(\ee,\oo)$ such that both $R(\cO^{nd}(\s))$ and $R(\cO(\sigma))$ wind around the torus. 
We observe that \eqref{burratina} holds for the configuration $\sigma$ and therefore it contains at least one odd non-degenerate cluster and it cannot contain only degenerate clusters. Thus, we consider the following subcases:
\begin{itemize}
    \item[(1)] $\cO(\s)$ consists of at least two non-degenerate odd clusters;
    \item[(2)] $\cO(\s)$ consists of a single non-degenerate odd cluster different from $\cR_{\frac{L}{2}-1,\frac{L}{2}-1}$ and possibly some degenerate clusters;
    \item[(3)] $\cO(\s)$ consists of a single non-degenerate odd rhombus equal to $\cR_{\frac{L}{2}-1,\frac{L}{2}-1}$ and at least one degenerate odd cluster $\cR_{0,0}$ at distance greater than one from the non-degenerate one.
\end{itemize}

\medskip
{\bf Subcase (1).} Suppose that $\sigma$ contains $k \geq 2$ non-degenerate clusters $C_1(\sigma),...,C_k(\sigma)$. If the rhombus surrounding $C_i(\s)$ does not wind around the torus for any $i=1,...,k$, we can argue as in case \textbf{(i-1)} above. Otherwise, suppose that there exists an index $i$ such that $\cR(C_i(\s))$ winds around the torus.
Thus, there exists at least one row or column that contains $\frac{L}{2}$ particles, say a column. Since $k\geq2$, this implies that 
there exists a row containing two odd particles which belong to $C_i(\s)$ and another disjoint cluster. Thus, the energy difference contribution $\D H$ along this row or column is at least two. In addition, all the other $L$ rows or columns composing $C_i(\sigma)$ have an energy contribution of at least one. Thus, the total contribution is $\Delta H(\sigma)> 2+L-1=L+1$, where the strict inequality follows from $k\geq2$. This contradicts the assumption $\Delta H(\sigma)\leq L+1$ and therefore this case is not admissible.

\medskip
{\bf Subcase (2).} By arguing as in case \textbf{(i-2)} above, we deduce that this case is possible only when both $\cR(\cO^{nd}(\s)$ and $\cR(\cO(\s)$ wind around the torus, so that from \eqref{burratina} we deduce that there exists at least one column or row with strictly less than $L/2$ particles, but this contradicts the fact that $\s\notin\cC_{mb}(\ee,\oo)$. 

\medskip
{\bf Subcase (3).} By arguing as in case \textbf{(i-3)} above, we deduce that this case is not possible.
\end{proof}

\section{Conclusions and future work}
\label{sec:future}
This work concludes the analysis of the metastable behavior for the hard-core model on a square grid graph with periodic boundary conditions initiated by \cite{NZB15}. In that paper, this interacting particle system was shown to have two stable states and the energy barrier between them was already identified. However, the argument carried out in that paper did not provide any geometrical insight into the typical trajectories in the low-temperature regime and did not characterize the critical configurations. The goal of this paper was precisely to fill this gap. More precisely, we provide the geometrical description of all the essential saddles for this transition and we highlight how these communicate with each other without exceeding the critical energy barrier.

The extension to other types of lattices naturally arises in this context. Indeed, in \cite{Zocca2018bis} the authors investigate the same model on the triangular lattice studying the asymptotic behavior of the transition times between stable states but without providing any information on the critical configurations. The type of analysis carried out in this paper could be useful to tackle that problem, even if it looks more challenging since there are three stable states and isoperimetric inequalities are probably harder to derive. This will be the focus of future work. Another possible direction could be the study of the metastable behavior of the hard-core model on square grid graphs but with different boundary conditions or in the presence of some impurities. However, in this case, we expect the transition between two stable states to most likely occur by heterogeneous nucleation starting from the boundary, hence breaking the intrinsic symmetry and translation-invariance properties of the critical configurations. On the other hand, we expect that the techniques and the machinery developed in this paper to be useful to identify critical configurations also of other interacting particle systems on finite square lattices with similar blocking effects, e.g., the Widom-Rowlinson model \cite{Zocca2018}.

\begin{appendices}
\section{}
\label{sec:app}
\begin{proof}[Proof of \cref{lem:diff}]
We start by proving (i). Given $0\leq k_1^*\leq \ell_1$ and $\ell_2+1\leq j_1^*\leq L-1$, we want to show that there exists $0\leq k_2^*\leq \ell_1$ and $0\leq j_2^*\leq \ell_2$ such that 
\begin{equation}\label{eq:sistcong}
\begin{cases}
    k_1^*+j_1^*=k_2^*+j_2^* &\, (\hbox{mod } L), \\
    k_1^*-j_1^*=k_2^*-j_2^* &\, (\hbox{mod } L).
    \end{cases}
\end{equation}
The choice $j_2^*=j_1^*+L/2 \ \, (\hbox{mod } L)$ and $k_2^*=k_1^*+L/2 \ \, (\hbox{mod } L)$ implies the claim in (i), since the chosen indices $j_2^*$ and $k_2^*$ satisfy \eqref{eq:sistcong} and they are modulo $L$ such that
\[
    \begin{cases}
    j_2^*\geq \ell_2+1+\frac{L}{2} 
    \geq1, \\
    j_2^*\leq L-1+\frac{L}{2} 
    \leq \ell_2-1, \\
    k_2^*\geq \frac{L}{2}, \\ 
    k_2^*\leq \ell_1+\frac{L}{2}  
    \leq\frac{L}{2}-2\leq \ell_1-2.
    \end{cases}
\]
By interchanging the role of $k_1^*$ and $k_2^*$ and arguing in the same way, the proof of (i) is concluded.
The two inclusions stated in (ii) can be proved by arguing in an analogous way.
\end{proof}

\medskip
\begin{proof}[Proof of \cref{lem:complrombo}]
In this proof, all the sums will be tacitly assumed to be taken modulo $L$. Denote $\eta=(\eta_1,\eta_2) \in V_\oo$.  We analyze separately each case. \medskip

\noindent
{\bf Case (i).} Considering $S_{\ell_1,\ell_2}(\eta) \subset V_\oo$ and $\partial^+ S_{\ell_1,\ell_2}(\eta) \subset V_\ee$, we observe that
\begin{align}
    V \setminus \cR_{\ell_1,\ell_2}(\eta) & = V \setminus \{S_{\ell_1,\ell_2}(\eta) \cup \partial ^+ S_{\ell_1,\ell_2}(\eta)\} \notag \\
    & = \{ V_\oo \cup V_\ee \} \setminus \{S_{\ell_1,\ell_2}(\eta) \cup \partial^+S_{\ell_1,\ell_2}(\eta)\} \notag \\
    & = \{V_\oo \setminus \{S_{\ell_1,\ell_2}(\eta) \cup \partial^+S_{\ell_1,\ell_2}(\eta)\}\} \cup \{ V_\ee \setminus \{S_{\ell_1,\ell_2}(\eta) \cup \partial^+S_{\ell_1,\ell_2}(\eta)\} \} \notag \\
    & = \{V_\oo \setminus S_{\ell_1,\ell_2}(\eta) \} \cup \{ V_\ee \setminus \partial^+S_{\ell_1,\ell_2}(\eta)\}.
\end{align}
Thus, we want to show that the two subsets are equal to $S_{\hat{l}_1,\hat{l}_2}(\hat{\eta})$ and $\partial^+S_{\hat{l}_1,\hat{l}_2}(\hat{\eta})$ for some $0 \leq \hat{l}_1,\hat{l}_2 \leq L-1$ and $\hat{\eta} \in V_\ee$. In addition, we may write 
\begin{equation}\label{eq:sitipari}
    V_\ee = \bigcup_{0 \leq k', \, j' \leq L-1 } \{(k'+j',k'-j')\}
\end{equation}
and
\begin{equation}\label{eq:extS}
    \partial^+ S_{\ell_1,\ell_2}(\eta)= \bigcup_{\substack{ 0 \leq k \leq \ell_1 \\ 0 \leq j \leq \ell_2} } \{ (\eta_1+k+j-1, \, \eta_2+k-j) \}.
\end{equation}
Thus, we have
\begin{align}
    V_\ee \setminus \partial^+S_{\ell_1,\ell_2}(\eta) & =\bigcup_{ 0 \leq k',j' \leq L-1 } \{(k'+j'+\eta_1-1,k'-j'+\eta_2)\} \setminus \bigcup_{\substack{ 0 \leq k \leq \ell_1 \\ 0 \leq j \leq \ell_2} } \{ (\eta_1+k+j-1, \, \eta_2+k-j) \} \notag\\
    &= \bigcup_{\substack{ \ell_1+1 \leq \tilde k \leq L-1 \\ \ell_2+1 \leq \tilde j \leq L-1} } \{(\tilde k+ \tilde j + \eta_1-1, \tilde k - \tilde j + \eta_2) \} \notag \\
    &= \bigcup_{\substack{ 0 \leq \hat{k} \leq L-\ell_1-2 \\ 0 \leq \hat{j} \leq L-\ell_2-2} } \{(\hat{k}+\hat{j}+\ell_1+\ell_2+2+\eta_1-1, \hat{k}-\hat{j}+\ell_1-\ell_2+\eta_2\},
\end{align}
where at the second equality we used \cref{lem:diff}(i) and at the last equality we used the change of variables $\hat{k}=\tilde k -(\ell_1+1)$ and $\hat{j}= \tilde j -(\ell_2+1)$. Now, we consider $\hat{\eta}=(\ell_1+\ell_2+1+\eta_1,\ell_1-\ell_2+\eta_2) \in V_\ee$ and we obtain 
\begin{equation}
 V_\ee \setminus \partial^+S_{\ell_1,\ell_2}(\eta)= \bigcup_{\substack{ 0 \leq \hat{k} \leq L-\ell_1-2 \\ 0 \leq \hat{j} \leq L-\ell_2-2} } \{(\hat{k}+\hat{j}+\hat{\eta}_1, \hat{k}-\hat{j}+\hat{\eta}_2) \}
 \end{equation}
 Thus, we have $V_\ee \setminus \partial^+ S_{\ell_1,\ell_2}(\eta)= S_{L-\ell_1-1,L-\ell_2-1}(\hat{\eta})\subseteq V_\ee$. By arguing as above, we prove that $V_\oo \setminus S_{\ell_1,\ell_2}(\eta)= \partial^+ S_{L-\ell_1-1,L-\ell_2-1}(\hat{\eta})\subseteq V_\oo$.

\medskip
\noindent
{\bf Case (ii).} Without loss of generality we may assume $\ell_1=\min\{\ell_1,\ell_2\}$. For $\eta=(\eta_1,\eta_2)\in V_\ee$, by using \eqref{eq:sitipari} and \eqref{eq:stima1}, we have 

\begin{equation}
\begin{array}{ll}
  V_\ee&= \displaystyle\bigcup_{\substack{0\leq k \leq \ell_1 \\ 0\leq j \leq L-1}} \{(k+j+\eta_1,k-j+\eta_2)\} \cup \displaystyle\bigcup_{\substack{\ell_1+1\leq k \leq L-1 \\ 0\leq j \leq L-1}} \{(k+j+\eta_1,k-j+\eta_2)\} \\
 &=\displaystyle\bigcup_{\substack{0\leq k \leq \ell_1 \\ 0\leq j \leq L-1}} \{(k+j+\eta_1,k-j+\eta_2)\} = \partial^+ S_{\ell_1,L-1}(\eta).
\end{array}
\end{equation}
Thus, it follows that all the even sites belong to the rhombus $\cR_{\ell_1,\ell_2}(\eta)$. In addition, by using \eqref{eq:stima2} we obtain
\begin{align}
S_{\ell_1,L-1}(\eta) &=\bigcup_{ \substack{0 \leq k\leq \ell_1-1 \\ 0\leq j \leq L-2} } \{(\eta_1+k'+j',\eta_2+k'-j')\} \notag \\
&=\bigcup_{ \substack{0 \leq k\leq \frac{L}{2}-1 \\ 0\leq j \leq L-2} } \{(\eta_1+k+j,\eta_2+k-j)\} \cup \bigcup_{\substack{\frac{L}{2} \leq k\leq \ell_1-1 \\ 0\leq j \leq L-2}} \{(\hat\eta_1+k+j,\hat\eta_2+k-j)\} \notag \\
&=\bigcup_{ \substack{0 \leq k\leq \frac{L}{2}-1 \\ 0\leq j \leq L-2} } \{(\eta_1+k+j,\eta_2+k-j)\} \cup \bigcup_{L/2 \leq k\leq \ell_1-1} \{(\hat\eta_1+k,\hat\eta_2+k)\}.
\end{align}
Thus, we deduce that the rhombus $\cR_{\ell_1,L-1}(\eta)$ contains $L^2/2-L+\ell_1$ odd sites, which implies that its complement in $V$ contains $L-\ell_1$ odd sites.

\medskip
\noindent
{\bf Case (iii).}  Without loss of generality we may assume $\ell_1=\min\{\ell_1,\ell_2\}$. In this case, after using the same argument we have shown above, for some $\hat\eta=(\hat\eta_1,\hat\eta_2)\in V_\oo$ we obtain that
\begin{equation}
V_\oo\setminus S_{\ell_1,L}(\eta)=\bigcup_{\substack{\ell_1\leq k\leq \frac{L}{2}-1 \\ 0\leq j\leq L-1}} \{(\hat\eta_1+k+j,\hat\eta_2+k-j)\}
\end{equation}
and
\begin{equation}
V_\ee\setminus \partial^+ S_{\ell_1,L}(\eta)=\bigcup_{\substack{\ell_1+1\leq k\leq \frac{L}{2}-1 \\ 0\leq j\leq L-1}} \{(\hat\eta_1+k+j-1,\hat\eta_2+k-j)\}.
\end{equation}
This implies that the rhombus $\cR_{\ell_1,L}(\eta)$ contains $L\, \ell_1$ odd sites and $L(\ell_1+1)$ even sites.

\medskip
\noindent
{\bf Case (iv).} By arguing as above, we can show that the complement of the rhombus $\cR_{\ell_1,\ell_2}(\eta)$ in $V$ has no even and odd sites and therefore $\cR_{\ell_1,\ell_2}\equiv V$.
\end{proof}
\end{appendices}

\printbibliography

\end{document}